\documentclass[a4paper,11pt]{article}
\usepackage[utf8]{inputenc}
\usepackage{bm}             

\usepackage{enumerate}
\usepackage{amsmath}
\usepackage{amssymb}
\usepackage{cite}

\textwidth15cm
\oddsidemargin0.4cm

\sloppy

\newtheorem{Thm}{Theorem}[section]

\newtheorem{Lem}[Thm]{Lemma}
\newtheorem{Cor}[Thm]{Corollary}
\newtheorem{Prop}[Thm]{Proposition}

\newtheorem{Rem}[Thm]{Remark}

\numberwithin{equation}{section}

\newcommand{\Lra}{\Longrightarrow}
\newcommand{\R}{\mathbb{R}}

\newcommand{\cH}{{\mathcal H}}

\newcommand{\cN}{{\mathcal N}}

\newcommand{\weak}{\rightharpoonup}

\newcommand{\eps}{\varepsilon}

\newcommand{\sOmega}{{\mbox{\tiny $\Omega$}}}   
\renewcommand{\phi}{\varphi}

\newenvironment{proof}   
{\noindent  
{\em Proof}.}   
{\nopagebreak\mbox{}\hfill $\Box$\par\addvspace{0.5cm}}   
\newenvironment{altproof}[1]   
{\noindent  
{\em Proof of {#1}}.}   
{\nopagebreak\mbox{}\hfill $\Box$\par\addvspace{0.5cm}}

\DeclareMathOperator{\innt}{int}

\newcommand{\Rb}{{\mathbb{R}}}

\renewcommand{\epsilon}{\varepsilon}

\newcommand{\Lp}{\Lambda_p}
\newcommand{\Lpp}{\Lambda_p^\prime}
\newcommand{\lp}{\lambda_p}

\newcommand{\p}{p}
\newcommand{\q}{q}
\newcommand{\hza}{H_{za}}
\newcommand{\has}{H_{as}}
\renewcommand{\u}{{u}}   
\newcommand{\ux}{u_{x_i}}  
\newcommand{\uxn}{u_{x_N}} 
\newcommand{\up}{{u_p}}
\newcommand{\vp}{v_p}
\newcommand{\wpp}{w_p}
\newcommand{\wt}{\tilde{w}_p}
\newcommand{\uh}{u_{\mbox{\tiny $H$}}} 
\newcommand{\sigmah}{\sigma_{\mbox{\tiny $H$}}}
\newcommand{\sigmaho}{\sigma_{\mbox{\tiny $H_0$}}}
\newcommand{\A}{{\cal A}}
\newcommand{\opm}{(\Omega_{u_p})_{-}}

\newcommand{\upxn}{(u_p)_{x_N}}

\begin{document}
\title{The shape of extremal functions\\ 
for Poincaré-Sobolev-type
inequalities in a ball}
\author{Pedro Girão\\
\small{Mathematics Department, IST, Av. Rovisco Pais, 1049-001 Lisboa 
(Portugal)\thanks{Partially supported by FCT/POCTI/FEDER.}
}
\and 
Tobias Weth\\
\small{Math. Institut, Universit\"at Giessen, Arndtstr. 2, 
35392 Giessen (Germany)}
}

\date{}
\maketitle

\begin{abstract}
\noindent We study extremal functions for a family of 
Poincaré-Sobolev-type inequalities. These functions minimize, for subcritical or critical $p\geq 2$, the quotient ${\|\nabla u\|_2}/{\|u\|_p}$ among all $u \in H^1(B)\setminus\{0\}$ with $\int_{B}u=0$. Here $B$ is the unit ball in $\R^N$. We show that the minimizers are axially symmetric with respect to 
a line passing through the origin. We also show that they are strictly monotone in the direction of this line. In particular, they take their maximum and minimum precisely at two antipodal points on the boundary of $B$. We 
also prove that, {\em for $p$ close to $2$}, minimizers are 
antisymmetric with respect to the hyperplane through the origin perpendicular to 
the symmetry axis, and that, once the symmetry axis is fixed,
they are unique (up to multiplication by a constant).
In space dimension two, we prove that minimizers are not antisymmetric for large $p$.
\end{abstract}

\section{Introduction and main results}
\label{sec:introduction}
Let $\Omega \subset \R^N$ be bounded domain with smooth 
boundary. Moreover, let $q \ge 1$; let $1 \le \p \le \frac{qN}{N-\q}$ if 
$N> \q$, $1 \le \p<\infty$ if $N=q$, $1 \le \p\leq\infty$ if $N<q$. We consider the family of 
Poincar\'e-Sobolev-type inequalities 
\begin{equation}
\label{eq:11}
\biggl(\int_\Omega |u-u_\sOmega|^{\p}\biggr)^{\frac{\q}{\p}} \le 
C(\p,\q,\Omega) \int_\Omega |\nabla u|^\q \qquad \forall u \in 
W^{1,q}(\Omega),
\end{equation}
where $u_\sOmega = \frac{1}{|\Omega|}\int_\Omega u$ is the average of $u$ on $\Omega$. This family of inequalities can be derived by combining Poincar\'e's inequalities with Sobolev embeddings, see e.g. \cite[Section 3.6]{giusti-new}. But this derivation neither yields optimal constants $C(\p,\q,\Omega)$, nor it answers the question whether equality
can be achieved and, if so, how 
extremal functions look like for 
particular domains $\Omega$. These questions, which are of interest both from an analytical and a geometrical point of view, have been addressed in a number of papers, but answers have only been obtained  
in special cases so far. In the `linear' case 
$p=q=2$, the best constant $C(2,2,\Omega)$ is just the inverse 
of the second 
eigenvalue 
$\lambda_2(\Omega)$ of the Neumann Laplacian on the domain 
$\Omega$, and for $u \not=0$ equality holds in (\ref{eq:11}) if and only if $u$ is a corresponding eigenfunction. For some domains, $\lambda_2(\Omega)$ and its eigenspace can be computed in terms of special functions. A general upper estimate for 
$\lambda_2(\Omega)$ is given by an isoperimetric inequality due to 
Szeg\"o \cite{szegoe} for $N=2$ and Weinberger \cite{Weinberger} for $N \ge 3$. This inequality states that, among all domains of fixed volume, $\lambda_2(\Omega)$ is 
maximal for the ball. 
For convex domains, a lower estimate for $\lambda_2(\Omega)$ is given in \cite{payne.weinberger} in terms of the diameter of $\Omega$, and in the 
two-dimensional case the location of the nodal line is studied in \cite{jerison}. The case $q=1$ also received much attention. In this case, the best constant in 
(\ref{eq:11}) is attained in the space of functions of 
bounded variation,  
and the extremal functions directly reflect 
geometric properties of the domain $\Omega$, see \cite{maz'ya,Z3}.
 
The present paper is motivated by the rather complete 
description obtained recently for the {\em one-dimensional case}, i.e., for $\Omega= (-1,1) \subset \R$. In this case, building upon previous work of Dacorogna-Gangbo-Subía \cite{DGS}, Egorov \cite{egorov-alt},
Buslaev-Kontratiev-Nazarov~\cite{bus-kon-naz},
Belloni-Kawohl \cite{belloni-kawohl} and 
Kawohl \cite{kawohl1},  Nazarov \cite{nazarov-new} completed the proof of the following result.

\begin{Thm} {\rm (see \cite{nazarov-new})}
\label{sec:introduction-4}
Let $\Omega = (-1,1)$, and let $p, q \in (1,\infty)$. Then the best constant $C(\p,\q,\Omega)$ in {\rm (\ref{eq:11})} is attained, and the corresponding extremal 
functions are either strictly increasing or strictly decreasing on $(-1,1)$. Moreover, for $\p \le 3\q$, 
the best constant is attained by an odd function $u_{\p,\q}$, and every other extremal function with 
$\int_{-1}^1 u=0$ is a scalar multiple of $u_{\p,\q}$. For $\p>3\q$, the extremal functions are not odd.   
\end{Thm}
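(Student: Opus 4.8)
The plan is to combine the direct method with a phase-plane analysis of the Euler--Lagrange equation, and, for the symmetry statements, a reparametrisation by the inverse function.

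\emph{Existence and the Euler--Lagrange equation.} It suffices to study the equivalent minimisation problem $m:=\inf\bigl\{\int_{-1}^1\abs{u'}^q:\ \int_{-1}^1u=0,\ \int_{-1}^1\abs{u}^p=1\bigr\}$, a minimiser then being, up to a constant, an extremal function in {\rm(\ref{eq:11})}. Since $q>1$, the embedding $W^{1,q}(-1,1)\embed C[-1,1]$ is compact, so a minimising sequence converges along a subsequence uniformly and weakly in $W^{1,q}$, and by weak lower semicontinuity of $u\mapsto\int_{-1}^1\abs{u'}^q$ the limit $u$ is a minimiser. The functionals are $C^1$ on $W^{1,q}$ and the constraints are nondegenerate, so for suitable $\lambda>0$, $\mu\in\R$,
\begin{equation*}
-\bigl(\abs{u'}^{q-2}u'\bigr)'=\lambda\abs{u}^{p-2}u+\mu\quad\text{in }(-1,1),\qquad u'(\pm1)=0 .
\end{equation*}
Bootstrapping gives $u\in C^1[-1,1]$ with $\abs{u'}^{q-2}u'\in C^1$; testing with $u$ identifies $\lambda=m$, and integrating the equation gives $2\mu=-\lambda\int_{-1}^1\abs{u}^{p-2}u$.

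\emph{Monotonicity.} The equation is autonomous and conservative: with $v:=\abs{u'}^{q-2}u'$ one has $v'=-\lambda\abs{u}^{p-2}u-\mu$, and $H(u,v)=\tfrac{q-1}{q}\abs{v}^{q/(q-1)}+F(u)$ with $F(u)=\tfrac{\lambda}{p}\abs{u}^p+\mu u$ is constant along the orbit. Here $H$ is strictly convex and $F$ has a unique minimiser $u_0$; since $u$ is nonconstant the energy level $c$ strictly exceeds $\min H$, so the orbit is a closed convex curve meeting $\{v=0\}$ in exactly two points $a<u_0<b$, with $a<0<b$ because $\int_{-1}^1u=0$. The Neumann conditions force the trajectory on $(-1,1)$ to start and end on $\{v=0\}$, hence to run over an integer number $k\ge1$ of half-periods. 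If $k\ge2$, take one monotone half-period, which occupies an interval of length $2/k$, and rescale it affinely onto $(-1,1)$: by the reflection symmetry $(u,v,x)\mapsto(u,-v,-x)$ of the system all half-periods carry the same values of $\int\abs{u}^p$, of $\int u$ (so the rescaled function still has mean zero), and of $\int\abs{u'}^q$, while the affine rescaling multiplies $\bigl(\int\abs{u'}^q\bigr)^{1/q}$ by $1/k<1$ and leaves $\bigl(\int\abs{u}^p\bigr)^{1/p}$ fixed -- contradicting minimality. Hence $k=1$ and $u$ is monotone; on a single half-period $u'$ vanishes only at $x=\pm1$, and $u$ cannot be locally constant (that would pin the energy to $\min H$, forcing $u\equiv u_0$), so $u$ is strictly increasing or strictly decreasing.

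\emph{Reduction for the symmetry statements.} Let $u$ be an increasing extremal, $u(-1)=a<0<b=u(1)$, and pass to the inverse function $x=x(u)$, $\psi:=x'>0$, which satisfies $\int_a^b\psi=2$. Then
\begin{equation*}
\int_{-1}^1\abs{u'}^q=\int_a^b\psi^{1-q}\,du,\qquad \int_{-1}^1\abs{u}^p=\int_a^b\abs{u}^p\psi\,du,\qquad \int_{-1}^1u=\int_a^bu\,\psi\,du ,
\end{equation*}
so the problem becomes: minimise $\bigl(\int_a^b\psi^{1-q}\,du\bigr)^{1/q}/\bigl(\int_a^b\abs{u}^p\psi\,du\bigr)^{1/p}$ over $a<0<b$ and $\psi>0$ with $\int_a^b\psi=2$ and $\int_a^bu\,\psi=0$. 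Its Euler--Lagrange equation reads $(q-1)\psi^{-q}=g(u)$ with $g(u)=-\alpha-\beta u-\gamma\abs{u}^p$ concave, and the transversality/turning-point conditions $\psi(a^+)=\psi(b^-)=+\infty$ say that $g$ vanishes exactly at $a,b$ and is positive in between. An \emph{odd} extremal corresponds precisely to $g$ even, i.e. $\beta=0$; such a configuration always solves the system, the constraint $\int_a^b u\psi=0$ then holding by symmetry.

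\emph{The threshold $p=3q$.} It remains to show that for $p\le3q$ the even-$g$ solution is the \emph{only} solution of this reduced system up to the scaling $u\mapsto cu$: a minimiser exists and is a smooth solution, hence it is this odd profile, and the extremal set is exactly its scalar multiples -- which yields both the antisymmetry of $u_{p,q}$ and the uniqueness claim. For $p>3q$ one must instead exhibit, near the even-$g$ solution, an admissible variation along which the quotient strictly decreases, so that the (unique) monotone odd critical point is not a minimiser and no extremal is odd; inverting $x=x(u)$ returns all statements to the $u$-picture. The number $3q$ is precisely the value at which the second variation of the reduced functional at the even-$g$ solution acquires a nontrivial kernel, equivalently at which the rigidity of that solution breaks down. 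I expect this rigidity/bifurcation analysis -- uniqueness of the symmetric solution for $p\le3q$ and its instability for $p>3q$, with the sharp constant $3q$ -- to be the main obstacle.
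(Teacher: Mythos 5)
Your proposal splits into two very different halves. The existence part (direct method, compact embedding of $W^{1,q}(-1,1)$ into $C[-1,1]$) and the monotonicity part (first integral $H(u,v)$, orbit meeting $\{v=0\}$ at exactly two turning points, integer number of half-periods, and the affine rescaling of one half-period which multiplies $\bigl(\int|u'|^q\bigr)^{1/q}$ by $1/k$ while keeping $\bigl(\int|u|^p\bigr)^{1/p}$ and the zero-mean constraint) are essentially sound as a sketch, and they are in the spirit of what the paper itself says about this result: the paper does not prove Theorem~\ref{sec:introduction-4} at all, but quotes it from \cite{nazarov-new}, remarking only that the proof uses ODE techniques and a first integral for the Euler equation.

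The genuine gap is that the distinctive content of the theorem --- the sharp threshold $3q$ --- is not proved. For $p\le 3q$ you must show that the extremum is attained by an odd function and that \emph{every} zero-mean extremal is a scalar multiple of it; for $p>3q$ you must show no extremal is odd. After the inverse-function reformulation you write that ``it remains to show'' uniqueness of the even-$g$ solution for $p\le 3q$ and to ``exhibit an admissible variation'' for $p>3q$, and you acknowledge this rigidity/bifurcation analysis as ``the main obstacle.'' That is a restatement of the theorem in the reduced variables, not a proof: identifying $3q$ as the exact value where the symmetric solution ceases to minimize (and proving uniqueness up to scaling below that value) is precisely the hard part settled through the sequence of papers \cite{DGS}, \cite{egorov-alt}, \cite{bus-kon-naz}, \cite{belloni-kawohl}, \cite{kawohl1} and finally \cite{nazarov-new}. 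In addition, even granting that for $p>3q$ the odd critical point is not a minimizer, you would still need an argument that any odd extremal must coincide (up to scaling) with that particular critical point --- e.g.\ uniqueness of monotone odd solutions of the Euler equation --- before you can conclude that ``the extremal functions are not odd''; this step is not addressed. As it stands, the proposal establishes the first sentence of the theorem but not the second and third, which are its core.
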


The proof of this theorem is based on ordinary differential equation techniques. A crucial fact which is used is the 
existence of a first integral for the corresponding Euler equation.  
In this paper we study the case of multidimensional domains 
$\Omega \subset \R^N,\: N \ge 2$, which requires a 
new approach. We focus on the case 
$q=2$ and $2\le p \le 2^*$ for $N \ge 3$, $2 \le p < \infty$ for $N=1,2$, where $2^*=2N/(N-2)$ is the critical Sobolev exponent. For $p>2$, not much seems to be known about extremal functions even on simple domains. The only result we are 
aware of is concerned with a rectangle in $\R^2$, see \cite{nazarov-2d}. Let $\|u\|_p$ denote the usual $L^p$-norm of a function $u \in L^p(\Omega)$, and let $\hza(\Omega)$ denote the space of all functions $u \in H^1(\Omega)$ with $\int_\Omega u = 0$, endowed with the norm 
$\|\nabla u\|_2$. Then the best 
constant $C(p,2,\Omega)$ in (\ref{eq:11}) is just 
the square of the norm of the embedding $\hza(\Omega) \hookrightarrow L^p(\Omega)$, and it is the inverse of the number 
\begin{equation}
\label{eq:12}
\Lp(\Omega)=\inf_{u\in \hza(\Omega) \setminus \{0\}}\frac{\|\nabla u\|^2_2}{\|u\|_p^2}.
\end{equation}
For subcritical $p<2^*$, one may use the compactness of the embedding $H^1(\Omega) \hookrightarrow L^p(\Omega)$ to show that the minimum in (\ref{eq:12}) is attained. We first extend this statement to the critical case $N \ge 3$, $p= 2^*$ where compactness fails for the embedding $H^1(\Omega) \hookrightarrow L^p(\Omega)$. We let, as usual, $S$ stand for the best Sobolev constant, i.e.,
\begin{equation}
\label{eq:9}
S=\inf_{u \in C_0^\infty(\R^N)\setminus\{0\}}\frac{\|\nabla u\|_2^2}{\|u\|_{2^*}^2}.
\end{equation}

\begin{Prop}
\label{sec:exist-minimz-crit-1}
Let $N \ge 3$, $p=2^*= \frac{2N}{N-2}$, and let $\Omega \subset \R^N$ be a smooth bounded domain. Then $\Lp(\Omega) < 
\frac{S}{2^{2/N}}$, and the minimum in {\rm (\ref{eq:12})}
is achieved.   
\end{Prop}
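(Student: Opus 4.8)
\medskip
\noindent\textbf{Proof plan.}

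The plan is to prove the strict inequality $\Lp(\Omega)<S/2^{2/N}$ by a boundary concentration argument, and then to use it to exclude loss of compactness for a minimizing sequence of (\ref{eq:12}). The value $S/2^{2/N}$ is precisely the sharp Sobolev constant of a half-space $\R^N_+$ for functions with no boundary condition on $\partial\R^N_+$: even reflection across $\partial\R^N_+$ doubles both $\|\nabla\cdot\|_2^2$ and $\|\cdot\|_{2^*}^{2^*}$, so (\ref{eq:9}) yields $\|\nabla w\|_{L^2(\R^N_+)}^2\ge(S/2^{2/N})\|w\|_{L^{2^*}(\R^N_+)}^2$, with equality exactly for the restrictions to $\R^N_+$ of the Aubin--Talenti instantons centered on $\partial\R^N_+$. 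Accordingly, I would fix a point $x_0\in\partial\Omega$ at which $\partial\Omega$ has \emph{positive mean curvature} --- such a point always exists, for instance a point of $\partial\Omega$ farthest from a fixed interior point, where the second fundamental form dominates that of the circumscribed sphere --- and take as competitor $u_\eps=v_\eps-|\Omega|^{-1}\int_\Omega v_\eps$, where $v_\eps=\eta\,U_\eps$, $U_\eps(x)=\eps^{-(N-2)/2}U((x-x_0)/\eps)$ is a rescaled instanton centered at $x_0$, $\eta$ a fixed cut-off supported near $x_0$, all restricted to $\Omega$. Since $\int_\Omega v_\eps=O(\eps^{(N-2)/2})\to 0$, we have $\int_\Omega u_\eps=0$, and replacing $v_\eps$ by $u_\eps$ perturbs $\|u_\eps\|_p^p$ only by $O(\eps^{N-2})$ while leaving $\|\nabla u_\eps\|_2^2$ unchanged. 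The crux is the asymptotics of $\|\nabla u_\eps\|_2^2/\|u_\eps\|_p^2$: at the concentration scale $\Omega$ looks like a half-space, so only ``half a bubble'' is captured and the leading value is $S/2^{2/N}$; the curved boundary then removes a thin sliver of the bubble adjacent to $\partial\Omega$, and a computation with the explicit profile $U$ shows that, because $p=2^*$ and the mean curvature at $x_0$ is positive, the relative loss in $\|\nabla u_\eps\|_2^2$ strictly exceeds the relative loss in $\|u_\eps\|_p^p$. This produces a negative correction of order $\eps$ (of order $\eps\log\tfrac1\eps$ if $N=3$), so that $\Lp(\Omega)\le\|\nabla u_\eps\|_2^2/\|u_\eps\|_p^2<S/2^{2/N}$ for small $\eps$.

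For the attainment I would use concentration-compactness. Take a minimizing sequence $(u_n)$ with $\|u_n\|_p=1$, $\int_\Omega u_n=0$, $\|\nabla u_n\|_2^2\to\Lp(\Omega)$; along a subsequence $u_n\weak u$ in $H^1(\Omega)$, $u_n\to u$ in $L^2(\Omega)$ and a.e., so $\int_\Omega u=0$, and the Brezis--Lieb lemma gives $\|\nabla u_n\|_2^2=\|\nabla u\|_2^2+\|\nabla(u_n-u)\|_2^2+o(1)$ and $1=\|u\|_p^p+\|u_n-u\|_p^p+o(1)$. The concentration-compactness lemma for bounded domains shows that the weak-$*$ limit of $|u_n-u|^{2^*}$ is a finite sum $\sum_j\nu_j\delta_{x_j}$ and that $\lim\|\nabla(u_n-u)\|_2^2\ge\sum_j\mu_j$ with $\mu_j\ge S\,\nu_j^{2/2^*}$ if $x_j\in\Omega$ and $\mu_j\ge(S/2^{2/N})\,\nu_j^{2/2^*}$ if $x_j\in\partial\Omega$. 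Setting $a=\|u\|_p^p\in[0,1]$ and combining $\|\nabla u\|_2^2\ge\Lp(\Omega)\|u\|_p^2$ (trivially true if $u=0$) with superadditivity of $t\mapsto t^{2/2^*}$, one arrives at
\begin{equation}
\label{eq:plan-cc}
\Lp(\Omega)\ \ge\ \Lp(\Omega)\,a^{2/2^*}+\frac{S}{2^{2/N}}\,(1-a)^{2/2^*}.
\end{equation}
Since $\Lp(\Omega)<S/2^{2/N}$ and $t^{2/2^*}+(1-t)^{2/2^*}>1$ for $t\in(0,1)$, (\ref{eq:plan-cc}) forces $a\in\{0,1\}$, and $a=0$ would give $\Lp(\Omega)\ge S/2^{2/N}$, which is excluded. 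Hence $a=1$, i.e. $u_n\to u$ in $L^p(\Omega)$; then $u\ne0$, $\|u\|_p=1$, $\int_\Omega u=0$, and $\|\nabla u\|_2^2\le\liminf\|\nabla u_n\|_2^2=\Lp(\Omega)$, which by the definition of $\Lp(\Omega)$ forces equality, so $u$ is a minimizer.

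The main obstacle I anticipate is the test-function estimate behind the strict inequality: pinning down the leading term $S/2^{2/N}$, extracting the curvature correction with the correct sign, and absorbing the remaining errors coming from the cut-off and from the mean-subtraction. The case $N=3$ needs separate treatment, because there the curvature contribution enters at order $\eps\log\tfrac1\eps$ and the hierarchy of error terms is more delicate. A minor but necessary technical step is to formulate and apply the boundary version of the concentration-compactness lemma with the sharp constant $S/2^{2/N}$ at boundary atoms.
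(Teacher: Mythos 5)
Your plan is correct and, for the strict inequality $\Lambda_p(\Omega)<S/2^{2/N}$, it is essentially the paper's argument: an Aubin--Talenti bubble centered at a boundary point of positive mean curvature (such a point exists exactly as you say), with the mean subtracted afterwards, and with the observation that the average is only $O(\eps^{(N-2)/2})$ so the correction to $\|u_\eps\|_p^p$ is $O(\eps^{N-2})$, negligible against the curvature gain of order $\eps$ (order $\eps|\log\eps|$ for $N=3$); the paper does not redo the expansion but quotes the Adimurthi--Mancini estimates, which is the computation you flag as the main obstacle. For attainment your route differs: you invoke the Lions concentration--compactness lemma with boundary atoms carrying the sharp constant $S/2^{2/N}$, and your dichotomy argument (the inequality $\Lambda_p(\Omega)\ge\Lambda_p(\Omega)a^{2/2^*}+\tfrac{S}{2^{2/N}}(1-a)^{2/2^*}$ forcing $a=1$) is sound. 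The paper instead avoids the CC machinery: it applies Brezis--Lieb together with the definition of $\Lambda_p(\Omega)$ to conclude that either $u=0$ or $u_n\to u$ in $L^p$, and then rules out $u=0$ directly with Cherrier's inequality $\bigl(\tfrac{S}{2^{2/N}}-\eps\bigr)\|u\|_p^2\le\|\nabla u\|_2^2+M_\eps\|u\|_2^2$ and the strict level bound. The two attainment arguments rest on the same analytic fact (the sharp local constant $S/2^{2/N}$ near $\partial\Omega$): the boundary-atom version of the CC lemma that you call a ``minor but necessary technical step'' is itself usually derived from Cherrier's inequality, so the paper's route is the more economical one, while yours is more systematic and would generalize to situations where the limit problem is less explicit. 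Just be aware that, as written, your attainment proof is complete only modulo a citation or proof of that boundary CC lemma, and your strict-inequality step only modulo the test-function expansion (with the separate $N=3$ bookkeeping), both of which have standard references.
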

In the proof of this observation, estimates for critical exponent Neumann problems due to Adimurthi-Mancini \cite{adimurthi.mancini} and Wang \cite{wang} play a crucial role. The main goal of this paper is to analyze the shape of minimizing functions, aiming for similar results as obtained in Theorem~\ref{sec:introduction-4} for the case of an interval. We note that every 
normalized minimizer $\u \in \hza(\Omega) \setminus \{0\}$, $\|\nabla u\|_2=1$, of (\ref{eq:12}) is a sign changing weak solution of 
the problem
\begin{equation}
\label{eq:37}
-\Delta\u=\lp\,|\u|^{p-2}\u+\mu_p\quad {\rm in}\ \Omega,\qquad
\textstyle\frac{\partial\u}
{\partial\nu}=0\quad {\rm on}\ \partial \Omega.
\end{equation}
Here $\lp= [\Lp(\Omega)]^{p/2}$, and $\mu_p$ is given by
\begin{equation}
\label{eq:8}
\mu_p=\mu_p(u)=-\frac{\lp}{|\Omega|}\int_\Omega|\u|^{p-2}\u.
\end{equation}
By elliptic regularity theory, $\u\in C^{3,\alpha}(\overline{\Omega})$ for some $0<\alpha<1$.
We focus on the case where the domain is the open unit ball $B \subset \Rb^N$, but we also discuss the case of an annulus and some extensions to nonradial domains, see Theorems \ref{sec:antisymmetry-p-close-2}, \ref{sec:antisymm-break-large-4} and Section~\ref{sec:annulus}.
Our main results are collected in the following theorem.

\begin{Thm}
\label{sec:introduction-1}
Let $2\leq p \le 2^*$ for $N \ge 3$, $2\leq p< \infty$ for $N=2$, and let $\u$ be a minimizer for {\rm (\ref{eq:12})} on the unit ball $B \subset \R^N$. Then there exists a unit vector $e \in \R^N$ such that
\begin{itemize}
\item[{\rm (a)}] $\u(x)$ only depends on $r=|x|$ and $\theta:= 
\arccos\bigl(\frac{x}{|x|}\cdot e\bigr)$. Hence $\u$ is axially symmetric with respect to the axis passing through $0$ and $e$.
\item[{\rm (b)}] $\frac{\partial \u}{\partial \theta}(r,\theta)<0$ 
for $0<r \le 1,\:0<\theta <\pi$. 
\item[{\rm (c)}] $\partial_e \u > 0$ on $\overline{B}\setminus \{\pm e\}$.
If $\tau$ is another unit vector in $\Rb^N$ orthogonal to $e$, then $\partial_\tau u$ has precisely four nodal domains.
Here $\partial_e$ and $\partial_\tau$ denote the directional derivatives in the direction of $e$ and $\tau$, respectively.
\item[{\rm (d)}] If $p$ is close to $2$, then $\u$ is antisymmetric with respect to the reflection $x \mapsto x- 2 (x \cdot e)e$ 
at the hyperplane $H_e:=\{x \in \R^N:\: x \cdot e = 0\}$.
Furthermore, if $p$ is close to $2$, then every other minimizer of {\rm (\ref{eq:12})} whose axis of symmetry has direction
$e$ is a scalar multiple of $u$.
\item[{\rm (e)}] In the two dimensional case $N=2$, the function $\u$ is not antisymmetric when $p$ is sufficiently large.   
\end{itemize}
\end{Thm}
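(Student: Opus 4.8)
The plan is to argue by contradiction: suppose that for a sequence $p_k \to \infty$ there exist antisymmetric minimizers $u_k$ of (\ref{eq:12}) on the disk $B \subset \R^2$, normalized by $\|\nabla u_k\|_2 = 1$. By parts (a)--(c) already established, each $u_k$ is axially symmetric about some axis $e_k$ (which, by rotational invariance, we may fix as $e_k = e$), strictly monotone in $\theta$, and — being antisymmetric — satisfies $\mu_{p_k}(u_k) = 0$, so $u_k$ solves the pure eigenvalue-type problem $-\Delta u_k = \lambda_{p_k} |u_k|^{p_k - 2} u_k$ in $B$ with homogeneous Neumann data. The first step is to obtain the asymptotics of $\Lambda_{p_k}(B)$ as $p_k \to \infty$. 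Using as test functions, e.g., the first nontrivial Neumann eigenfunction of $B$ (or more efficiently a function concentrating near a boundary point), one shows $\|u\|_{p_k} \to \|u\|_\infty$ and hence $\Lambda_{p_k}(B) \to \Lambda_\infty(B) := \inf \{\|\nabla u\|_2^2 : u \in H^1(B),\ \int_B u = 0,\ \|u\|_\infty = 1\}$; in fact one should track the rate, since the contradiction will come from comparing the antisymmetric value with the genuinely optimal (non-antisymmetric) value.

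The heart of the matter is the \emph{non-antisymmetric} competitor. The idea, mirroring the one-dimensional phenomenon in Theorem \ref{sec:introduction-4} for $p > 3q$, is that as $p \to \infty$ the $L^p$ norm rewards functions whose mass is concentrated near a single extreme value rather than split evenly between a positive and a negative bump; an antisymmetric function is forced to ``waste'' half of its profile on the reflected copy. Concretely, I would construct a family of axially symmetric test functions $v_p \in H_{za}(B)$ that near the ``north pole'' $e$ look like a sharp spike (a truncated/rescaled extremal for a related problem, or simply a function of $\theta$ alone that is close to $1$ on a small cap and slightly negative, roughly constant, elsewhere so as to enforce $\int_B v_p = 0$ at negligible $H^1$-cost), and estimate $\|\nabla v_p\|_2^2 / \|v_p\|_p^2$. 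The resulting upper bound for $\Lambda_p(B)$ should, for $p$ large, lie strictly below the value $\Lambda_p^{\mathrm{anti}}(B)$ obtained by minimizing (\ref{eq:12}) over the closed subspace of antisymmetric functions — for which an analogous but strictly worse lower bound can be derived, because antisymmetry forces two equal extreme caps and a Dirichlet-type condition on $H_e \cap B$, effectively doubling the relevant gradient cost per unit of $L^p$-mass. Combining the strict inequality $\Lambda_p(B) < \Lambda_p^{\mathrm{anti}}(B)$ for $p \gg 1$ with the hypothesis that $u_p$ is an antisymmetric minimizer (so $\Lambda_p(B) = \Lambda_p^{\mathrm{anti}}(B)$) yields the contradiction.

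To make the comparison rigorous I would use part (b): since $\frac{\partial u_p}{\partial \theta} < 0$ on $(0,1]\times(0,\pi)$, the maximum of $u_p$ is attained only at $e$ and the minimum only at $-e$, and antisymmetry gives $\max u_p = -\min u_p = \|u_p\|_\infty$. Writing the energy and the $L^p$ norm in the coordinates $(r,\theta)$, one splits $B$ into the upper half $\{\theta < \pi/2\}$ and lower half $\{\theta > \pi/2\}$; antisymmetry makes the two contributions equal, so $\|\nabla u_p\|_2^2 = 2\int_{\{\theta<\pi/2\}} |\nabla u_p|^2$ and likewise for $\|u_p\|_p^p$, while on $\{\theta = \pi/2\}$ the function vanishes. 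This lets one bound $\Lambda_p^{\mathrm{anti}}(B)$ below by (a constant times) the corresponding minimum over the half-disk with a zero Dirichlet condition on the flat side, which is strictly larger — by a factor bounded away from $1$ uniformly in $p$ — than what the free spike near $e$ achieves. The main obstacle I anticipate is purely quantitative: producing matching asymptotic constants in the two bounds sharp enough that the inequality goes the right way for all large $p$, rather than merely in the formal $p \to \infty$ limit; this will require a careful choice of the concentrating test function (plausibly an explicit radial-cap profile for which both integrals can be computed to leading order in $1/p$) and a robust lower bound for the antisymmetric problem that survives the passage to finite $p$. Once the strict separation of the two minimization values is in hand for $p$ large, parts (a)--(c) guarantee the true minimizer has the stated axial shape but, by the separation, cannot be antisymmetric, which is exactly (e).
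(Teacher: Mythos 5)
Your proposal addresses only part (e) (taking (a)--(d) as given), and for (e) your overall strategy is the same as the paper's: show the strict inequality $\Lambda_p<\Lambda_p'$ between the unrestricted and the antisymmetric minimization values for large $p$, so that no minimizer can be antisymmetric. The mechanism you identify (both quantities tend to $0$ in dimension two, and dropping antisymmetry buys roughly a factor $2$) is also the right one. A small remark: your intermediate claim $\Lambda_p\to\Lambda_\infty(B):=\inf\{\|\nabla u\|_2^2: \int_B u=0,\ \|u\|_\infty=1\}$ is empty in $N=2$, since $H^1(B)\not\hookrightarrow L^\infty(B)$ forces $\Lambda_\infty(B)=0$; as you note, only the rate matters.

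The genuine gap is the quantitative comparison, and you yourself flag it without resolving it. Your plan needs two ingredients: a spike upper bound for $\Lambda_p$ and a lower bound for $\Lambda_p'$ that beats it \emph{with matching constants for all large finite $p$}. The identity behind your ``doubling'' heuristic, namely $\Lambda_p'=2^{1-2/p}\,\Lambda_p^{\mathrm{mix}}(B_+)$ where $\Lambda_p^{\mathrm{mix}}(B_+)$ is the quotient minimized over functions on the half-disk vanishing on the flat side, is correct but does not by itself compare $\Lambda_p'$ with $\Lambda_p(B)$: you would still have to prove that $\Lambda_p^{\mathrm{mix}}(B_+)$ is not essentially smaller than $\Lambda_p(B)$, i.e.\ a sharp lower bound of Ren--Wei type (something like $p\,\Lambda_p'\ge 8\pi e-o(1)$ against $p\,\Lambda_p\le 4\pi e+o(1)$), which requires a concentration analysis for the Neumann and mixed problems that the proposal does not supply. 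The paper avoids this entirely: it takes the antisymmetric minimizer $v_p$ itself (normalized by $\|\nabla v_p\|_2=1$), uses that $v_p$ vanishes on $H_e\cap B$, restricts it to $B_+$, extends by zero and subtracts the mean. The resulting $\tilde u_p\in H_{za}$ has Dirichlet energy exactly $\tfrac12$ and $\|\tilde u_p\|_p\ge 2^{-1/p}\|v_p\|_p-C$, where $\|v_p\|_1\le C$ uniformly by Poincar\'e; since $\Lambda_p'\to 0$ (which follows from the soft Ren--Wei fact $\hat\Lambda_p(B_+)\to 0$ via odd reflection), $\|v_p\|_p=(\Lambda_p')^{-1/2}\to\infty$ and the mean-correction error is negligible, giving $\Lambda_p\le \bigl(2^{-(1-2/p)}+o(1)\bigr)\Lambda_p'<\Lambda_p'$. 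Note that this same zero-extension-plus-mean-correction comparison is exactly what would close your half-disk reduction without any sharp constants, so your argument can be repaired, but as written the decisive inequality is asserted, not proved.
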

When $u: B \to \R$ is a function defined on the unit ball $B$ which only depends on $r=|x|$ and $\theta:= 
\arccos\bigl(\frac{x}{|x|}\cdot e\bigr)$ for some fixed 
$e \in \partial B$, then we freely vary between the notations 
$u(x),\: x \in B$ and $u(r,\theta),\: 0 \le r \le 1,\: \theta \in [0,\pi]$.

\begin{Rem}
\label{sec:introduction-2}
{\rm 
(i) In the case $p=2$, minimizers of (\ref{eq:12}) are precisely the eigenfunctions of the Neumann 
Laplacian on $B$ corresponding to the first nontrivial eigenvalue $\Lambda_2$. For these eigenfunctions, properties (a)-(d) can be verified easily, see Section~\ref{sec:case-p=2}.
  
(ii) Properties 
(a) and (b) imply that $u$ is {\em foliated Schwarz symmetric} in the sense of \cite{SW,BWW}. In \cite{kawohl} this symmetry is called {\em spherical symmetry}, whereas in \cite{brock} it is called {\em codimension-one symmetry}. 
    
(iii) By properties (b) and (c), $\u$ takes its maximum and minimum precisely at the two antipodal points $\{\pm e\}$ on the boundary of $B$
and has precisely two nodal domains. In particular, $u$ is a nonradial function. At first glance, one might guess that (c) follows 
from a monotone rearrangement along straight lines. However, it is unclear whether the 
Dirichlet integral in the numerator of (\ref{eq:12}) decreases under this 
rearrangement, see \cite[Remark~2.36]{kawohl}. Our proof follows a different approach described below.

(iv) In the case that $u$ is antisymmetric with respect to the reflection  
at the hyperplane $H_e$, the four nodal domains of $\partial_\tau u$ are the four quadrants in $B$
cut off by the hyperplanes
$H_e$ and $H_\tau:=\{x \in \R^N:\: x \cdot\tau = 0\}$.

(v) Part (d) and (e) show that the `antisymmetry breaking' observed in dimension one (see Theorem~\ref{sec:introduction-4} above) also occurs in the two-dimensional case for $p$ somewhere strictly between $2$ and $\infty$. It would be interesting to have more information about the precise value where the symmetry breaking occurs. In dimensions $N \ge 3$, we 
do not know whether for any $p$ there exist minimizers which are not antisymmetric. 
 
(vi) Part (d) and (\ref{eq:8}) yield 
$\mu_p=\mu_p(u)=0$ for $p$ close to $2$ and any minimizer $\u$ in (\ref{eq:12}), hence $\u$ solves an equation 
with a homogeneous right hand side. 

(vii) In the case that $p=2$, 
$u(r,\theta)=g(r)\cos\theta$ for some function $g:[0,1]\to\R$. So,
it seems natural to ask if
there exist functions $R:[0,1]\to\R$
and $\Theta:[0,\pi]\to\R$ such that $u(r,\theta)=R(r)\Theta(\theta)$, for $p>2$. 
We will show that this is not the case for $u$ antisymmetric (see Remark~\ref{sec:strict-monot-axial-5}). 

(viii) For an annulus $\A=\{x\in\R^N:\:\rho<|x|<1\}$, $0<\rho<1$, analogues of (a), (b), (d) and (e) hold.
With regard to (c) we only have a partial result, see Section~\ref{sec:annulus} below.

(ix) Part (a) of Theorem~\ref{sec:introduction-1} is also true for $1<p\le 2$, see Section~\ref{sec:axial-symm-minim} below. It would be interesting to know whether parts (a)-(c) also hold for the general quasilinear case $q \not= 2$ and all $1<p < \frac{qN}{N-q}$.  
Most of the arguments in the present paper 
use the fact that minimizers solve a semilinear 
elliptic equation with an increasing 
$C^1$-nonlinearity, so they require $\q=2$, $\p\ge 2$.}  
\end{Rem}

We mention further work related to our results. 
On Riemannian manifolds, the Poincaré-Sobolev inequality has been studied by Zhu \cite{Z1,Z2,Z3}. In 
\cite{Z1} he proves the existence of extremal functions for (\ref{eq:11}) on the standard 
$N$-dimensional
sphere for $\q \in (1,(1+\sqrt{1+8N})/4)$ and critical $\p= \frac{qN}{N-\q}$. In \cite{Z3} he proves  interesting geometric results for the case $\q=1,\p=2$ on a 
two dimensional Riemannian manifold. Recently, Bartsch, Willem and one of the authors proved in \cite{BWW} that least energy sign changing solutions of a superlinear problem similar to (\ref{eq:37}) on a ball or an annulus are axially symmetric. 
Using this information and a result from \cite{bartsch.weth1}, Aftalion and Pacella \cite{AP3} then deduced further 
properties of these solutions, showing in particular that least energy solutions are not radially symmetric. 
However, in contrast to the present paper, only Dirichlet boundary conditions were considered in \cite{BWW,AP3}, and in \cite{AP3} this seems to enter crucially in the proofs. Extremal functions for the {\em trace Sobolev inequality} in a ball have been
determined by Carlen and Loss in \cite{CL} exploiting conformal
invariance and rearrangements of functions; and they were also obtained
later by Maggi and Villani in~\cite{MV} using mass transportation
methods, in the spirit of Cordero-Erausquin, Nazaret and 
Villani~\cite{CNV}.

Next we briefly describe the techniques we use to prove Theorem~\ref{sec:introduction-1}. 
The proof of~(a) follows the ideas in \cite{BWW}. It uses a different characterization of the desired symmetry property 
by simple two point rearrangement inequalities corresponding to the family of hyperplanes which contain the origin. The corresponding two point rearrangement is called polarization, and it is 
also used for instance in \cite{brock,brock.solynin:2000,baernstein.taylor}.    

For the proof of (b) and (c), in the case that $p>2$, we first reformulate the minimization problem (\ref{eq:12}) in terms of a 
non-homogeneous functional $G: \hza(B) \to \R$ whose second derivative is easier to 
study. The minimizers of (\ref{eq:12}) then correspond to minimizers of the restriction of  
$G$ to the associated Nehari manifold. We then
investigate properties of the directional derivatives $\frac{\partial u}{\partial x_i}$ of $u$. These functions are easily seen to be pointwise solutions of the linearized problem, but they do not satisfy homogeneous Neumann boundary conditions. Nevertheless we can use these functions to show the asserted monotonicity properties at least in 
certain subregions of the unit ball $B$. 
The proof is then completed by a moving plane argument.
We feel that this combined approach 
has further applications for problems with Neumann or mixed boundary conditions.
   
Part (d) is proved by a perturbation argument based on the fact that, as $p \to 2$, the minimizers of (\ref{eq:12}) approach eigenfunctions of the Neumann 
Laplacian on $B$ corresponding to the second (hence the first nontrivial) eigenvalue. This eigenvalue is degenerate, 
but we can remove this degeneracy by fixing 
some axis of symmetry. 
 In this fixed space of axially symmetric functions, there is 
only a one-dimensional subspace of corresponding 
eigenfunctions, and these eigenfunctions are antisymmetric. 
Somewhat similar perturbation arguments have been used 
by Dancer~\cite{dancer} and Lin~\cite{lin} to prove uniqueness of {\em positive} solutions for some slightly superlinear 
Dirichlet problems. 

The proof of (e) relies on the facts that, in dimension two,
$\Lambda_p$ converges to $0$ as $p \to \infty$, and the same is true
when the infimum in (\ref{eq:12}) is taken in the class of antisymmetric
functions.

The paper is organized as follows. In Section~\ref{sec:exist-minimz-crit} we prove Proposition~\ref{sec:exist-minimz-crit-1}. 
In Section \ref{sec:case-p=2} we briefly recall how Theorem~\ref{sec:introduction-1}(a)-(d) can be derived in the linear case
$p=2$. Section~\ref{sec:axial-symm-minim} is devoted to the proof of Theorem~\ref{sec:introduction-1}(a) and a weak form of the 
monotonicity property claimed in part (b). The proof of parts (b) 
and (c) are completed in Section~\ref{sec:strict-monot-axial}. In Section~\ref{sec:antisymmetry-p-close} we consider $p$ close to $2$ and prove Theorem~\ref{sec:introduction-1}(d). The proof of part (e) is contained in Section~\ref{sec:antisymm-break-large}. 
The case of an annulus is discussed in Section~\ref{sec:annulus}.
Finally, the appendix is devoted to the Hopf boundary lemma which plays a crucial role in our arguments. Here we prove a version for half-balls with a slightly stronger conclusion as usually stated in the literature.    

Throughout the paper, whenever the underlying domain is the unit ball $B$, we will just write $\hza$ instead of $\hza(B)$, $\Lp$ instead of $\Lp(B)$, etc..
Finally, if $A$ is a subset of $\R^N$, we denote by $\innt(A)$, $\overline A$, and $\partial A$ the interior, 
closure, and boundary of $A$, respectively.

\textbf{Acknowledgement:} The second author would like to thank Norman Dancer, Massimo Grossi and Filomena Pacella for helpful discussions.

\section{Existence of minimizers in the critical case}
\label{sec:exist-minimz-crit}
In this section we prove Proposition~\ref{sec:exist-minimz-crit-1}.
The proof relies on the following estimate. 

\begin{Prop}
\label{sec:exist-minim-crit}
Let $N \ge 3$, $p=2^*= \frac{2N}{N-2}$, and let $\Omega \subset \R^N$ be a smooth bounded domain. Then $\Lp(\Omega)<
\frac{S}{2^{2/N}}$, where $S$ 
is defined in {\rm (\ref{eq:9})}. 
\end{Prop}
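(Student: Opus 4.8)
The plan is to construct explicit test functions for the quotient $\Lp(\Omega)$ in \eqref{eq:12} based on suitably truncated and rescaled Aubin–Talenti instantons, placed near a boundary point of $\Omega$, so that the behaviour of the quotient is governed by the known sharp-constant analysis for the critical-exponent Neumann problem. Fix a point $x_0 \in \partial\Omega$, and after a translation and rotation assume $x_0 = 0$ with the inner normal pointing in the direction $e_N$; since $\partial\Omega$ is smooth, near $x_0$ the domain looks, to leading order, like a half-space. For $\eps > 0$ small, let $U_\eps(x) = \eps^{-(N-2)/2}U(x/\eps)$ with $U$ the standard extremal for \eqref{eq:9} on $\R^N$, truncate it with a fixed cut-off supported near $x_0$, and call the result $v_\eps$. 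Because only a half-ball's worth of mass is inside $\Omega$, the standard estimates of Adimurthi–Mancini \cite{adimurthi.mancini} and Wang \cite{wang} give, as $\eps \to 0$,
\[
\frac{\|\nabla v_\eps\|_{L^2(\Omega)}^2}{\|v_\eps\|_{L^{2^*}(\Omega)}^2} = \frac{S}{2^{2/N}} + O(\eps^{\gamma})
\]
for some $\gamma > 0$ (the power depending on $N$ and the curvature of $\partial\Omega$, but only the sign matters here) — this is precisely the content that makes the Neumann critical problem solvable on $\Omega$, and I would cite it rather than reprove it.

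The remaining issue is the mean-zero constraint: $v_\eps$ does not belong to $\hza(\Omega)$. To fix this I would set $w_\eps = v_\eps - c_\eps$ with $c_\eps = \frac{1}{|\Omega|}\int_\Omega v_\eps$, so that $\int_\Omega w_\eps = 0$. Since $v_\eps$ concentrates at a point, $\|v_\eps\|_{L^1(\Omega)} = O(\eps^{(N-2)/2})$, hence $c_\eps = O(\eps^{(N-2)/2}) \to 0$. The gradient is unchanged, $\|\nabla w_\eps\|_2 = \|\nabla v_\eps\|_2$, which stays bounded away from $0$ and $\infty$, while for the denominator one expands $\|v_\eps - c_\eps\|_{2^*}$ around $\|v_\eps\|_{2^*}$; because $\|v_\eps\|_{2^*}^{2^*}$ is of order $1$ and the correction from subtracting the small constant $c_\eps$ over the fixed-measure set $\Omega$ is of order $|c_\eps| = O(\eps^{(N-2)/2})$, one gets $\|w_\eps\|_{2^*} = \|v_\eps\|_{2^*}\bigl(1 + O(\eps^{(N-2)/2})\bigr)$. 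Therefore
\[
\Lp(\Omega) \le \frac{\|\nabla w_\eps\|_2^2}{\|w_\eps\|_{2^*}^2} = \frac{S}{2^{2/N}} + O(\eps^{\delta})
\]
with $\delta = \min\{\gamma, (N-2)/2\} > 0$, and letting $\eps \to 0$ gives $\Lp(\Omega) \le \frac{S}{2^{2/N}}$.

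To get the strict inequality I would be slightly more careful with the sign of the leading correction. In the Adimurthi–Mancini/Wang expansion the first-order term beyond $S/2^{2/N}$ is negative (it is driven by the mean curvature of $\partial\Omega$ at $x_0$, which one may take positive by choosing $x_0$ appropriately — e.g. a point where $\partial\Omega$ curves away from $\Omega$), and this negative term dominates the positive $O(\eps^{(N-2)/2})$ contributions coming both from the truncation tail and from the mean-zero correction, provided $x_0$ is chosen with positive mean curvature and $\eps$ is small. Hence for such $\eps$ the quotient is strictly below $S/2^{2/N}$, which yields $\Lp(\Omega) < \frac{S}{2^{2/N}}$.

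The main obstacle is bookkeeping the competition of error terms: one must verify that the curvature-driven gain of the boundary-concentrated instanton genuinely beats both the cut-off error and the error introduced by enforcing $\int_\Omega w_\eps = 0$. In low dimensions ($N = 3$) the mean-zero correction $O(\eps^{(N-2)/2}) = O(\eps^{1/2})$ is comparatively large, so one has to check that the curvature term (of order $\eps\log\frac1\eps$ or $\eps$, depending on $N$) is still of lower order or, if not, redo the expansion keeping the constant $c_\eps$ inside from the start; in all cases a point of positive mean curvature exists on $\partial\Omega$ for a bounded smooth domain, so the construction goes through, but the $N=3$ case deserves an explicit line of justification.
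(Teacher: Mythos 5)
Your overall strategy coincides with the paper's: concentrate Aubin--Talenti instantons at a boundary point of positive mean curvature, invoke the Adimurthi--Mancini/Wang expansion for the Neumann quotient, and then subtract the average so as to land in $\hza(\Omega)$. The gap is in your control of the effect of the mean-zero correction on the denominator. The bound $\|w_\eps\|_{2^*}=\|v_\eps\|_{2^*}\bigl(1+O(\eps^{(N-2)/2})\bigr)$ is what the crude triangle inequality gives, but it does not suffice: the curvature gain in the Adimurthi--Mancini expansion is of order $\eps|\log\eps|$ for $N=3$ and of order $\eps$ for $N\ge 4$, so an error of size $\eps^{(N-2)/2}$ swamps the gain when $N=3$ (since $\eps^{1/2}\gg\eps|\log\eps|$) and is of the same order, with an uncontrolled constant, when $N=4$. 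Your concluding claim that the curvature term dominates these contributions ``in all cases'' is precisely the point that needs proof; with the estimate as you state it, the argument only closes for $N\ge 5$, and your own caveat about $N=3$ is not a side remark but an unresolved obstruction.

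The paper's way around this (which you would need, or an equivalent refinement) is to avoid the triangle inequality and instead use the pointwise inequality $\bigl||a+b|^p-|a|^p-|b|^p\bigr|\le C\bigl(|a|^{p-1}|b|+|a||b|^{p-1}\bigr)$ together with the explicit decay $\|u_\eps\|_1=O\bigl(\eps^{(N-2)/2}\bigr)$, $\|u_\eps\|_{p-1}^{p-1}=O\bigl(\eps^{(N-2)/2}\bigr)$ and $a_\eps=O\bigl(\eps^{(N-2)/2}\bigr)$. The cross terms are then products of two quantities each of order $\eps^{(N-2)/2}$, giving
\begin{equation*}
\int_\Omega|u_\eps-a_\eps|^p\,dx\;\ge\;\int_\Omega|u_\eps|^p\,dx-O\bigl(\eps^{N-2}\bigr),
\end{equation*}
so the mean-zero correction perturbs the quotient only at order $\eps^{N-2}$, which is $o(\eps|\log\eps|)$ for $N=3$ and $o(\eps)$ for $N\ge 4$; with this the strict inequality follows in all dimensions $N\ge 3$. (A minor difference of setup: the paper does not truncate the instanton with a cut-off but simply restricts $u_\eps$ to $\Omega$, which is the situation the cited Adimurthi--Mancini estimates address; this point is harmless either way.)
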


\begin{proof}
Without loss of generality, we may assume that $0 \in \partial \Omega$, and 
that the mean curvature of $\partial \Omega$ at $0$ is strictly positive. We consider the Aubin-Talenti instantons $u_\eps \in 
H^{1}(\Omega),\:\eps>0$, restricted to the domain $\Omega$. These are defined by
$$
u_\eps(x)= \frac{ [N(N-2)\eps^2]^{\frac{N-2}{4}} }
{[\eps^2+|x|^2]^{\frac{N-2}{2}} }= \eps^{\frac{2-N}{2}}U\left(\frac{x}{\eps}\right),\qquad x \in \Omega,
$$  
where $U(x):= \frac{[N(N-2)]^{\frac{N-2}{4}}}{[1+|x|^2]^{\frac{N-2}{2}}}$. Then, as $\eps \to 0$, we have the following estimates due to Adimurthi-Mancini (see \cite[Proof of Lemma 2.2]{adimurthi.mancini}):
\begin{align}
\frac{\|\nabla u_\eps\|_2^2}{\|u_\eps\|_{p}^2} & \le \frac{S}{2^{2/N}} 
\Bigl(1-c_0\eps |\log \eps| + O(\eps)\Bigr) \qquad \text{if }N= 3,\label{eq:26}\\  
\frac{\|\nabla u_\eps\|_2^2}{\|u_\eps\|_{p}^2}& \le \frac{S}{2^{2/N}} 
\Bigl(1-c_1 \eps + O(\eps^2|\log \eps|)\Bigr) \qquad \text{if }N= 4,\label{eq:27}\\     
\frac{\|\nabla u_\eps\|_2^2}{\|u_\eps\|_{p}^2}& \le \frac{S}{2^{2/N}} 
\Bigl(1-c_2\eps + O(\eps^2)\Bigr) \qquad \text{if }N \ge 5\label{eq:28}.     
\end{align}
Above and in the following, $c_0,c_1,c_2,\dots$ are positive constants which may depend on the dimension $N$. Moreover, for $s \ge 1$ we have
\begin{align*}
\|u_\eps\|_s^s &= \int_\Omega u_\eps^s \:dx \le \int_{B_R(0)}u_\eps^s \:dx 
= \eps^{\frac{s(2-N)}{2}} 
\int_{B_R(0)} U^s\left(\frac{x}{\eps}\right)\:dx \\
&= \eps^{\frac{s(2-N)}{2}+N} 
\int_{B_{\frac{R}{\eps}}(0)}U^s(z)\:dz = c_3 \eps^{\frac{s(2-N)}{2}+N} 
\int_{0}^{\frac{R}{\eps}}\frac{r^{N-1}}{(1+r^2)^{\frac{s(N-2)}{2}}}\:dr \\
&\le \eps^{\frac{s(2-N)}{2}+N}\left( c_4 +c_5 
\int_{1}^{\frac{R}{\eps}}r^{(N-1)-s(N-2)}\:dr \right)\\
&= \eps^{\frac{s(2-N)}{2}+N}\Bigl ( c_4 +c_6\eps^{s(N-2)-N} 
\Bigr)\\
&= O\left(\eps^{\min\left\{\frac{s(2-N)}{2}+N,\frac{s(N-2)}{2}\right\}}\right).   
\end{align*}
Here $R>0$ is chosen so large such that 
$\Omega \subset B_R(0)$. In particular,
$$
\|u_\eps\|_1 = O\bigl(\eps^{\frac{N-2}{2}}\bigr)\qquad \text{and}\qquad  
\|u_\eps\|_{p-1}^{p-1}=O\bigl(\eps^{\frac{N-2}{2}}\bigr).
$$ 
We set $v_\eps = u_\eps -a_\eps$, where $a_\eps= (1/|\Omega|)\int_\Omega u_\eps$. Then $v_\eps \in \hza(\Omega)$ and
$a_\eps= O\bigl(\eps^{\frac{N-2}{2}}\bigr)$. We recall that there is $C=C(p)>0$ such that
$$
\bigl ||a+b|^p -|a|^p-|b|^p \bigr| \le C 
\bigl(|a|^{p-1}|b|+|a| |b|^{p-1} \bigr) \qquad \text{for all}\quad a,b \in \R.
$$ 
So, we estimate 
\begin{align*}
\int_\Omega |v_\eps|^p\:dx &= 
\int_\Omega |u_\eps-a_\eps|^p\:dx \\
&\ge \int_\Omega\Bigl(|u_\eps|^p + a_\eps^p 
- C [|u_\eps|^{p-1}a_\eps + |u_\eps|a_\eps^{p-1}]\Bigr)\:dx \\
&= \int_\Omega |u_\eps|^p\:dx + |\Omega|a_\eps^p 
- C \left(a_\eps \int_\Omega |u_\eps|^{p-1}\:dx 
+ a_\eps^{p-1}\int_\Omega |u_\eps|\:dx \right) \\
&\ge \int_\Omega |u_\eps|^p\:dx -O\bigl(\eps^{N-2}\bigr).
\end{align*}
Consequently,
$$
\|v_\eps\|_p^2 \ge \|u_\eps\|_p^2-O\bigl(\eps^{N-2}\bigr),
$$
and therefore
$$
\frac{\|\nabla v_\eps \|_2^2}{\|v_\eps \|_p^2}\le 
\frac{\|\nabla u_\eps \|_2^2}{\|u_\eps\|_p^2- 
O(\eps^{N-2})}=\frac{\|\nabla u_\eps \|_2^2}{\|u_\eps\|_p^2}+ 
O\bigl(\eps^{N-2}\bigr).  
$$
Combining this with (\ref{eq:26})--(\ref{eq:28}), we obtain
$$
\frac{\|\nabla v_\eps \|_2^2}{\|v_\eps \|_p^2} <
\frac{S}{2^{2/N}}\qquad \text{for $\eps$ small enough,}
$$
and hence $\Lp(\Omega) < \frac{S}{2^{2/N}}$.
\end{proof}

\begin{altproof}{{\rm \ Proposition~\ref{sec:exist-minimz-crit-1}} (completed)} We consider a minimizing sequence 
$(u_n)\in \hza(\Omega)$ for (\ref{eq:12}), which we can normalize such that  
$\|\nabla u_n\|_2^2=\Lambda_p$ for all $n$. Hence
$$
{\|u_n\|_p^{2}} \to 1\qquad \text{as $n \to \infty$}.
$$
We may pass to a subsequence such that
\begin{align*}
u_n &\weak u &&\qquad \text{weakly in }\hza(\Omega)\\
u_n &\to u &&\qquad \text{strongly in $L^s(\Omega)$ for 
$s<p=2^*$}\\
u_n(x) &\to u(x) &&\qquad \text{for a.e. $x \in \Omega$}.
\end{align*}
By the Brezis-Lieb Lemma \cite{brezis.lieb},
$$
\|u_n\|_p^p= \|u_n-u\|_p^p + \|u\|_p^p +o(1),
$$
so that
$$
\limsup_{n \to \infty}\Bigl ( \|u_n-u\|_p^2 + \|u\|_p^2 \Bigr)\ge 1,
$$
where equality holds if and only if $u=0$ or $u_n \to u$ strongly in 
$L^p(\Omega)$. But, by definition of $\Lambda_p$, 
$$
\|u_n-u\|_p^2+\|u\|_p^2 \le \frac{ 
\|\nabla (u_n-u)\|_2^2 + \|\nabla u\|_2^2}{\Lambda_p}= 
\frac{\|\nabla u_n\|_2^2+o(1)}{\Lambda_p} =1+o(1). 
$$
Hence we conclude that either $u=0$ or $u \not= 0$ and 
$u_n \to u$ strongly in $L^p(\Omega)$. The first case
can be excluded with the help of Proposition~\ref{sec:exist-minim-crit} and Cherrier's inequality \cite{cherrier}. Indeed, 
Cherrier's inequality states that, for every $\eps>0$, there is a constant $M_\eps$ such that 
$$
\Big(\frac{S}{2^{2/N}}-\eps \Bigr)\|u\|_p^2 \le  
\|\nabla u\|_2^2 + M_\eps \|u\|_2^2.
$$
We choose $\eps= \frac{1}{2}(\frac{S}{2^{2/N}}-\Lp(\Omega))$, which is positive by Proposition~\ref{sec:exist-minim-crit}. Then we get
$$
\Big(\frac{S}{2^{2/N}}-\eps \Bigr)\|u_n\|_p^2 \le  
\|\nabla u_n\|_2^2 + M_\eps \|u_n\|_2^2 \qquad \text{for all $n$}. 
$$
If $u=0$, then $u_n \to 0$ in $L^2(\Omega)$, and thus
$$
\Big(\frac{S}{2^{{2}/{N}}}-\eps \Bigr) 
\le \lim_{n \to \infty} \frac{\|\nabla u_n\|_2^2}{\|u_n\|_p^2} 
= \Lambda_p,
$$
contrary to the choice of $\eps$. We conclude that $u \not= 0$ and $u_n \to u$ in $L^p(\Omega)$, so that
$$
\frac{\|\nabla u\|_2^2}{\|u\|_p^2} \le \lim_{n \to \infty} 
\frac{\|\nabla u_n\|_2^2}{\|u_n\|_p^2} 
= \Lambda_p.
$$ 
Hence $u$ is a minimizer of (\ref{eq:12}). The proof is finished.
\end{altproof}

\section{The case $p=2$}
\label{sec:case-p=2}
Henceforth (except for Theorems \ref{sec:antisymmetry-p-close-2} and \ref{sec:antisymm-break-large-4}, and Section~\ref{sec:annulus}) we focus on the case where the underlying domain is the open unit ball $B \subset \R^N$ centered at zero. In this section we briefly recall some known facts about minimizers of {\rm (\ref{eq:12})} in the `linear' case $p=2$, thus verifying Theorem~\ref{sec:introduction-1}(a)-(d) in this special case. The minimizers are eigenfunctions of the Laplacian with
homogeneous Neumann boundary conditions corresponding to
the first nonzero eigenvalue $\Lambda_2$.  
It is well known that the eigenspace corresponding to 
$\Lambda_2$ is $N$-dimensional, 
and that every eigenfunction can be written as 
\begin{equation}
\label{eq:105}
u(x)=g(r)\Bigl( \frac{x}{r}\cdot e \Bigr) = g(r) \cos \theta  
\end{equation}
for some unit vector $e \in \R^N$, see e.g. \cite{Weinberger}. 
Here $r=|x|$, $\theta= \arccos (\frac{x}{r} \cdot e)$, and $g:[0,1]\to\Rb$ is the
(up to a positive constant) unique solution of the problem
\begin{equation}
\label{eq:103}
g''+\frac{N-1}{r}g'+\Bigl(\Lambda_2-\frac{N-1}{r^2}\Bigr)g=0,\quad g>0 \quad\text{in}\ (0,1],\qquad g(0)=0,\quad g'(1)=0.
\end{equation}
Hence $u$ is axially symmetric with respect to the axis passing through $0$ and $e$. 
Moreover,
$$
\frac{\partial u}{\partial\theta}(r,\theta)=-g(r)\sin\theta<0\qquad\text{for}\quad 0<r\leq 1,\ 0<\theta<\pi,
$$
so that assertions (a), (b) and (d) of Theorem~\ref{sec:introduction-1} hold for $p=2$. To verify (c), we note that $g$ is strictly 
increasing, since $g'$ only vanishes at the 
point $1$. Without loss of generality, we assume $e=e_N=(0,\dots,0,1)$. Then
$$
\partial_e u = \frac{\partial u}{\partial x_N}=
\frac{\partial}{\partial x_N}\left[g(r)\frac{x_N}{r}\right]=
\frac{g'(r)x_N^2}{r^2}-\frac{g(r)x_N^2}{r^3}+\frac{g(r)}{r}=\frac{g'(r)x_N^2}{r^2}+\frac{g(r)(r^2-x_N^2)}{r^3}.
$$
Since both $g$ and $g'$ are positive in $(0,1)$,
$\partial_e u>0$ in $\overline{B}\setminus\{\pm e_N,0\}$.  Also, $\partial_e u(0)=g'(0)>0$.
Now let $\tau$ be another unit vector in $\R^N$ orthogonal to $e$. We may suppose, without loss of generality, that $\tau=e_1$. We compute
\begin{equation*}
\partial_\tau u=
\frac{\partial u}{\partial x_1}=\frac{\partial}{\partial x_1}\Bigl[g(r)\frac{x_N}{r}\Bigr]=\frac{x_1x_N}{r}\frac{d}{dr}
\Bigl[\frac{g(r)}{r}\Bigr]
=\frac{x_1x_N}{r^{N+2}}\Bigl[r^Ng'(r)-r^{N-1}g(r)\Bigr]=
\frac{x_1x_N}{r^{N+2}}f(r),
\end{equation*}
where $f:\:]0,1]\to\R$ is defined by
$f(r):=r^Ng'(r)-r^{N-1}g(r)$.  Using (\ref{eq:103}),
$$f'(r)=r^N\Bigl(g''(r)+\frac{N-1}{r}g'(r)-\frac{N-1}{r^2}g(r)\Bigr)=-\Lambda_2r^Ng(r)<0.$$
From  (\ref{eq:105}) with $\theta=0$, $g'$ is bounded, 
so $\lim_{r\to 0}f(r)=0$.
We deduce that $f$ is negative, so that the nodal domains of $\frac{\partial u}{\partial x_1}$ are precisely the four quadrants in $B$ cut off by the hyperplanes $\{x\in\R^N:\:x_1=0\}$ and $\{x\in\R^N:\:x_N= 0\}$.
This finishes the proof of Theorem~\ref{sec:introduction-1}(c) for $p=2$. 

\section{Axial symmetry of minimizers}
\label{sec:axial-symm-minim}
Let $1< p \le 2^*$ for $N \ge 3$, $1< p< \infty$ for 
$N=2$. Solely in this section we allow values $1<p<2$; for these 
values of $p$ we only have the guarantee that the solutions of (\ref{eq:37}) belong to $C^{2,\alpha}$,
as opposed to belonging to
$C^{3,\alpha}$ for $2\leq p\leq 2^*$. We have the following symmetry result.

\begin{Prop}
\label{sec:axial-symm-minim-1}
Let $\u$ be a minimizer for {\rm (\ref{eq:12})} on $B$. Then $\u$ is foliated Schwarz symmetric, 
i.e.\ there exists a unit vector $e \in \R^N,\:|e|=1$ such that $\u(x)$ only 
depends on $r=|x|$ and $\theta:= \arccos\bigl(\frac{x}{|x|}\cdot e\bigr)$, and $u$ is nonincreasing in $\theta$. Moreover, either
$u$ does not depend on $\theta$ (hence it is a radial function), or $\frac{\partial \u}{\partial \theta}(r,\theta)<0$ for 
$0<r \le 1,\:0<\theta<\pi$. 
\end{Prop}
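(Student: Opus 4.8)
The plan is to prove this via a polarization (two-point rearrangement) argument, following the strategy of \cite{BWW}. First I would recall the key characterization of foliated Schwarz symmetry: a continuous function $u$ on $\overline B$ is foliated Schwarz symmetric with respect to some axis $\R e$ if and only if, for every closed half-space $H$ with $0 \in \partial H$, either $u \le u \circ \sigma_H$ on $H$ or $u \ge u \circ \sigma_H$ on $H$, where $\sigma_H$ denotes the reflection at the hyperplane $\partial H$ and we write $u_H$ for the polarization of $u$ with respect to $H$. So the first step is to establish that a minimizer $u$ satisfies this dichotomy for every such $H$.

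The heart of the argument is the following: fix a hyperplane $\partial H$ through the origin, let $\sigma = \sigma_H$, and consider the two polarizations $u_H$ and $u_{H'}$ where $H' = \sigma(H)$ is the complementary half-space. A standard fact about polarization is that it preserves the $L^p$-norm and does not increase the Dirichlet integral: $\|\nabla u_H\|_2 \le \|\nabla u\|_2$ and $\|u_H\|_p = \|u\|_p$; moreover the mean-zero condition is preserved because $\int_B u_H = \int_B u$. Hence $u_H$ and $u_{H'}$ are again minimizers of (\ref{eq:12}). Now a minimizer solves the Euler equation (\ref{eq:37}), which — after passing to the non-homogeneous reformulation or directly — is a semilinear elliptic equation with a $C^1$, nondecreasing nonlinearity (here $p \ge 2$ is used, though part (a) is claimed for $1<p\le 2$ as well; for $1<p<2$ one argues with the appropriate regularity). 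The polarized function $u_H$ solves a differential inequality, and one compares $u_H$ with $u$ (resp.\ $u_{H'}$ with $u$) on the half-ball $B \cap H$. If neither $u \equiv u_H$ nor $u \equiv u \circ \sigma$ on $B\cap H$, a strong maximum principle / Hopf-lemma argument (this is exactly where the half-ball Hopf lemma from the appendix enters, to handle the free Neumann boundary on $\partial B \cap H$) forces a strict sign, contradicting minimality of one of the polarizations. This yields the dichotomy and hence foliated Schwarz symmetry: there is a unit vector $e$ with $u(x)=u(r,\theta)$ nonincreasing in $\theta$.

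For the strict-monotonicity alternative, suppose $u$ does depend on $\theta$, i.e.\ $u$ is not radial. Then $\partial u/\partial\theta \le 0$ everywhere, and $w := \partial_\tau u$ for a suitable tangential direction $\tau$ (equivalently an angular derivative) satisfies the linearized equation $-\Delta w = \lp(p-1)|u|^{p-2} w$ in the relevant half-ball, with $w \le 0$ there and appropriate boundary behaviour on the flat part. Applying the strong maximum principle to $w$ on the open half-ball, either $w \equiv 0$ there — which would force $u$ to be radial, contrary to assumption — or $w < 0$ in the interior; the Hopf boundary lemma for the half-ball (appendix) then upgrades this to $\partial u/\partial\theta < 0$ for $0 < r \le 1$, $0 < \theta < \pi$, including the spherical boundary. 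Care is needed near $r=0$ and near $\theta \in \{0,\pi\}$, where the polar coordinate frame degenerates; there one argues in Cartesian coordinates directly.

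The main obstacle I anticipate is the boundary analysis: the polarized functions do not obey homogeneous Neumann conditions on the reflected piece of $\partial B$, and the comparison takes place on a half-ball whose boundary has a corner (the equatorial $(N-2)$-sphere $\partial B \cap \partial H$). Getting a clean strong maximum principle and Hopf lemma in this mixed-boundary, non-smooth setting — strong enough to conclude strict inequality up to the spherical boundary — is the delicate point, and is precisely why the paper isolates a sharpened half-ball Hopf lemma in the appendix. A secondary technical point is the low-regularity case $1<p<2$, where one only has $C^{2,\alpha}$ solutions, so the comparison arguments must be phrased to work at that regularity.
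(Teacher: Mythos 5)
Your treatment of the symmetry itself is essentially the paper's argument (polarization with respect to every hyperplane through the origin, plus the sharpened half-ball Hopf lemma of the appendix), but the concluding step as you state it is not how the conclusion is reached: nothing ever "contradicts minimality of one of the polarizations", since every polarization remains a minimizer. In the paper (Lemma~\ref{sec:axial-symm-minim-2}) one uses that $u_{\mbox{\tiny $H$}}$ is again a minimizer, hence solves (\ref{eq:37}) with the \emph{same} constant $\mu_p$ -- for this you need not only $\int_B u_{\mbox{\tiny $H$}}=\int_B u$ but also $\int_B|u_{\mbox{\tiny $H$}}|^{p-2}u_{\mbox{\tiny $H$}}=\int_B|u|^{p-2}u$, which you did not mention -- and then applies Lemma~\ref{sec:appendix-1} to $w:=|u-u\circ\sigma_H|=2u_{\mbox{\tiny $H$}}-(u+u\circ\sigma_H)$, which satisfies $-\Delta w\ge 0$ in $H\cap B$ by monotonicity of $t\mapsto|t|^{p-2}t$, $w=0$ on $\partial H\cap\overline B$ and $\partial w/\partial\nu=0$ on $H\cap\partial B$. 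This yields directly the trichotomy $u>u\circ\sigma_H$, $u<u\circ\sigma_H$ or $u\equiv u\circ\sigma_H$ on $H\cap\overline B$, together with the strict derivative $\partial u/\partial h\neq 0$ on $\partial H\cap\overline B$ in the first two cases; no contradiction argument is involved, and the argument works for all $1<p\le 2^*$, which Section~\ref{sec:axial-symm-minim} explicitly allows.

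For the strict-monotonicity alternative you take a genuinely different route from the paper, and as written it has a real flaw: a Cartesian tangential derivative $\partial_\tau u$ is \emph{not} "equivalently an angular derivative" and is not signed on a half-ball -- indeed by Theorem~\ref{sec:introduction-1}(c) it has four nodal domains -- so the maximum principle cannot be applied to it. The correct object is the rotation generator $v=x_N u_{x_1}-x_1 u_{x_N}$, which equals $\frac{x_1}{|x-(x\cdot e_N)e_N|}\,\frac{\partial u}{\partial\theta}$ for an axially symmetric $u$, solves the clean linearized equation with Neumann condition by rotation invariance, and is $\le 0$ on $\{x_1>0\}\cap B$ once the foliated Schwarz symmetry is known; with this correction your strong-maximum-principle/Hopf argument does give $\frac{\partial u}{\partial\theta}<0$ up to $r=1$ (the appendix corner lemma is not even needed there, since $\theta\in\{0,\pi\}$ is excluded). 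Two caveats remain: this linearization requires $p\ge 2$ (for $1<p<2$ the potential $(p-1)|u|^{p-2}$ is singular on the nodal set), whereas the paper's proof covers $1<p<2$ as well, because it never linearizes: it simply reuses Lemma~\ref{sec:axial-symm-minim-2} -- if case (iii) occurs for some $H_0\in\cH_e$, an iterated reflection ("angle-doubling") argument shows $u$ is radial, and otherwise case (i) holds for every $H\in\cH_e$, which is precisely the strict angular monotonicity. So your alternative is viable for $p\ge 2$ after replacing $\partial_\tau u$ by the rotational derivative, but the paper's route is both more elementary and more general in $p$.
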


Let ${\cal H}$ be the family of closed
half-spaces $H$ in $\Rb^N$ such that $0$ lies in the hyperplane $\partial H$.
For $H\in{\cal H}$, we denote by $\sigmah:\Rb^N\to\Rb^N$ the reflection with respect to $\partial H$. We start with the following lemma.

\begin{Lem}
\label{sec:axial-symm-minim-2}
Let $\u$ be a minimizer for {\rm (\ref{eq:12})} on $B$. Let $H\in{\cal H}$, and let $h= h(x)$ denote the outward normal for $x \in \partial H$. Then one of the following holds.
\begin{itemize}
\item[{\rm (i)}] $u(x)>u(\sigmah(x))$ for all $x \in \overline{B} \cap \innt(H)$, and $\frac{\partial u}{\partial h}<0$ on $ \partial H \cap \overline{B}$,
\item[{\rm (ii)}] $u(x)<u(\sigmah(x))$ for all $x \in \overline{B} \cap \innt(H)$, and $\frac{\partial u}{\partial h}>0$ on $\partial H \cap \overline{B}$,
\item[{\rm (iii)}] $u(x)= u(\sigmah(x))$ for all $x \in \overline{B}$.
\end{itemize}
\end{Lem}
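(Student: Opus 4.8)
The plan is to use that the minimization problem (\ref{eq:12}) is invariant under \emph{polarization} (two-point rearrangement) with respect to the half-spaces in ${\cal H}$, following the strategy of \cite{BWW}. Fix $H\in{\cal H}$ and, for $g\colon\overline B\to\R$, let $g^H$ denote its polarization: $g^H(x)=\max\{g(x),g(\sigmah x)\}$ for $x\in H$, and $g^H(x)=\min\{g(x),g(\sigmah x)\}$ for $x\in\overline B\setminus H$. Since $B$ is symmetric under $\sigmah$, three standard properties hold. First, polarization preserves the distribution function, so $\|u^H\|_p=\|u\|_p$. Second, pairing each $x$ with $\sigmah x$ gives $g^H(x)+g^H(\sigmah x)=g(x)+g(\sigmah x)$, whence $\int_B g^H=\int_B g$ for every integrable $g$; in particular $u^H\in\hza$. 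Third, polarization does not increase the Dirichlet integral, $\|\nabla u^H\|_2\le\|\nabla u\|_2$ (see \cite{brock.solynin:2000,baernstein.taylor}). Since $u$ is a minimizer of (\ref{eq:12}), which we may assume normalized, $\|\nabla u\|_2=1$, these three facts force $u^H$ to be a minimizer as well, with $\|\nabla u^H\|_2=\|\nabla u\|_2=1$.

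Next I would extract the consequences. By elliptic regularity, the minimizer $u^H$ solves (\ref{eq:37}). Since $t\mapsto|t|^{p-2}t$ is increasing on $\R$, polarization commutes with it, $(|u|^{p-2}u)^H=|u^H|^{p-2}u^H$, and combined with the integral preservation above this gives $\mu_p(u^H)=\mu_p(u)$; hence $u$ and $u^H$ solve (\ref{eq:37}) with the \emph{same} $\lp$ and $\mu_p$. The reflected function $v:=u\circ\sigmah$ also solves (\ref{eq:37}) with the same constants, since $\sigmah$ is an isometry mapping $B$ onto $B$, and $\partial v/\partial\nu=0$ on $\partial B\cap\innt(H)$.

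Now set $D:=\innt(H)\cap B$ (an open half-ball, connected) and $d:=u^H-u$. On $D$ one has $d\ge 0$, and $-\Delta d=\lp\bigl(|u^H|^{p-2}u^H-|u|^{p-2}u\bigr)\ge 0$ by monotonicity, so $d$ is nonnegative and superharmonic on $D$; note that only the \emph{sign} of the coefficient, not its boundedness, is used, which is why the restriction $p\ge 2$ is not needed in this section. By the strong maximum principle, either $d>0$ throughout $D$ or $d\equiv 0$ on $D$. If $d>0$ on $D$, then $u^H=v$ there, i.e.\ $u<v$ on $D$. If $d\equiv 0$, then $u\ge v$ on $D$, and $w:=u-v$ is again nonnegative and superharmonic on $D$ ($-\Delta w=\lp(|u|^{p-2}u-|v|^{p-2}v)\ge 0$), so by the strong maximum principle either $w\equiv 0$ on $D$ — which, by continuity and the fact that $\sigmah$ fixes $\partial H$, means $u\equiv u\circ\sigmah$ on $\overline B$, i.e.\ alternative (iii) — or $w>0$ throughout $D$. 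So, apart from case (iii), we are left with one function $W\in\{d,\,w\}$ that is positive on $D$, vanishes on the flat face $\partial H\cap B$, and has vanishing Neumann derivative on the curved part $\partial B\cap\innt(H)$ (both $u^H,u$ and $u,v$ satisfy the Neumann condition there).

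It remains to read off the boundary information. On the curved part, Hopf's lemma would give $\partial W/\partial\nu<0$ at any boundary zero of $W$, contradicting the Neumann condition; hence $W>0$ on $\partial B\cap\innt(H)$, so $W>0$ on all of $\overline B\cap\innt(H)$. On the flat face, Hopf's lemma gives $\partial W/\partial h<0$ on $\partial H\cap B$; using $\nabla(u\circ\sigmah)(x)=\sigmah\nabla u(x)$ for $x\in\partial H$ — so the $h$-component of $\nabla(u\circ\sigmah)$ at such $x$ equals $-\partial u/\partial h(x)$ — this translates, when $W=w$, into $\partial u/\partial h<0$ together with $u>u\circ\sigmah$ on $\overline B\cap\innt(H)$ (alternative (i)), and when $W=d$, into $\partial u/\partial h>0$ together with $u<u\circ\sigmah$ on $\overline B\cap\innt(H)$ (alternative (ii)). The one point not covered by the classical Hopf lemma is the strict sign of $\partial u/\partial h$ at the rim $\partial H\cap\partial B$, where $D$ has no interior ball; here I would invoke the sharpened Hopf lemma for half-balls established in the appendix, which yields the strict inequality up to and including the rim. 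This corner behaviour is the main obstacle; the rest is a routine assembly of polarization invariance, the strong maximum principle, and Hopf's lemma.
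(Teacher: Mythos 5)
Your proposal is correct and takes essentially the same route as the paper: polarization invariance shows $\uh$ is again a minimizer, hence a solution of (\ref{eq:37}) with the same $\lp$ and $\mu_p$, and the comparison function on the half-ball is handled by the strong maximum principle together with the sharpened Hopf lemma of the appendix (Lemma~\ref{sec:appendix-1}) to get the strict sign up to the rim. The only cosmetic difference is that the paper packages your two case functions $d=\uh-u$ and $w=u-u\circ\sigmah$ into the single function $|u-u\circ\sigmah|=2\uh-(u+u\circ\sigmah)$ and applies the appendix lemma once.
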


\begin{proof}
Without loss of generality, we may assume that $\|\nabla u\|_2=1$.
As in \cite{BWW} we denote by $\uh:\overline{B}\to\Rb$,
$$\uh(x)=\left\{
\begin{array}{ll}
\max\{\u(x),\u(\sigmah(x))\},&x\in H\cap\overline{B},\\
\min\{\u(x),\u(\sigmah(x))\},&x\in\overline{B}\setminus H,
\end{array}\right.
$$
the polarization of $\u$ with respect to $H$.
By Lemma 2.2 of \cite{BWW},
\begin{equation}
\label{eq:29}
\int_B\uh=0,\qquad
\int_B|\uh|^{p-2}\uh=\int_B|\u|^{p-2}\u,\qquad
\int_B|\uh|^{p}=\int_B|\u|^{p},
\end{equation}
while, by Proposition 2.3 of \cite{SW},
$$
\int_B|\nabla\uh|^2=\int_B|\nabla\u|^2.
$$
Hence $\uh$ is also a minimizer of (\ref{eq:12}) on $B$,
and thus it is a weak (and therefore $C^{2,\alpha}$) solution of
$$-\Delta\uh=\lp |\uh|^{p-2}\uh+ \mu_p\ {\rm in}\ B,\quad
\textstyle\frac{\partial\uh}
{\partial\nu}=0\ {\rm on}\ \partial B.$$
By (\ref{eq:8}) and (\ref{eq:29}), we have $\mu_p=\mu_p(\uh)= \mu_p(u)$. Following \cite{BWW}, we consider 
$$
w:\overline{B \cap H} \to \Rb,\qquad w:=|\u-\u\circ\sigmah|=2\uh-(\u+\u\circ\sigmah).
$$
Then $w \in C^{2,\alpha}(\overline{B \cap H})$ (since $\u,\uh, 
u \circ \sigmah \in C^{2,\alpha}(\overline{B}))$, and $w$ satisfies
$$-\Delta w=\lp \left[2|\uh|^{p-2}\uh-
|\u|^{p-2}\u-
|\u\circ\sigmah|^{p-2}\u\circ\sigmah\right]\geq 0$$
on $H \cap B$. It also satisfies the boundary conditions 
$$
w=0 \quad \text{on }\partial H \cap \overline{B},\qquad  
\frac{\partial w}{\partial\nu}=0\quad {\rm on}\ H\cap\partial B.$$
From Lemma~\ref{sec:appendix-1} we now conclude that either $w\equiv 0$ on $H \cap \overline{B}$ or 
$$ 
w>0 \quad \text{in }\innt(H) \cap \overline{B}, \qquad   
\frac{\partial w}{\partial h}<0 \quad \text{on }\partial H \cap \overline{B}.    
$$ 
In the first case (iii) follows. In the second case, 
we either have
$$
u > u \circ \sigma \quad \text{in }\innt(H) \cap \overline{B}
\qquad \text{and}\qquad  \frac{\partial u}{\partial h}=
\frac{1}{2}\frac{\partial w}{\partial h}<0 \quad \text{on }\partial H \cap \overline{B},
$$
or
$$
u < u \circ \sigma \quad \text{in }\innt(H) \cap \overline{B}
\qquad \text{and}\qquad \frac{\partial u}{\partial h}=
-\frac{1}{2}\frac{\partial w}{\partial h}>0 \quad \text{on }\partial H \cap \overline{B}.
$$
Hence either (i) or (ii) holds. The proof is finished. 
\end{proof}

\begin{altproof}{{\rm \ Proposition~\ref{sec:axial-symm-minim-1}} (completed)} Let $e \in \partial B$ be such that 
$\u(e)= \max\{u(x)\::\: x\in \partial B\}$. Let $\cH_e \subset \cH$ be the set of all half-spaces $H$ in $\Rb^N$ with $0 \in \partial H$ and 
$e \in \innt (H)$. Then Lemma \ref{sec:axial-symm-minim-2} 
and our choice of $e$ imply that $u \ge u \circ \sigmah$ on 
$H \cap \overline{B}$ for every half-space $H \in \cH_e$. 
This however is equivalent to the foliated 
Schwarz symmetry of $\u$ with respect to $e$, as follows 
immediately from \cite[Lemma 2.4]{BWW}, or, alternatively, from \cite[Lemma~4.2]{brock}.

It remains to prove that either $\u$ does not depend on $\theta$, or that $\frac{\partial \u}{\partial \theta}(r,\theta)<0$ for $0<r \le 1,\:0<\theta<\pi$. Obviously, the last property is 
equivalent to
$$
\frac{\partial u}{\partial h}<0\quad \text{on }\partial H \cap \overline{B}\qquad \text{for every }H \in \cH_e. 
$$
We already know that no half-space $H \subset \cH_e$ satisfies property (ii). Now suppose that (iii) applies for some $H_0 \subset \cH_e$. Let $\theta_0$ be the angle between $e$ and the hyperplane $\partial H_0$, which is less than or equal to $\pi/2$. Let 
$e_0= \sigmaho(e)$. Then $\arccos(e_0 \cdot e)=2\theta_0$. Moreover, (iii) implies that 
$u(re_0)=u(re)$ for $0 \le r \le 1$. Since $u$ is nonincreasing in the angle 
$\theta \in (0,\pi)$, we conclude that 
$u(r,\theta)= u(r,0)$ for all 
$\theta \le 2 \theta_0$. From Lemma~\ref{sec:axial-symm-minim-2} we then deduce that (iii) 
holds for all $H_1 \subset \cH_e$ for which the angle between $e$ and $H_1$ is less then $2 \theta_0$. Then, by the same argument as before, $u(r,\theta)= u(r,0)$ for all 
$\theta \le \min\{4 \theta_0,\pi\}$. Arguing successively, in a finite number of steps we obtain $u(r,\theta)= u(r,0)$ for all 
$\theta \le \pi$. This shows that $u$ is radial.
We conclude that either $u$ is a radial function, or 
$$
\frac{\partial u}{\partial h}<0\quad \text{on }\partial H \cap \overline{B}\qquad \text{for every }H \in \cH_e. 
$$
This concludes the proof. 
\end{altproof}

\section{Strict monotonicity in the axial direction}
\label{sec:strict-monot-axial}
In this section we prove parts (b) and (c) of Theorem~\ref{sec:introduction-1}. 
We start with a few preliminaries and recall some known facts. 

\begin{Lem}
\label{sec:strict-monot-axial-2}
Suppose that $u \in C^2(\overline B)$ satisfies 
$\frac{\partial u}{\partial \nu}= 0$ on $\partial B$, where $\nu$ 
is the outward normal. Then
$$
\nabla u(x_0) \cdot \frac{\partial }{\partial \nu}\nabla u(x_0) =
-|\nabla u(x_0)|^2 \qquad \text{for}\quad x_0 \in \partial B.
$$
\end{Lem}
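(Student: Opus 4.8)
The plan is to work on the boundary sphere $\partial B$ and exploit the fact that on $\partial B$ the outward normal is simply $\nu(x_0)=x_0$ (since $B$ is the unit ball). Write $u\in C^2(\overline B)$ with $\partial u/\partial\nu=0$ on $\partial B$, i.e. $\nabla u(x)\cdot x=0$ for all $x\in\partial B$. The idea is to differentiate this identity tangentially along the sphere. Concretely, extend $\nu$ to the radial vector field $\nu(x)=x$ on a neighbourhood of $\partial B$; then $g(x):=\nabla u(x)\cdot x$ vanishes identically on $\partial B$, so its tangential gradient on $\partial B$ vanishes.

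First I would pick $x_0\in\partial B$ and a curve $\gamma(t)$ on $\partial B$ with $\gamma(0)=x_0$ and $\gamma'(0)=w$, where $w$ is any vector tangent to $\partial B$ at $x_0$, i.e. $w\cdot x_0=0$. Differentiating $0=\nabla u(\gamma(t))\cdot\gamma(t)$ at $t=0$ gives
\begin{equation*}
0=\bigl(D^2u(x_0)\,w\bigr)\cdot x_0 + \nabla u(x_0)\cdot w,
\end{equation*}
where $D^2u$ is the Hessian. Here I have used $\gamma''(0)\cdot\nabla u(x_0)$ — wait, that term is $\nabla u(x_0)\cdot\gamma''(0)$, but since $\gamma$ lies on $\partial B$ one has $\gamma''(0)=w' - (|w|^2)x_0$ for the component along $x_0$; the tangential part of $\gamma''(0)$ contributes $\nabla u(x_0)\cdot(\text{tangential})$. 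To avoid bookkeeping with $\gamma''$, the cleaner route is: the function $g(x)=\nabla u(x)\cdot x$ vanishes on $\partial B$, hence $\nabla g(x_0)$ is parallel to $\nu(x_0)=x_0$. Compute $\nabla g = D^2u\cdot x + \nabla u$ (componentwise $\partial_j g = \sum_i u_{x_ix_j}x_i + u_{x_j}$). Evaluating at $x_0$: $\nabla g(x_0) = D^2u(x_0)x_0 + \nabla u(x_0)$, and this is parallel to $x_0=\nu(x_0)$. Therefore its tangential component is zero, which is exactly the displayed identity above for all tangential $w$.

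Now I would take the inner product of $\nabla u(x_0)$ with $\nabla g(x_0)$. Since $\nabla u(x_0)$ is itself tangential (that is what $\partial u/\partial\nu=0$ says at $x_0$), and the tangential part of $\nabla g(x_0)$ is zero, we get $\nabla u(x_0)\cdot\nabla g(x_0)=0$. Expanding,
\begin{equation*}
0=\nabla u(x_0)\cdot\bigl(D^2u(x_0)x_0\bigr) + |\nabla u(x_0)|^2 .
\end{equation*}
Finally, observe that $D^2u(x_0)x_0 = \tfrac{\partial}{\partial\nu}\nabla u(x_0)$: indeed $\partial_\nu\nabla u = (x_0\cdot\nabla)\nabla u$ has $j$-th component $\sum_i x_{0,i}u_{x_ix_j}(x_0)$, which is the $j$-th component of $D^2u(x_0)x_0$ by symmetry of the Hessian. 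Substituting yields $\nabla u(x_0)\cdot\frac{\partial}{\partial\nu}\nabla u(x_0) = -|\nabla u(x_0)|^2$, as claimed.

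The only mildly delicate point — the step I would flag as needing care — is the interchange of "differentiate the tangential constraint" and "read off the normal/tangential decomposition of $\nabla g$": one must be sure that the radial extension $\nu(x)=x$ is the right object and that $\nabla u(x_0)$ genuinely has no normal component (so that pairing with $\nabla g(x_0)$ kills the parallel-to-$x_0$ part entirely). Both are immediate from $B$ being the unit ball centred at the origin and from the Neumann condition, so there is no real obstacle; the argument is essentially a one-line computation once the correct identity $\nabla(\nabla u\cdot x)=D^2u\cdot x+\nabla u$ is written down. No compactness, regularity, or PDE input beyond $u\in C^2(\overline B)$ is needed.
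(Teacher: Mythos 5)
Your argument is correct, and it is a genuinely different (and somewhat cleaner) route than the one in the paper. The paper proves the identity by passing to spherical coordinates, rotating so that $x_0=e_1$ and $\nabla u(x_0)$ points along $-e_N$, writing $\nabla u=\frac{1}{r}u_\theta e_\theta+u_r e_r+\sum_i u^{\phi_i}e_{\phi_i}$, and differentiating in $r$: the Neumann condition gives $u_{r\theta}=0$ on $\partial B$, and the factor $\frac{\partial}{\partial r}\bigl(\frac{1}{r}\bigr)=-\frac{1}{r^2}$ produces the $-|\nabla u(x_0)|^2$. You instead stay in Cartesian coordinates, observe that $g(x)=\nabla u(x)\cdot x$ vanishes identically on $\partial B$ so that $\nabla g(x_0)=D^2u(x_0)x_0+\nabla u(x_0)$ is parallel to $\nu(x_0)=x_0$, and then pair with the tangential vector $\nabla u(x_0)$; together with $\partial_\nu\nabla u(x_0)=D^2u(x_0)x_0$ (symmetry of the Hessian) this gives the identity at once, with no normalization of the point or of the gradient direction. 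Both proofs are really tangential differentiations of the Neumann constraint, but your formulation avoids the coordinate bookkeeping and makes transparent where the curvature of $\partial B$ enters (through the extra $\nabla u$ term coming from $\nabla(\nu)=\mathrm{Id}$); it also generalizes verbatim to an arbitrary smooth domain, yielding $\nabla u\cdot\partial_\nu\nabla u=-\mathrm{II}(\nabla u,\nabla u)$ with $\mathrm{II}$ the second fundamental form, which is exactly the Casten--Holland identity the paper cites as the source of this special case. The only point to state carefully is the one you flagged: since $u\in C^2(\overline B)$, the function $g$ is $C^1$ up to the boundary, so differentiating $g\circ\gamma\equiv 0$ along $C^1$ curves in $\partial B$ is legitimate and indeed needs no second derivatives of $\gamma$ (your momentary worry about $\gamma''$ is vacuous, as only one $t$-derivative is taken).
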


This identity is known, but it seems to be a new ingredient in the present context. It is a special case of an identity used in \cite[Proof of Theorem 2]{casten.holland:1978}. We give a short proof for the convenience of the reader. 

\begin{altproof}{{\rm\ Lemma~\ref{sec:strict-monot-axial-2}}}
Let $r=|x|$, $\theta=\arccos\bigl(\frac{x}{|x|}\cdot e_N\bigr)$, and let $\phi_i$, for $i=1,\ldots,N-2$, be
the other $N-2$ spherical angles for $x \in \overline B$.
We denote by $e_r$, $e_\theta$ and $e_{\phi_i}$
the orthogonal vectors $\frac{\partial}{\partial r}$, $\frac{\partial}{\partial\theta}$ and
$\frac{\partial}{\partial\phi_i}$, respectively.
In this proof we designate by the same letter
functions written in spherical coordinates
and Cartesian coordinates.
Without loss of generality, we suppose $x_0=e_1=(1,0,\ldots,0)$ and
$$\nabla u(x_0)=-\frac{\partial u}{\partial\theta}(x_0)e_N,$$
with $e_N=(0,\ldots,0,1)$.
For $x$ close to $x_0$ and for some functions $u^{\phi_i}$,
$$\nabla u=\frac{1}{r}\frac{\partial u}{\partial\theta}e_\theta+
\frac{\partial u}{\partial r}e_r+\sum_{i=1}^{N-2}u^{\phi_i}e_{\phi_i}.$$
Since by hypothesis $\frac{\partial u}{\partial r}=0$ on $\partial B$, $\frac{\partial^2u}{\partial r\partial\theta}=\frac{\partial^2u}{\partial \theta \partial r}=0$
on $\partial B$.  
Also, $\frac{\partial e_\theta}{\partial r}=\frac{\partial e_r}{\partial r}=
\frac{\partial e_{\phi_i}}{\partial r}=0$. 
Therefore,
$$\left.\nabla u\cdot \frac{\partial }{\partial \nu}\nabla u\right|_{x=x_0} =
\left(-\frac{\partial u}{\partial\theta}(x_0)e_N\right)\cdot
\left.\left(-\frac{1}{r^2}\frac{\partial u}{\partial\theta}e_\theta\right)\right|_{x=x_0}=
-\left(\frac{\partial u}{\partial\theta}(x_0)\right)^2=-|\nabla u(x_0)|^2,$$
as $e_\theta(x_0)=-e_N$.
\end{altproof}

Next we reformulate the minimization problem (\ref{eq:12}) on the unit ball $B$ by introducing a non-homogeneous auxiliary functional. It is convenient to endow the space $\hza=\left\{u\in H^1(B):\textstyle\int_Bu=0\right\}$ with the inner product 
\mbox{$(u,v)_{H^1}=\int_B\nabla u\nabla v$}. We fix 
$p \in (2,2^*]$ for $N\ge 3$, $p>2$ for $N=2$.
We consider the 
$C^2$-functional 
$$
G:\hza\to\Rb,\qquad G(u):=\textstyle\frac{1}{2}\|\nabla u\|_2^2-\frac{1}{p}\|u\|_p^p.
$$
Note that
$$
G'(u)v= \int_B \nabla u \nabla v - \int_B |u|^{p-2}u v \quad \text{and}\quad G''(u)(v,w)= \int_B \nabla v \nabla w - (p-1)
\int_B |u|^{p-2}vw  
$$
for $u,v,w \in \hza$. Thus, a critical point $u \in \hza$ of $G$ is a weak (and therefore $C^{3,\alpha}$) solution of the problem 
\begin{equation}
\label{eq:14}
-\Delta u=|u|^{p-2}u+\mu\ {\rm in}\ B,\quad
\textstyle\frac{\partial u}
{\partial\nu}=0\ {\rm on}\ \partial B.
\end{equation}
with $\mu= -(1/|B|)\int_{B}|u|^{p-2}u$. We consider the {\em Nehari 
manifold}
$$
{\cal N}=\{u\in\hza\setminus\{0\}:\|\nabla u\|_2^2=\|u\|_p^p\}
=\{u\in\hza\setminus\{0\}: G'(u)u = 0 \}.
$$
We recall that $\cN$ is a $C^2$-manifold of codimension one in $\hza$ 
whose tangent space at a point $u \in \cN$ is given by
$$
T_u \cN = \left\{v \in \hza \::\: 2(u,v)_{H^1}-p \int_B |u|^{p-2}u v =0\right\}.
$$
The following lemma is proved by direct computation.

\begin{Lem}
\label{sec:strict-monot-axial-4}
$u \in \hza\setminus\{0\}$ is a 
minimizer of {\rm (\ref{eq:12})} on $B$ if and only if $\bigl(\|\nabla u\|_2^2\,/\,\|u\|_p^p\bigr)^{\frac{1}{p-2}}u \in \cN$ is a minimizer  of the  restriction $G|_\cN$ of $G$ to $\cN$. 
\end{Lem}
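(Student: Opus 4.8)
The plan is to exploit the fact that both problems share the same constraint manifold structure, just rescaled. First I would observe that the quotient $Q(u) := \|\nabla u\|_2^2 / \|u\|_p^2$ is invariant under scaling $u \mapsto tu$ for $t > 0$, so minimizing $Q$ over $\hza \setminus \{0\}$ is the same as minimizing $Q$ over $\cN$. Thus the minimizers of (\ref{eq:12}) are exactly the points of $\cN$ (up to scaling) where $Q$ attains its infimum, namely $\Lp$. So the first step is: given any $u \in \hza \setminus \{0\}$ that minimizes (\ref{eq:12}), set $t = \bigl(\|\nabla u\|_2^2 / \|u\|_p^p\bigr)^{1/(p-2)}$ and check by direct computation that $tu \in \cN$, i.e. that $\|\nabla(tu)\|_2^2 = \|tu\|_p^p$. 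This is the routine calculation $t^2 \|\nabla u\|_2^2 = t^p \|u\|_p^p$, which holds precisely because of the choice of $t$ (using $p - 2 \ne 0$).

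Next I would compute $G$ restricted to $\cN$. For $v \in \cN$ we have $\|\nabla v\|_2^2 = \|v\|_p^p$, so $G(v) = \frac12 \|v\|_p^p - \frac1p \|v\|_p^p = \left(\frac12 - \frac1p\right)\|v\|_p^p = \frac{p-2}{2p}\|v\|_p^p$. Hence on $\cN$, minimizing $G$ is equivalent to minimizing $\|v\|_p^p$, which (again using $\|\nabla v\|_2^2 = \|v\|_p^p$ on $\cN$) equals $\|\nabla v\|_2^2$. So $G|_\cN(v) = \frac{p-2}{2p}\|\nabla v\|_2^2 = \frac{p-2}{2p}\, Q(v)\,\|v\|_p^2$, and on $\cN$ one also has $\|v\|_p^2 = \bigl(\|\nabla v\|_2^2\bigr)^{2/p}$, so everything is a monotone function of $Q(v)$ alone. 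The cleanest way to phrase it: for $v \in \cN$, $G(v) = \frac{p-2}{2p} Q(v)^{p/(p-2)}$, since $\|v\|_p^p = \|\nabla v\|_2^2 = Q(v)\|v\|_p^2$ gives $\|v\|_p^{p-2} = Q(v)$, hence $\|v\|_p^p = Q(v)^{p/(p-2)}$.

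Then I would combine the two directions. If $u$ minimizes (\ref{eq:12}) with value $\Lp$, then $tu \in \cN$ and $Q(tu) = Q(u) = \Lp$, so $G(tu) = \frac{p-2}{2p}\Lp^{p/(p-2)}$; and since every $v \in \cN$ satisfies $Q(v) \ge \Lp$ and $G(v) = \frac{p-2}{2p}Q(v)^{p/(p-2)}$ with $x \mapsto x^{p/(p-2)}$ strictly increasing on $(0,\infty)$, we get $G(v) \ge G(tu)$ for all $v \in \cN$, i.e. $tu$ minimizes $G|_\cN$. Conversely, if $tu \in \cN$ minimizes $G|_\cN$, then $Q(tu)$ is the minimum of $Q$ over $\cN$, which equals its minimum over all of $\hza \setminus \{0\}$ by scaling invariance, so $u$ (and $tu$) minimize (\ref{eq:12}). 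This is really the whole argument.

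I do not expect a genuine obstacle here: the statement is essentially bookkeeping about rescaling, and the lemma is flagged in the paper as ``proved by direct computation.'' The only points requiring a little care are that $p > 2$ (so the exponent $1/(p-2)$ and the map $x \mapsto x^{p/(p-2)}$ are well-defined and strictly monotone) and that the scaling factor is positive, so that $tu$ lies in the correct half of $\cN$; both are guaranteed by the standing assumption $p \in (2, 2^*]$ (or $p > 2$ when $N = 2$). One should also note in passing that $\cN \ne \emptyset$ and that the infimum in (\ref{eq:12}) is attained (Proposition~\ref{sec:exist-minimz-crit-1} and the subcritical case), so the equivalence is not vacuous, though strictly this is not needed for the stated ``if and only if.''
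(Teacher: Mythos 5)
Your argument is correct and is exactly the direct computation the paper has in mind (the paper omits the proof, noting only that the lemma "is proved by direct computation"): the scaling $t=\bigl(\|\nabla u\|_2^2/\|u\|_p^p\bigr)^{1/(p-2)}$ places $u$ on $\cN$, and on $\cN$ one has $G(v)=\frac{p-2}{2p}\,Q(v)^{p/(p-2)}$ with $Q(v)=\|\nabla v\|_2^2/\|v\|_p^2$, so minimizing $G|_\cN$ is equivalent, via the scaling invariance of $Q$, to minimizing the quotient in (\ref{eq:12}). No gaps; the remarks about $p>2$ and nonemptiness are appropriate.
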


So in order to prove parts (b) and (c) of Theorem~\ref{sec:introduction-1},
in the case that $p>2$, it suffices to consider minimizers of 
$G|_{\cN}$. 

\begin{Lem}
\label{sec:strict-monot-axial-3}  
Let $u \in \cN$ be a minimizer of $G|_{\cN}$. Then
\begin{itemize}
\item[{\rm (a)}] $T_u\cN= \{v \in \hza\::\: (u,v)_{H^1}=0\}$. Moreover, $u$ is a critical point of $G$ and hence a solution of {\rm (\ref{eq:14})}.
\item[{\rm (b)}] $G''(u)(v,v) \ge 0$ for $v \in T_u\cN$.
\item[{\rm (c)}] If $v \in T_u\cN \setminus \{0\}$ satisfies $G''(u)(v,v)=0$, then 
$v$ is a solution of
\begin{equation}
\label{eq:38}
-\Delta v= (p-1)\bigl(|u|^{p-2}v+\hat \mu\bigr) \quad \text{in $B$},\qquad 
\frac{\partial v}{\partial \nu}=0 \quad \text{on $\partial B$,}
\end{equation}   
with
$\hat \mu=\hat \mu(u,v)= -(1/|B|)\int_{B}|u|^{p-2}v$.\\ 
If, in addition, $\int_{B}|u|^{p-2}v=0$, then 
$v$ has at most three nodal domains.
\end{itemize}
\end{Lem}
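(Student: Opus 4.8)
The plan is to prove the three assertions of Lemma~\ref{sec:strict-monot-axial-3} in order, exploiting that $u$ minimizes $G$ on the codimension-one manifold $\cN$.

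For part (a), the key observation is that on $\cN$ one has $G(u)=\frac{1}{2}\|\nabla u\|_2^2-\frac{1}{p}\|u\|_p^p=(\frac12-\frac1p)\|\nabla u\|_2^2$, so $G|_\cN$ is a positive multiple of $u\mapsto\|\nabla u\|_2^2$ there. By the Lagrange multiplier rule, a minimizer $u$ of $G|_\cN$ satisfies $G'(u)=\lambda\Phi'(u)$ where $\Phi(u)=G'(u)u=\|\nabla u\|_2^2-\|u\|_p^p$ is the constraint function; I would compute $\Phi'(u)v=2(u,v)_{H^1}-p\int_B|u|^{p-2}uv$ and note $\Phi'(u)u=2\|\nabla u\|_2^2-p\|u\|_p^p=(2-p)\|\nabla u\|_2^2\neq0$ since $p>2$ and $u\neq0$, so $\cN$ is a genuine manifold near $u$ and the multiplier exists. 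Testing $G'(u)=\lambda\Phi'(u)$ against $u$ gives $0=G'(u)u=\lambda\Phi'(u)u=\lambda(2-p)\|\nabla u\|_2^2$, whence $\lambda=0$; therefore $G'(u)=0$, i.e.\ $u$ solves (\ref{eq:14}), and $T_u\cN=\ker\Phi'(u)$. Finally, since $G'(u)=0$ means $(u,v)_{H^1}=\int_B|u|^{p-2}uv$ for all $v$, the defining relation $2(u,v)_{H^1}-p\int_B|u|^{p-2}uv=0$ for $v\in T_u\cN$ reduces to $(2-p)(u,v)_{H^1}=0$, i.e.\ $(u,v)_{H^1}=0$, giving the stated description of $T_u\cN$.

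For part (b), I would use that $u$ is a constrained minimizer together with the fact that the Nehari manifold is the natural place where second-order conditions become unconstrained in the relevant direction. Concretely, for $v\in T_u\cN$ with $v\neq0$, since $\Phi'(u)u\neq0$ one can, by the implicit function theorem, find a $C^2$ curve $t\mapsto \gamma(t)\in\cN$ with $\gamma(0)=u$, $\gamma'(0)=v$; then $t\mapsto G(\gamma(t))$ has a minimum at $t=0$, so $\frac{d^2}{dt^2}\big|_{t=0}G(\gamma(t))\ge0$. Expanding this and using $G'(u)=0$ (so the term involving $\gamma''(0)$ drops out) yields $G''(u)(v,v)\ge0$. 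This is routine once part (a) is in hand.

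For part (c), suppose $v\in T_u\cN\setminus\{0\}$ with $G''(u)(v,v)=0$; then $v$ minimizes the quadratic form $w\mapsto G''(u)(w,w)$ over $T_u\cN$, so by Lagrange multipliers applied to this quadratic minimization there are multipliers $\alpha,\beta$ with $G''(u)(v,\cdot)=\alpha(u,\cdot)_{H^1}+\beta\big(2(u,\cdot)_{H^1}-p\int_B|u|^{p-2}u\,\cdot\big)$; absorbing, $\int_B\nabla v\nabla w-(p-1)\int_B|u|^{p-2}vw=c_1\int_B\nabla u\nabla w+c_2\int_B|u|^{p-2}uw$ for all $w\in\hza$. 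Testing with $w=v$ and $w=u$, and using $(u,v)_{H^1}=0$, $G''(u)(v,v)=0$, $u\in\cN$, and the equation (\ref{eq:14}) for $u$, I expect the constants $c_1,c_2$ to vanish (the computation uses $\int_B|u|^{p-2}uv=(u,v)_{H^1}=0$ on the relevant terms); this leaves, in weak form, $-\Delta v=(p-1)|u|^{p-2}v+\text{const}$ with homogeneous Neumann data, and testing against the constant function $1$ forces the constant to be $(p-1)\hat\mu$ with $\hat\mu=-(1/|B|)\int_B|u|^{p-2}v$, giving (\ref{eq:38}). For the nodal domain count: if additionally $\int_B|u|^{p-2}v=0$ then $\hat\mu=0$, so $v$ solves the homogeneous linear Neumann eigenvalue-type equation $-\Delta v=(p-1)|u|^{p-2}v$, and $G''(u)(v,v)=0$ says $v$ is a second-order critical direction; a standard argument — if $v$ had $\ge4$ nodal domains one could build from three of them a function $\tilde v\in T_u\cN$ (using the two free parameters to enforce the two linear constraints $(u,\tilde v)_{H^1}=0$ and, automatically, $\int_B|u|^{p-2}\tilde v=0$) with $G''(u)(\tilde v,\tilde v)<0$, contradicting part (b) — shows $v$ has at most three nodal domains.

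The main obstacle I anticipate is the bookkeeping in part (c): verifying that the Lagrange multipliers for the quadratic minimization genuinely vanish (this is where the hypothesis $\int_B|u|^{p-2}v=0$ and the identity $(u,v)_{H^1}=0$ must be used carefully, so that $v$ itself is an admissible competitor with zero second variation), and then setting up the nodal-domain argument so that the truncated test function lands in $T_u\cN$ with the correct number of free parameters to kill the constraint while keeping $G''(u)(\tilde v,\tilde v)<0$. Everything else is a direct application of the Lagrange multiplier rule and the second-order minimality condition established in parts (a) and (b).
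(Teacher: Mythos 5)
Parts (a) and (b) of your proposal are correct and essentially identical to the paper's argument (the paper streamlines the multiplier step by computing $pG'(u)v=(p-2)(u,v)_{H^1}$ directly on $T_u\cN$, but the content is the same), and the first half of (c) — extending $G''(u)(v,\cdot)=0$ from $T_u\cN$ to all of $\hza$ by checking it against $u$, then recovering the constant $(p-1)\hat\mu$ — also matches the paper, even though your two Lagrange multipliers $c_1,c_2$ are individually undetermined (the two constraint functionals are proportional once $G'(u)=0$); only their combination must vanish, which your test with $w=u$ does give.

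The genuine gap is in the nodal-domain bound. You propose to build, from three nodal domains of $v$, a combination $\tilde v\in T_u\cN$ with $G''(u)(\tilde v,\tilde v)<0$ and contradict (b). This strict inequality cannot be achieved: since $\hat\mu=0$, testing $-\Delta v=(p-1)|u|^{p-2}v$ with each truncation $v_i=v\chi_{\Omega_i}$ (which lies in $H^1(B)$ by \cite[Lemma 1]{MP85}, with no boundary contribution) gives $\int_B|\nabla v_i|^2-(p-1)\int_B|u|^{p-2}v_i^2=0$ exactly, so by disjointness of supports every linear combination of truncations satisfies $G''(u)(\tilde v,\tilde v)=0$, never $<0$, and there is no contradiction with (b). The paper's argument instead uses this equality: choosing $\bar v=\sum_{i=1}^3\alpha_i v_i$ nontrivial with the two constraints $\int_B\bar v=0$ (membership in $\hza$ — note this, not $\int_B|u|^{p-2}\bar v=0$, is the second constraint you must enforce, and the latter is neither needed nor automatic) and $(\bar v,u)_{H^1}=0$, one gets $\bar v\in T_u\cN$ with $G''(u)(\bar v,\bar v)=0$, hence by the first part of (c) $\bar v$ itself solves (\ref{eq:38}); since $\bar v\equiv 0$ on the nonempty open set $\{v\neq0\}\setminus(\Omega_1\cup\Omega_2\cup\Omega_3)$, necessarily $\hat\mu(u,\bar v)=0$ there, and the contradiction comes from the weak unique continuation property for $-\Delta-(p-1)|u|^{p-2}$ (see \cite[p.~519]{simon}). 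This unique continuation step is the indispensable ingredient missing from your sketch; without it the argument does not close.
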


\begin{proof}
(a) Since $u$ is a critical point of $G|_\cN$, we have for $v \in T_u\cN$
$$
0=pG'(u)v=p(u,v)_{H^1}-p\int_B |u|^{p-2}u v= 
(p-2)(u,v)_{H^1}.
$$
Hence $T_u\cN= \{v \in \hza\::\: (u,v)_{H^1}=0\}$. Since furthermore $G'(u)u=0$ by the definition of $\cN$, we conclude that $G'(u)v= 0$ for all $v \in \hza$, and thus $u$ is a critical point of $G$.\\
(b) Let $v \in T_u\cN$, and let $\rho:(-\eps,\eps) \to \cN$ be a $C^2$-curve with $\rho(0)=u$ and $\rho'(0)=v$. Then 
$$
\frac{\partial }{\partial s}G(\rho(s))\Big|_{s=0}=G'(\rho(s))\rho'(s)\Big|_{s=0}=G'(u)v=0
$$
and
\begin{align*}
0 &\le \frac{\partial^2 }{\partial s^2}G(\rho(s))\Big|_{s=0}=
\Bigl(G''(\rho(s))(\rho'(s),\rho'(s))+G'(\rho(s))\rho''(s)\Bigr) \Big|_{s=0}\\
&= G''(u)(v,v)+G'(u)\rho''(0)= G''(u)(v,v),
\end{align*}
since $G'(u)=0$ by (a).\\
(c) Consider the quadratic functional $\phi: T_u\cN \to \R, \:\phi(v)= \frac{1}{2}G''(u)(v,v)$, and consider $v \in T_u\cN$ 
with $G''(u)(v,v)=0$. By (b), $v$ is a minimizer of $\phi$, so that for all $w \in T_u\cN$
$$
0=\phi'(v)w= G''(u)(v,w).
$$
Moreover,
\begin{align*}
G''(u)(v,u)= G''(u)(u,v)&= (u,v)_{H^1} -(p-1)\int_B|u|^{p-2}uv\\
&=(2-p)(u,v)_{H^1}+(p-1)G'(u)v\\
&=(2-p)(u,v)_{H^1}=0.
\end{align*}
We conclude that $G''(u)(v,w)=0$ for all $w \in \hza$, 
hence $v$ is a weak solution of (\ref{eq:38}). By elliptic regularity, $v \in C^{3,\alpha}(\overline B)$ for some $\alpha>0$.
It remains to show that, if \mbox{$\hat \mu(u,v)=0$}, then $v$ has at most three nodal domains. Suppose by contradiction that $v$ has three nodal domains $\Omega_1,\Omega_2,\Omega_3$ such that 
$\{x \in B \::\: v(x) \not= 0\} \setminus (\Omega_1 \cup 
\Omega_2 \cup \Omega_3)$ is a nonempty open set. Let $v_i:= v \chi_{\Omega_i}$, $i=1,2,3$. Then $v_i \in H^1(B)$  
by \cite[Lemma 1]{MP85}. Since $v_1,v_2,v_3$ are linearly independent functions, a suitable nontrivial linear combination 
$\bar v= \sum_{i=1}^3\alpha_i v_i$ satisfies 
$\int_B \bar v=0$ and $(\bar v,\u)_{H^1}= 0$, that is, ${\bar v}\in 
T_{\u}\cN$. Hence $G''(\u)(\bar v,\bar v) \ge 0$ by (b). 
On the other hand, by the disjointness of supports
\begin{align*}
G''(\u)(\bar v,\bar v)&=\int_B |\nabla \bar v|^2 -(p-1)\int_B |u|^{p-2}{\bar v}^2 \\
&= \sum_{i=1}^3 \alpha_i^2\left( \int_B |\nabla v_i|^2 -(p-1)\int_B |u|^{p-2}{v_i}^2 \right) \\
&= \sum_{i=1}^3 \alpha_i^2 \left(\int_B \nabla v \nabla  v_i  -(p-1)\int_B |u|^{p-2}v v_i \right) \\
&= (p-1)\hat \mu(u,v) \sum_{i=1}^3 \alpha_i^2 \int_B v_i = 0,
\end{align*}
so that $\bar v$ is also a minimizer of $\phi$ and hence a solution of (\ref{eq:38}). Since $\bar v \equiv 0$ on the nonempty open set $\{x \in B \::\: v(x) \not= 0\} \setminus (\Omega_1 \cup 
\Omega_2 \cup \Omega_3)$, we conclude that $\bar v$ solves (\ref{eq:38}) with $\hat \mu(u,\bar v)=0$. We now have come to a contradiction to the fact 
that solutions of (\ref{eq:38}) with $\hat \mu=0$ have the weak unique continuation property (see e.g. \cite[p.~519]{simon}).  
\end{proof}

\begin{Prop}
\label{sec:strict-monot-axial-1}
Let $\u \in \cN$ be a minimizer of $G|_\cN$. Then
\begin{itemize}
\item[{\rm (a)}] There exists a unit vector $e \in \R^N,\:|e|=1$ such that $\u(x)$ only 
depends on $r=|x|$ and $\theta:= \arccos\bigl(\frac{x}{|x|}\cdot e\bigr)$, and $\frac{\partial \u}{\partial \theta}(r,\theta)<0$ for 
$0<r \le 1,\:0<\theta<\pi$.  
\item[{\rm (b)}] If $\partial_e$ denotes the directional derivative in the direction of $e$, then \mbox{$\partial_e \u > 0$} 
on $\overline{B} \setminus \{\pm e\}$.
\item[{\rm (c)}] If $\tau$ is another unit vector in $\R^N$ orthogonal to $e$, then $\partial_\tau u$ has precisely
four nodal domains.
\end{itemize}
\end{Prop}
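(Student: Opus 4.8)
I would treat the three parts in order, deducing (a) from the symmetry result of the previous section and then bootstrapping to (b) and (c).

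\emph{Part (a).} Since the quotient in (\ref{eq:12}) is scale invariant, a minimizer $\u\in\cN$ of $G|_\cN$ is in particular a minimizer of (\ref{eq:12}) (Lemma~\ref{sec:strict-monot-axial-4}), so Proposition~\ref{sec:axial-symm-minim-1} applies: $\u$ is foliated Schwarz symmetric and either radial or strictly decreasing in $\theta$. The only thing left is to rule out the radial case. I would argue by contradiction: if $\u=\u(r)$, set $v:=\partial_{x_1}\u=\u'(r)\,x_1/|x|$. Using that $\u$ is even and $v$ is odd in $x_1$, one checks $\int_B v=0$, $(\u,v)_{H^1}=0$ and $\int_B|\u|^{p-2}v=0$, so $v\in T_\u\cN$; and $v\not\equiv 0$ since $\u$ is nonconstant. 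Differentiating (\ref{eq:14}) gives $-\Delta v=(p-1)|\u|^{p-2}v$ in $B$, while $\u'(1)=0$ forces $v=0$ on $\partial B$; integrating by parts therefore yields $G''(\u)(v,v)=\int_{\partial B}v\,\partial_\nu v=0$. By Lemma~\ref{sec:strict-monot-axial-3}(c), $v$ then solves the full Neumann problem (\ref{eq:38}), in particular $\partial_\nu v=0$ on $\partial B$; hence $v$ has vanishing Cauchy data on $\partial B$, and extending $v$ by zero and invoking the unique continuation property (\cite[p.~519]{simon}) gives $v\equiv 0$, a contradiction. Thus $\u$ is not radial and (a) follows.

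\emph{Set-up for (b) and (c).} Fix $e=e_N$ and write $\rho:=(x_1^2+\dots+x_{N-1}^2)^{1/2}$ for the distance to the axis. Each $v_i:=\partial_{x_i}\u$ solves the linearized equation $-\Delta v_i=(p-1)|\u|^{p-2}v_i$ in $B$, but not the homogeneous Neumann condition: by Lemma~\ref{sec:strict-monot-axial-2} (more precisely the coordinate computation in its proof) one gets $\partial_\nu v_i=-v_i+(\partial_r^2\u)\,x_i/|x|$ on $\partial B$. From (a) and $\partial_e\u=\partial_{x_N}\u=\cos\theta\,\partial_r\u-\tfrac{\sin\theta}{r}\partial_\theta\u$ I read off that $\partial_e\u>0$ on the subregion $\{\cos\theta\,\partial_r\u\ge 0\}$; in particular $\partial_e\u=-\sin\theta\,\partial_\theta\u>0$ on $\partial B\setminus\{\pm e\}$ and on the equatorial section $\{x\cdot e=0\}$. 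Likewise $\partial_\tau\u=\tfrac{x\cdot\tau}{\rho}\,\partial_\rho\u$, and on $\partial B$ one has $\partial_\rho\u=\cos\theta\,\partial_\theta\u$, so there $\partial_\rho\u<0$ where $x\cdot e>0$, $\partial_\rho\u>0$ where $x\cdot e<0$, and $\partial_\rho\u=0$ only on the equatorial sphere and at the poles. In particular $\partial_\tau\u$ has at least four nodal domains, four of which meet $\partial B$.

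\emph{Part (b).} To promote positivity of $\partial_e\u$ from the above subregions to all of $\overline B\setminus\{\pm e\}$ I would run the moving plane method with the hyperplanes $T_\lambda=\{x\cdot e=\lambda\}$. With $\u^\lambda$ the reflection of $\u$ in $T_\lambda$, the difference $w_\lambda:=\u-\u^\lambda$ satisfies $-\Delta w_\lambda=c_\lambda w_\lambda$ in the cap $B\cap\{x\cdot e>\lambda\}$ with $0\le c_\lambda\in L^\infty$ (by monotonicity of $t\mapsto|t|^{p-2}t$), vanishes on the flat face $T_\lambda\cap B$, and on the spherical part of the cap's boundary carries inhomogeneous Neumann data produced by $\u^\lambda$. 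Controlling these data — which, unlike in the Dirichlet case, do not vanish — is the heart of the matter: here I would use the positivity of $\partial_e\u$ near $\partial B$ established above together with the sharpened Hopf boundary lemma for half-balls proved in the appendix (Lemma~\ref{sec:appendix-1}). Starting from $\lambda$ near $1$ and letting $\lambda$ decrease (and symmetrically from $\lambda$ near $-1$), the maximum principle and this Hopf lemma should prevent the procedure from stopping before $\lambda=0$; hence $\u$ is strictly increasing along $e$ throughout $B$, and a final application of the strong maximum principle and the Hopf lemma to the equation for $\partial_e\u$ yields $\partial_e\u>0$ on $\overline B\setminus\{\pm e\}$.

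\emph{Part (c) and the main obstacle.} By the boundary analysis, exactly four nodal domains of $\partial_\tau\u$ meet $\partial B$ (two in each half-sphere $\{\pm\,x\cdot\tau>0\}$, separated by the equator), so it remains to exclude any further nodal domain $\Omega_1$ with $\overline{\Omega_1}\subset B$. Replacing $\Omega_1$ by its mirror image if necessary, assume $\Omega_1\subset\{x\cdot\tau>0\}$; let $\sigma$ be the reflection in $\{x\cdot\tau=0\}$, set $\Omega_2:=\sigma(\Omega_1)$ and $v_j:=(\partial_\tau\u)\chi_{\Omega_j}\in H^1(B)$ (\cite[Lemma~1]{MP85}). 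Since $\partial_\tau\u$ vanishes on $\partial\Omega_j\subset B$ and solves the linearized equation, integration by parts gives $G''(\u)(v_j,v_j)=0$; and since $\u$ is $\sigma$-invariant while $\partial_\tau\u$ is $\sigma$-odd, $\bar v:=v_1+v_2=v_1-v_1\circ\sigma$ lies in $T_\u\cN\setminus\{0\}$ and, by disjointness of supports, $G''(\u)(\bar v,\bar v)=0$. By Lemma~\ref{sec:strict-monot-axial-3}(c), $\bar v$ then solves (\ref{eq:38}); evaluating that equation on the nonempty open set $B\setminus(\overline{\Omega_1}\cup\overline{\Omega_2})$, where $\bar v\equiv 0$, gives $\hat\mu(\u,\bar v)=0$, so $\bar v$ vanishes on an open set and solves (\ref{eq:38}) with $\hat\mu=0$; unique continuation then forces $\bar v\equiv 0$, a contradiction. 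Hence $\partial_\tau\u$ has exactly four nodal domains. I expect part (b) to be the genuine obstacle: making the moving plane scheme rigorous despite the nonvanishing Neumann data on the moving caps — precisely the difficulty that necessitates the half-ball version of the Hopf lemma in the appendix — and verifying carefully that the scheme runs all the way down to the equator.
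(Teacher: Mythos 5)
Parts (a) and (c) of your proposal are essentially correct and run along the paper's own lines: in (a) you replace the paper's Hopf-lemma contradiction at the point $e_i$ by a vanishing-Cauchy-data/extension-by-zero/unique-continuation argument (valid, since oddness gives $\hat\mu=0$ in (\ref{eq:38})), and in (c) your reflection of an interior nodal domain, the test function $\bar v=v_1+v_2\in T_u\cN$, $G''(u)(\bar v,\bar v)=0$, Lemma~\ref{sec:strict-monot-axial-3}(c) and unique continuation is precisely the paper's argument. The genuine gap is part (b), which you yourself flag as "the heart of the matter" but do not close, and your proposed route would not close it. A moving plane started at $\lambda$ near $1$ cannot get off the ground here: by the Neumann condition $\partial_e u(\pm e)=0$, so there is no positivity of $\partial_e u$ available near the poles (only on $\partial B\setminus\{\pm e\}$ and on the equatorial section), and the inhomogeneous Neumann data on the spherical part of a cap, $\partial_\nu w_\lambda=-\partial_\nu u^\lambda$, involve $\nabla u$ at reflected \emph{interior} points about which nothing is known at this stage; Lemma~\ref{sec:appendix-1} is of no help there, since it requires homogeneous Neumann data on the curved part.

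What is missing is a variational input \emph{before} any moving plane, and this is the actual core of the paper's proof of (b). First, combining (\ref{eq:15}) with the Casten--Holland identity of Lemma~\ref{sec:strict-monot-axial-2} (which your sketch never uses) and the dichotomy (\ref{eq:1}) for $u_{x_i}$, $i\le N-1$, one obtains the strict inequality $\int_{\partial B}\frac{\partial u_{x_N}}{\partial\nu}\,u_{x_N}<0$. Second, a three-function test built from $u_{x_N}\chi_{\Omega_1}$, $u_{x_N}\chi_{\Omega_2}$, $u_{x_N}^+$ in $T_u\cN$, together with Lemma~\ref{sec:strict-monot-axial-3} and unique continuation, shows that $\Omega_-=\{u_{x_N}<0\}$ is connected; since it misses $\partial B$ and the hyperplane $\{x_N=0\}$, it lies in one open half-ball, so $u_{x_N}>0$ on the other half. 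Only then does a moving plane run, and in the opposite direction to yours: from the equator $\lambda=0$ (where $w_0>0$ is free from the angular monotonicity of (a)) upward toward the pole, the already established positivity of $u_{x_N}$ below the moving level being exactly what gives $\frac{\partial w_\lambda}{\partial x_N}>0$ on $S_\lambda$, with the corner information supplied by (\ref{eq:21}). Without the boundary identity and the connectedness step you have no control of the reflected values and no admissible starting position, so your scheme for (b) cannot be made rigorous as proposed.
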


\begin{proof}
(a) Applying Proposition \ref{sec:axial-symm-minim-1} and rotating the coordinate system if necessary, we may assume that $\u$ 
is axially symmetric about the $x_N$-axis and $u(e)\geq u(-e)$, where $e:=e_N=(0,\dots,0,1)$. Moreover, either $u$ is radially symmetric, 
or  $\frac{\partial \u}{\partial \theta}(r,\theta)<0$ for 
$0<r \le 1,\:0<\theta<\pi$, where $r=|x|$ and $\theta$ is the 
angle formed by $\frac{x}{|x|}$ and $e$. We 
consider the partial derivatives 
$\ux \in C^{2,\alpha}({\overline B})$, $i=1,\ldots, N$. 
By differentiating (\ref{eq:14}) we observe that 
\begin{equation}
\label{eq:16}
-\Delta \ux=(p-1)|\u|^{p-2}\ux\quad{\rm in}\ B,\qquad i=1,...,N.
\end{equation}
Moreover, $\ux \in \hza$ and 
$(\ux,\u)_{H^1}=0= \int_B|u|^{p-2}\ux$ for $i=1,\ldots,N-1$,
since $\ux$ is antisymmetric with respect to the hyperplane 
$\{x\in\R^N:\:x_i=0\}$. Hence $\ux \in T_{\u}\cN$ and $G''(\u)(\ux,\ux) \ge 0$ for $i=1,\ldots,N-1$ by Lemma~\ref{sec:strict-monot-axial-3}(a),(b). We 
claim that, for $i=1,...,N-1$, 
\begin{equation}
\label{eq:1}
\text{either}\quad \ux \equiv 0\;\text{ on $B$}\qquad \text{or}\qquad G''(\u)(\ux,\ux) > 0.
\end{equation}
Indeed, suppose by contradiction that $\ux \not\equiv 0$ and
\mbox{$G''(\u)(\ux,\ux)= 0$} for some $i \in \{1,\dots,N-1\}$.
Then, since $\int_B|u|^{p-2}u_{x_i}=0$, Lemma~\ref{sec:strict-monot-axial-3}(c) implies that $\ux$ satisfies (\ref{eq:16}) together with the boundary condition
\begin{equation}
\label{eq:18}
\frac{\partial\ux}{\partial\nu}=0\qquad \text{on}\quad 
\partial B,
\end{equation}
and that $\ux$ has at most three nodal domains. It 
has precisely two nodal domains because it is antisymmetric 
with respect to the hyperplane $\{x\in\R^N:\:x_i=0\}$. 
We may assume that $\ux>0$ in the open 
half-ball $B^i_+:=\{x \in B\::\:x_i>0\}$. 
The homogeneous Neumann boundary condition
for $\u$ implies that $\ux(e_i)=0$, 
where $e_i$ is the $i$-th coordinate vector.  
Since 
$$-\Delta \ux=(p-1)|\u|^{p-2}\ux \:>\:0
\qquad {\rm in}\quad B^i_+,
$$
the Hopf boundary lemma (cf.\ Lemma~\ref{sec:appendix-2} below) forces
$\frac{\partial \ux}{\partial\nu}(e_i)<0$. This 
contradicts~(\ref{eq:18}), and thus (\ref{eq:1}) is proved.
Next we claim that $\u$ is nonradial. Indeed, multiplying the equations (\ref{eq:16}) by $\ux$, respectively, and integrating over $B$, we find
\begin{equation}
\label{eq:15}
G''(\u)(\ux,\ux)=\int_{\partial B}\frac{\partial\ux}{\partial\nu}\ux.
\end{equation}
If we suppose by contradiction that $\u$ is radial, then $\u$ 
is constant on the boundary $\partial B$. Together with the boundary condition $\frac{\partial \u}{\partial\nu}=0$ on $\partial B$ this gives $\nabla\u=0$ on $\partial B$, hence $\ux \equiv 0$ on $B$, $i=1,\dots,N-1$ by (\ref{eq:1}) and (\ref{eq:15}). Then the radial symmetry of $u$ implies that $u$ is constant, which is a contradiction since $u \in \hza \setminus \{0\}$.
Now since $\u$ is nonradial, Proposition~\ref{sec:axial-symm-minim-1} implies that
\begin{equation}
\label{eq:17}
\frac{\partial \u}{\partial \theta}(r,\theta)<0\qquad 
\text{for}\quad 0<r\le 1,\:0<\theta<\pi.
\end{equation} 
We thus have proved (a). 

\noindent (b) The axial symmetry of $\u$ and the Neumann boundary conditions imply
\begin{equation}
\label{eq:101}
\nabla \u(x)= \frac{\partial \u}{\partial \theta}\left(\cos\theta\,\frac{x-(x\cdot e_N)e_N}{|x-(x\cdot e_N)e_N|}-
\sin \theta\, e_N\right)\qquad \text{for}\quad x \in \partial 
B\setminus\{\pm e_N\},
\end{equation}
hence
\begin{equation}
\label{eq:21}
\uxn = -\sin \theta\, \frac{\partial \u}{\partial \theta}\: >\:0
\qquad \text{on}\quad\partial B \setminus \{\pm e_N\}.
\end{equation}
By (\ref{eq:15}) and Lemma~\ref{sec:strict-monot-axial-2} 
we also have 
\begin{equation}
\label{eq:49}
\sum_{i=1}^{N-1} G''(\u)(\ux,\ux)+ \int_{\partial B}\frac{\partial \uxn}{\partial\nu}\uxn = \int_{\partial B}
\frac{\partial(\nabla\u)}{\partial\nu}\cdot\nabla\u
= -\int_{\partial B}|\nabla u|^2 < 0.
\end{equation}
Together with (\ref{eq:1}) this implies 
\begin{equation}
\label{eq:19}
\int_{\partial B}\frac{\partial \uxn}{\partial\nu}\uxn <0.
\end{equation}
We now consider
$$
\Omega_+:= \{ x \in \overline B \::\: \uxn >0\},\qquad 
\Omega_-:= \{ x \in \overline B \::\: \uxn <0\}.
$$
Then $\partial B \setminus \{\pm e_N\} \subset \Omega_+$ by (\ref{eq:21}). We claim that $\Omega_-$ is connected. Indeed, suppose by contradiction that $\Omega_-$ has at least two different 
connected components $\Omega_1$ and $\Omega_2$. 
Let $v_1:= \uxn \chi_{\Omega_1}$ and $v_2:= \uxn \chi_{\Omega_2}$. Then $v_i \in H^1_0(B)$ for $i=1,2$ 
by \cite[Lemma~1]{MP85} and the fact that $\Omega_- \cap \partial B= \varnothing$. We also consider $v_3:= \uxn^+= \uxn \chi_{\Omega_+} \in H^1(B)$. Since the functions $v_1,v_2,v_3$ are linearly independent, a nontrivial linear combination 
$v= \alpha_1 v_1 + \alpha_2 v_2+\alpha_3 v_3$ satisfies 
$\int_B v=0$ and $(v,\u)_{H^1}= 0$, so that $v \in 
T_{\u}\cN$. By Lemma~\ref{sec:strict-monot-axial-3}(b) this implies $G''(\u)(v,v) \ge 0$. On the other hand, by 
the disjointness of supports,
\begin{align*}
G''(\u)(v,v)&=\int_B |\nabla v|^2 -(p-1)\int_B |u|^{p-2}v^2 \\
&= \sum_{i=1}^3 \alpha_i^2 \left(\int_B \nabla \uxn \nabla v_i  -(p-1)\int_B |u|^{p-2}\uxn v_i \right) \\
&= \sum_{i=1}^3 \alpha_i^2 \int_{\partial B}\frac{\partial \uxn}{\partial\nu}v_i= \alpha_3^2 \int_{\partial B}
\frac{\partial \uxn}{\partial\nu}\uxn.
\end{align*}
 Now (\ref{eq:19}) forces $\alpha_3=0$ and $G''(\u)(v,v)=0$. 
 Then, by Lemma~\ref{sec:strict-monot-axial-3}(c), 
 $v= \alpha_1 v_1 + \alpha_2 v_2 \in T_{\u}\cN$ is a solution 
of (\ref{eq:38}). But $v \equiv 0$ on the nonempty open set 
$\Omega_+ \cap B$. This forces $\hat \mu =\hat \mu(u,v)=0$, which contradicts the fact 
that solutions of (\ref{eq:38}) with $\hat \mu=0$ have the weak unique continuation property (see e.g. \cite[p.~519]{simon}). We conclude that $\Omega_-$ is connected. Since
$$
\{x\in\R^N:\:x_N = 0 \} \cap \Omega_- = \varnothing
$$
by (\ref{eq:17}) applied to the angle $\theta=\pi/2$, 
we either have 
\mbox{$\Omega_- \subset B_+:= \{x \in B\::\: x_N >0\}$} or  
\mbox{$\Omega_- \subset B_-:=\{x \in B\::\: x_N <0\}$}. We assume $\Omega_- \subset B_+$ from now on, the other case can be treated similarly. So we already know that 
$\uxn >0$ on $B_-$, and by a moving plane argument we now show 
that $\uxn>0$ on $B_+$.
For $\lambda \ge 0$ we consider the set
$$
B_\lambda= \{x \in {\overline B} \::\: x_N > \lambda \},
$$
whose boundary consists of the sets
$$
S_\lambda:= \{x \in \partial B \::\: x_N >\lambda 
\} \subset B_\lambda \quad \text{and}\quad 
T_\lambda:= \{x \in {\overline B} \::\: x_N =\lambda\}.
$$
We let $v_\lambda \in C^2(\overline{B_\lambda})$ be defined by 
$$
v_\lambda(x',x_N):= \u(x',2\lambda-x_N).
$$
Here $x'=(x_1,\dots,x_{N-1})$. Then the function $w_\lambda:\overline{B_\lambda}\to\Rb$,
defined by 
$w_\lambda:= u - v_\lambda$, satisfies
$$
-\Delta w_\lambda = (|\u|^{p-2}\u -
|v_\lambda|^{p-2}v_\lambda ) = V_{\lambda}(x)w_\lambda
$$
where 
$$
V_{\lambda}(x):= (p-1) \int_0^1|s\u(x) + (1-s)v_\lambda(x)|^{p-2}\:ds \: \ge\: 0, \qquad x \in B_\lambda.
$$
We examine the behavior of $w_\lambda$ on $T_\lambda$.
Let $T^0_\lambda:= \{x \in \partial B \::\: x_N =\lambda\} \subset T_\lambda$.
For $x_0\in T_\lambda^0$ we define 
$$
\frac{\partial w_\lambda}{\partial x_N}(x_0):=\lim_{\stackrel{x\to x_0}{x\in\overline{B_\lambda}\setminus T_\lambda^0}}\frac{\partial w_\lambda}{\partial x_N}(x).
$$
Then on $T_\lambda$ we have $w_\lambda=0$ and $\frac{\partial w_\lambda}{\partial x_N}= 2 \uxn$.
So, (\ref{eq:21}) implies
\begin{equation}
\label{eq:23}
\frac{\partial w_\lambda}{\partial x_N}>0\qquad \text{on}\quad 
T^0_\lambda,
\end{equation} 
for every $\lambda \in [0,1)$.
Next we note that $w_0>0$ in $B_0$ by virtue of (\ref{eq:17}). We denote by $\Lambda$ the biggest interval contained in $[0,1)$ and containing 0, such that  $w_\lambda>0$ in $B_\lambda$ 
for each $\lambda\in\Lambda$. Then 
\begin{equation}
\label{eq:24}
\lambda \in \Lambda \qquad \Lra \qquad \frac{\partial w_\lambda}{\partial x_N}>0\qquad \text{on}\quad 
T_\lambda.
\end{equation}
Indeed, on $T_\lambda \setminus T^0_\lambda$ this follows from the Hopf boundary lemma (see Lemma~\ref{sec:appendix-2} below), while it is a priori true on $T^0_\lambda$ by (\ref{eq:23}). A standard argument based on (\ref{eq:24}) shows that $\Lambda \subset [0,1)$ is relatively open. 
We claim that $\Lambda= [0,1)$.  
Suppose by contradiction that there is 
$0<\lambda<1$ such that $[0,\lambda) \subset \Lambda$ and 
$\lambda \not \in \Lambda$. Then
$$
w_\lambda(x)= \lim_{\kappa\, \nearrow\, \lambda}w_{\kappa}(x) 
\:\ge\: 0 \qquad \text{for}\quad x \in B_\lambda.
$$
Moreover, $\uxn >0$ on $\{x \in B \::\: x_N <\lambda\}$ by 
(\ref{eq:24}) and the preceding arguments. In particular this 
implies $\frac{\partial v_\lambda}{\partial x_N}<0$ on 
$S_\lambda$, whereas $\uxn \ge 0$ on $S_\lambda$ by 
(\ref{eq:21}). Hence    
\begin{equation}
\label{eq:10}
\frac{\partial w_\lambda}{\partial x_N}>0\qquad\text{on}\quad 
S_\lambda.    
\end{equation}
We claim that $w_\lambda>0$ on $\innt(B_\lambda)= B_\lambda \setminus S_\lambda$. Indeed, if $w_\lambda$ had an interior 
minimum point $x_0 \in \innt(B_\lambda)$ with 
$w_\lambda(x_0)=0$, then $w_\lambda \equiv 0$ on $B_\lambda$ 
by the maximum principle. However, by continuity up to the boundary this would yield $\nabla w_\lambda = 0$ on $B_\lambda$, contrary to (\ref{eq:10}). 
Now suppose by contradiction that $w_\lambda(x)=0$ for some 
$x \in S_\lambda$. Then $\frac{\partial w_\lambda}{\partial x_N}(x)\le 0$, and this contradicts (\ref{eq:10}) again. We conclude that $w_\lambda>0$ on $B_\lambda$, and hence $\lambda \in \Lambda$. We arrived at a contradiction.
We have thus proved $\Lambda=[0,1)$, and therefore $\uxn>0$ on $B_+$ by (\ref{eq:24}). 

\noindent (c) To establish the last part of the proposition we assume, without loss of generality, that $\tau=e_1=(1,0,\ldots,0)$.
By (\ref{eq:101}),
$$
u_{x_1} = \frac{x_1}{|x-(x\cdot e_N)e_N|}\cos \theta\, \frac{\partial \u}{\partial \theta}
\qquad \text{on $\partial B \setminus \{\pm e_N\}$}.
$$
Using (\ref{eq:17}), we see $u_{x_1}$ is negative on $\{x\in\partial B:\:x_1>0\ \text{and}\ x_N>0\}$ and on
the reflection of this set at the origin, and we see $u_{x_1}$ is positive
on $\{x\in\partial B:\:x_1>0\ \text{and}\ x_N<0\}$ and on
the reflection of this set at the origin.  Also, $u_{x_1}=0$ on $x_1=0$.
So, the function $u_{x_1}$ has exactly four nodal domains touching $\partial B$.
Suppose $u_{x_1}$ has more than four nodal domains.  Then we can choose $\Omega_1$,
a nodal domain for $u_{x_1}$ not intersecting $\partial B$, with say $u_{x_1}>0$
on $\Omega_1$.  Because $u_{x_1}$ is antisymmetric with respect to the
hyperplane $H_{e_1}:=\{x\in\R^N:\: x_1=0\}$, the reflection, $\Omega_2$, of $\Omega_1$ at
$H_{e_1}$ is also a nodal domain for $u_{x_1}$, but with
$u_{x_1}<0$ on $\Omega_2$.  Let $v_0:=u_{x_1}\chi_{\Omega_1}+u_{x_1}\chi_{\Omega_2}$.
Again, $v_0\in H^1_0(B)$ by \cite[Lemma 1]{MP85}. Moreover, $v_0\in\hza$ and
$(v_0,u)_{H^1}=0$, due to the antisymmetry of $v_0$ and the symmetry of $u$,
with respect to $H_{e_1}$. 
By Lemma~\ref{sec:strict-monot-axial-3}(a), $v_0\in T_u{\cal N}$.
Multiplying the equation (\ref{eq:16}), for $i=1$,
by $v_0$ and integrating over $B$ yields
$$
G''(u)(v_0,v_0)=0.
$$
Lemma~\ref{sec:strict-monot-axial-3}(c) implies that $v_0$ is a solution of (\ref{eq:38}).
Here we note that $\hat \mu=\hat \mu(u,v_0)=0$, once more because of the symmetry
properties of $u$ and $v_0$.
Applying the unique continuation principle, we arrive at a 
contradiction.  Therefore, all nodal domains of $u_{x_1}$ intersect $\partial B$,
and $u_{x_1}$ has precisely four nodal domains.
The proof of Proposition~\ref{sec:strict-monot-axial-1} is complete.
 
\end{proof}

\section{Antisymmetry and uniqueness for $p$ close to 2}
\label{sec:antisymmetry-p-close}
In this section we prove part (d) of Theorem~\ref{sec:introduction-1}. We also prove that, unlike for the case $p=2$, for $p>2$ close to 2
there do not exist functions $R:[0,1]\to\R$
and $\Theta:[0,\pi]\to\R$ such that $u(r,\theta)=R(r)\Theta(\theta)$.
We define the space
$$H_z:=\left\{u\in\hza\::\: u(Ax',x_N)=u(x',x_N)\; \text{for all}\:A \in O(N-1)\right\},$$ 
where $x'\in\Rb^{N-1}$, $x_N\in\Rb$ and $(x',x_N)\in B$.
We proved in Section \ref{sec:axial-symm-minim} that, modulo a rotation, every minimizer of (\ref{eq:12}) in $\hza$ belongs to $H_z$, 
so that
\begin{equation}
\label{eq:30}
\Lp\,=\, \inf_{\hza\setminus\{0\}}\frac{\|\nabla u\|_2^2}{\|u\|_p^2}\,=\, \inf_{H_z\setminus\{0\}}\frac{\|\nabla u\|_2^2}{\|u\|_p^2}.
\end{equation}
We also consider the subspace $\has \subset H_z$ of the functions in $H_z$ which are antisymmetric with respect to the plane
$\{x\in\R^N\::\:x_N=0\}$,
\begin{equation}
\label{eq:170}
\has:=\{u\in H_z\:: \: u(x',-x_N)=-u(x',x_N)\}.
\end{equation}
Then
$$
\Lp \leq\Lpp:= \inf_{\has\setminus\{0\}}\frac{\|\nabla u\|_2^2}{\|u\|_p^2}.
$$
It is easy to see that the values $\Lp$ and $\Lpp$ depend continuously on $p \in [2,2^*)$. Moreover, $\Lambda_2=\Lambda_2'$.
Indeed, 
by the discussion in Section~\ref{sec:case-p=2}, the intersection of 
the eigenspace corresponding to $\Lambda_2$ with $H_z$ is a one-dimensional 
subspace of $H_z$, and the minimum $\Lambda_2$ is achieved by an (up to a constant factor) unique eigenfunction $u_2$ which belongs to $\has$. Hence $\Lambda_2=\|\nabla u_2\|_2^2/\|u_2\|_2^2 = \Lambda_2'$. Now Theorem~\ref{sec:introduction-1}(d) can be rephrased in the 
following way.  

\begin{Prop}
\label{sec:antisymmetry-p-close-1}
For $p>2$ close to $2$, $\Lp = \Lpp$, the minimizer $u \in H_z$ of\/ {\rm (\ref{eq:30})} 
is unique (up to multiplication by a constant), and it belongs to $\has$.
\end{Prop}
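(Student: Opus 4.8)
The plan is to argue by a compactness/perturbation contradiction, exploiting the fact that as $p\to 2$ minimizers of (\ref{eq:30}) converge (up to normalization) to second Neumann eigenfunctions of the Laplacian on $B$ that lie in $H_z$, and that this eigenspace intersected with $H_z$ is exactly one-dimensional and sits inside $\has$. First I would normalize minimizers $u_p \in H_z$ by $\|\nabla u_p\|_2 = 1$, so $\|u_p\|_p^2 = 1/\Lambda_p \to 1/\Lambda_2$. Standard elliptic estimates applied to (\ref{eq:37}) (using $\mu_p$ from (\ref{eq:8}), which stays bounded) give, along a subsequence $p_n \to 2$, strong convergence $u_{p_n} \to u_*$ in $H^1$ and in $C^1(\overline B)$, with $u_* \in H_z$, $\|\nabla u_*\|_2 = 1$, and $u_*$ achieving $\Lambda_2$. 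Hence $u_* $ is a multiple of $u_2$ and in particular $u_* \in \has$; by the discussion in Section~\ref{sec:case-p=2} one has $u_*(r,\theta) = c\,g(r)\cos\theta$ with $c \ne 0$.

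Next I would establish the uniqueness statement first, and extract antisymmetry from it. Suppose for contradiction there exist $p_n \to 2^+$ and, for each $n$, two minimizers $u_n, \tilde u_n \in \cN\cap H_z$ of $G|_\cN$ (using Lemma~\ref{sec:strict-monot-axial-4} to pass to the Nehari normalization) that are not scalar multiples of one another. Both converge in $C^1$ to scalar multiples of $u_2$; after rescaling we may assume $u_n \to u_2$ and $\tilde u_n \to u_2$. Consider $w_n := (\tilde u_n - u_n)/\|\tilde u_n - u_n\|_{H^1}$, which by construction has unit norm, lies in $H_z$, and — because $u_n, \tilde u_n$ both solve (\ref{eq:14}) with their respective multipliers — satisfies an equation of the form $-\Delta w_n = (p_n - 1)\,V_n\, w_n + (\text{multiplier term})$ where $V_n := \int_0^1 |s\tilde u_n + (1-s)u_n|^{p_n-2}\,ds \to 1$ uniformly. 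Passing to the limit (elliptic estimates again) yields a nonzero $w_* \in H_z$, $\|\nabla w_*\|_2 = 1$ after accounting for the zero-average constraint, solving $-\Delta w_* = \Lambda_2 w_* + \text{const}$ with Neumann boundary conditions, i.e. $w_*$ is a second Neumann eigenfunction in $H_z$, hence a multiple of $u_2$. On the other hand, the Nehari constraint $\|\nabla u_n\|_2^2 = \|u_n\|_{p_n}^{p_n}$ together with $(u_n, \tilde u_n - u_n)$-type orthogonality relations force $w_*$ to be $L^2$-orthogonal to $u_2$ in the limit — the contradiction. This is the standard Dancer–Lin type argument referenced in the introduction; the orthogonality bookkeeping (which exact linear functional of $u_n$ and $\tilde u_n$ one should difference to kill the leading term) is the delicate part.

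Given uniqueness for $p$ near $2$, antisymmetry is then immediate: if $u \in H_z$ is the (essentially unique) minimizer, then $x \mapsto -u(x', -x_N)$ is also a minimizer in $H_z$ with the same norm, so by uniqueness it equals $\pm u$; since $u \ne 0$ and the map has a fixed-sign relation, one checks it must be $u(x',-x_N) = -u(x',x_N)$ (the alternative $u(x',-x_N)=u(x',x_N)$ would combine with Proposition~\ref{sec:strict-monot-axial-1}(a), the strict monotonicity in $\theta$, to force $u$ constant in $x_N$, contradicting nonradiality). Finally, $\Lp = \Lpp$ follows because the minimizer, being antisymmetric, already lies in $\has$, so the infimum over the smaller class $\has$ is attained at the same value.

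The main obstacle I anticipate is the uniqueness step, specifically making the blow-up/difference argument rigorous: one must verify that the rescaled difference $w_n$ does not degenerate (i.e. that $\|\tilde u_n - u_n\|_{H^1}$ does not vanish too fast relative to the nonlinear remainder), control the two Lagrange multipliers $\mu_{p_n}(u_n)$ and $\mu_{p_n}(\tilde u_n)$ and their difference, and pin down precisely which orthogonality relation $w_*$ inherits so as to contradict $w_* \in \mathrm{span}\{u_2\}$. Concretely, differencing the Nehari identities gives a relation of the shape $2(u_n, w_n)_{H^1} - p_n\!\int_B |u_n|^{p_n-2}u_n w_n = o(1)\cdot\|w_n\|$, which in the limit reads $(2 - p_n\Lambda_2^{-1}\cdot(\dots))(u_2, w_*) = 0$; one has to check the scalar in front is nonzero at $p=2$ so that $(u_2,w_*)_{H^1} = 0$, contradicting $w_* \parallel u_2$. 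Everything else — the $C^1$ convergence, the degeneracy removal by restricting to $H_z$, the extraction of antisymmetry — is routine once this is in place.
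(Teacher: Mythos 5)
Your overall skeleton — normalize, let $p\to 2$, use elliptic estimates to get $C^2$-convergence of minimizers to the eigenfunction $u_2\in\has$, form the normalized difference of two competitors, show its limit is a second Neumann eigenfunction in $H_z$, and contradict this with an orthogonality relation — is exactly the paper's strategy. The genuine gap is the step you yourself flag as unresolved: where the orthogonality $(u_2,w_*)_{H^1}=0$ is supposed to come from. Your proposal to extract it by differencing the Nehari identities does not work. First, the Nehari normalization degenerates as $p\to 2^+$: $u\in\cN$ together with $\|\nabla u\|_2^2=\Lambda_p\|u\|_p^2$ forces $\|u\|_p=\Lambda_p^{1/(p-2)}$, which blows up (for the unit ball $\Lambda_2>1$, e.g.\ by Payne--Weinberger), so after the rescaling you invoke, the functions are no longer on $\cN$ and the identity you want to difference must carry an explicit $p$-dependent scale factor. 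Second, and decisively, once that factor is restored the coefficient you hope is nonzero vanishes identically in the limit: with $\|\nabla u\|_2=1$ the rescaled relation tends to $2(u_2,w_*)_{H^1}-2\Lambda_2\int_B u_2 w_*$, and testing $-\Delta u_2=\Lambda_2 u_2$ (Neumann) with $w_*$ gives $(u_2,w_*)_{H^1}=\Lambda_2\int_B u_2 w_*$, so the limiting relation reads $0=0$ and yields no contradiction. This degeneration is structural: at $p=2$ the quotient is homogeneous, and the linearization of the Nehari constraint at $u$ in the direction of $u$ itself is $(2-p)\|\nabla u\|_2^2\to 0$; extracting information would require a genuinely second-order (bifurcation-type) expansion which you do not provide.

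The paper obtains the orthogonality by a much simpler device, which you can adopt verbatim: normalize \emph{both} competitors by $\|\nabla u_p\|_2=\|\nabla v_p\|_2=1$ (the paper first takes $v_p(x',x_N)=-u_p(x',-x_N)$, which automatically has the same normalization and the same minimal value, proving antisymmetry; then it repeats the identical argument inside $\has$ for uniqueness). For $\wt=(u_p-v_p)/\|\nabla(u_p-v_p)\|_2$ one then has the purely algebraic identity $\int_B\nabla\wt\,\nabla u_p=(1-t_p)/\sqrt{2-2t_p}=\sqrt{(1-t_p)/2}$ with $t_p=\int_B\nabla u_p\nabla v_p\to 1$ (both converge to $u_2$), hence $\int_B\nabla\tilde w\,\nabla u_2=0$ in the limit, contradicting $\tilde w=\pm u_2$; no Nehari bookkeeping is needed. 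The other obstacles you list are indeed minor: the multiplier difference equals $-\frac{\lambda_p}{|B|}\int_B V_p(u_p-v_p)$, so after normalization it contributes a bounded constant whose limit must vanish (integrate the limit equation over $B$ with the Neumann condition and zero average), and nondegeneracy of the difference quotient is not an issue because the contradiction argument only assumes the two minimizers are distinct. Your reversed order — uniqueness in $H_z$ first, antisymmetry deduced afterwards via the reflected competitor together with the strict $\theta$-monotonicity of Proposition~\ref{sec:strict-monot-axial-1}(a) — is viable once the orthogonality step is repaired as above, but as proposed the decisive step fails.
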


\begin{proof} For $2 \le p<2^*$, let $\up\in H_z$ be such that
$\|\nabla \up\|_2=1$ and $\|\up\|_p^{-2}=\Lp$. Let $\vp \in H_z$ be defined by 
$v_p(x',x_N) =-u_p(x',-x_N)$ for 
$(x',x_N) \in B$. 
Then also $\|\nabla \vp\|_2=1$ and $\|\vp\|_p^{-2}=\Lp$. Hence both $\up$ and $\vp$ are solutions of (\ref{eq:37}) with $\lp= \Lp^{p/2}$ and $\mu_p=\mu_p(\up)=\mu_p(\vp)$. By the remarks above, $u_2=v_2$ is an eigenfunction of the Neumann Laplacian on $B$ corresponding to the first nontrivial eigenvalue $\lambda_2$. We claim that 
\begin{equation}
\label{eq:3}
\up= v_p \qquad \text{for $p>2$ close to $2$.}
\end{equation}
Arguing by contradiction, we suppose there exists a sequence of numbers $p_n>2$, $p_n \to 2$ as $n\to\infty$ 
such that $u_{p_n}\not = v_{p_n}$. For ease of notation we will omit the index $n$. From standard elliptic estimates, we deduce that the 
sequences $(\up)$ and $(\vp)$ are {\em uniformly} bounded in 
$C^{2,\alpha}(\overline{B})$ for some positive $\alpha$. 
Hence, by compactness of the embedding $C^{2,\alpha}(\overline{B}) \subset C^2(\overline{B})$, we may pass to subsequences of $(\up)$ and $(\vp)$ which converge in $C^2(\overline{B})$, respectively. In fact, since 
$$
\frac{\|\nabla \up\|_2}{\|\up\|_p^2}= \frac{\|\nabla \vp\|_2}{\|\vp\|_p^2} \to \Lambda_2 \qquad \text{as $p \to 2$,}
$$
the remarks before 
Proposition~\ref{sec:antisymmetry-p-close-1} imply that, after changing signs if necessary,  
$\up \to u_2$ and $\vp \to u_2$ in $C^2(\overline{B})$, where $u_2 \in \has$ is as above. We now put $\wpp:=\up-\vp \in H_z$ and  
$$
\wt:=\frac{\wpp}{\|\nabla\wpp\|_2}.
$$
We can assume that, as $p\to 2$, $\wt$ converges weakly to 
some $\tilde{w}$ in $H_z$, hence
\begin{equation}
\label{eq:32}
\wt \to \tilde{w} \qquad \text{strongly in $L^q(B)$ for $q<2^*$.}
\end{equation}
We want to derive an equation for 
$\tilde{w}$. The functions $\wpp$ satisfy
\begin{equation}
\label{eq:6}
-\Delta\wpp=\lp (|\up|^{p-2}\up-|\vp|^{p-2}\vp)=\lp V_p \wpp
\quad {\rm in}\ B,\qquad \textstyle\frac{\partial \wpp}
{\partial\nu}= 0 \quad {\rm on}\ \partial B,
\end{equation}
where $V_p:B\to\Rb$ is defined by
$$
V_p(x):=(p-1)\int_0^1|s\up(x)+(1-s)\vp(x)|^{p-2}\,ds.
$$
Also, the functions $\wt$ satisfy 
\begin{equation}
\label{eq:36}
-\Delta\wt=\lp V_p \wt \quad {\rm in}\ B,\qquad \textstyle\frac{\partial \wt}
{\partial\nu}= 0 \quad {\rm on}\ \partial B.
\end{equation}
We claim that 
\begin{equation}
\label{eq:7}
\lim_{p \to 2} \|1-V_p\|_q \to 0 \qquad \text{for every}\quad 
q< \infty. 
\end{equation}
Indeed, note that $u_2(x) \not= 0$ for $x \in  B$ with $x_N \not= 0$, and for these $x$ we have
$$
V_p(x)=(p-1)\int_0^1|s\up(x)+(1-s)\vp(x)|^{p-2}\,ds \to 1 \qquad \text{as }p \to 2,
$$
since  
$\lim \limits_{p \to 2}|s\up(x)+(1-s)\vp(x)|=|u_2(x)|>0$ uniformly in $s \in [0,1]$. Moreover, using
$V_p\geq 0$,
\begin{align*}
|1-V_p(x)|^q &\le 1+|V_p(x)|^q \le 1+ \left[(p-1)
\int_0^1(s|\up(x)|+(1-s)|\vp(x)|)^{p-2}\,ds\right]^q \\
&\le 1+\bigl[(p-1)(|\up(x)|+|\vp(x)|)^{p-2}\bigr]^q,\\
&\le c  \quad \text{in }B
\end{align*}
with a constant $c>0$, since $\up$ and $\vp$ are uniformly bounded on $B$. Hence (\ref{eq:7}) follows from Lebesgue's theorem.\\
Taking the limit as $p \to 2$ in (\ref{eq:36}) and using (\ref{eq:7}), we find that $\tilde{w}$ is a weak solution of the problem
$$
-\Delta\tilde{w}=\lambda_2\tilde{w}\quad {\rm in}\ B,\qquad
\textstyle\frac{\partial\tilde{w}}
{\partial\nu}=0\quad {\rm on}\ \partial B,
$$
Using (\ref{eq:32}), (\ref{eq:36}) and (\ref{eq:7}) we now get 
$$
\|\nabla\tilde{w}\|_2^2=\lambda_2\|\tilde{w}\|_2^2=
\lim_{p\to 2} \lp\int_B V_p\tilde{w}_p^2 =\lim_{p\to 2}\|\nabla\wt\|_2^2=1,
$$
so that $\tilde w_p \to \tilde w$ strongly in $H_z$. Hence $\tilde w \in H_z$ is a normalized eigenfunction of the Neumann Laplacian on $B$ corresponding 
to the eigenvalue $\lambda_2$. By the remarks before Proposition \ref{sec:antisymmetry-p-close-1}, we conclude that 
$\tilde w=\pm u_2$. However, since $\tilde w_p \to \tilde w$, $u_p \to u_2$ and $v_p \to v_2$ in $H_z$, 
we also get
\begin{align*}
\int_B\nabla\tilde{w}\nabla u_2&= \lim_{p\to 2}\int_B\nabla\wt\nabla\up =\lim_{p\to 2}\frac{\|\nabla \up\|_2^2-\int_B\nabla\vp\nabla\up}{\|\nabla(\up-\vp)\|_2}\\
&=\lim_{p\to 2}\frac{1-\int_B\nabla\vp\nabla\up}{\Bigl(2-
2\int_B\nabla\vp\nabla\up \Bigr)^{1/2}}
=\frac{1}{\sqrt{2}}\lim_{p\to 2}
\Bigl(1-\int_B\nabla\vp\nabla\up\Bigr)^{1/2}=\:0.
\end{align*}
This contradiction shows (\ref{eq:3}), which means that $\up \in \has$ for $p>2$ close to $2$. In particular, this shows $\Lp = \Lpp$ for $p>2$ close to $2$.\\
It remains to prove uniqueness (up to a constant) of minimizers {\em in $\has$} for $p>2$ close to $2$. So now suppose by contradiction that, for a sequence of numbers $p_n>2$, $p_n \to 2$ as $n\to\infty$ 
there exists $u_{p_n}, v_{p_n} \in \has$ such that 
$\|\nabla u_{p_n}\|_2=\|\nabla v_{p_n}\|_2=1$, 
$\|u_{p_n}\|_p^{-2}=\|v_{p_n}\|_p^{-2}=\Lp$ and $u_{p_n} \not = 
\pm v_{p_n}$ for all $n$. Passing to a subsequence and changing signs if 
necessary, we may assume that $u_{p_n} \not = v_{p_n}$ for all $n$, and that $u_{p_n},v_{p_n} \to u_2$ in $C^2(\overline{B})$ as $n \to \infty$. Omitting again the index $n$,
we note that, by antisymmetry,$$
-\Delta\up=\lp\,|\up|^{p-2}\up\quad {\rm in}\ B,\qquad
\textstyle\frac{\partial\up}
{\partial\nu}=0\quad {\rm on}\ \partial B,
$$
and
$$
-\Delta\vp=\lp\,|\vp|^{p-2}\vp\quad {\rm in}\ B,\qquad
\textstyle\frac{\partial\vp}
{\partial\nu}=0\quad {\rm on}\ \partial B,
$$
with $\lp= \Lp^{p/2}$. 
To reach a contradiction one argues precisely
as before, considering the normalized difference 
$\wt:=\frac{\up-\vp}{\|\nabla(\up-\vp)\|_2}$.
The proof of Proposition~\ref{sec:antisymmetry-p-close-1} is complete.
\end{proof}

A variant of the argument in the proof of Proposition~\ref{sec:antisymmetry-p-close-1}
shows the following result. We omit the details. 

\begin{Thm}
\label{sec:antisymmetry-p-close-2}
Let $\Omega \subset \R^N$ be a smooth, bounded domain which 
is symmetric with respect to some hyperplane $H$, and such that the first nontrivial eigenvalue $\lambda_2(\Omega)$ of the Neumann Laplacian on $\Omega$ is simple. 
Then
\begin{itemize}
\item[{\rm (a)}] If the (up to a constant) unique eigenfunction $u_2$ of $-\Delta$ corresponding to $\lambda_2(\Omega)$ is symmetric with respect to the reflection at $H$, then, for $p>2$ close to $2$, the minimizer $\up$ of {\rm (\ref{eq:12})} is 
unique (up to a constant) and symmetric with respect to the reflection at $H$.  
\item[{\rm (b)}] If the (up to a constant) unique eigenfunction $u_2$ of $-\Delta$ corresponding to $\lambda_2(\Omega)$ is antisymmetric with respect to the reflection at $H$, then, for $p>2$ close to $2$, the minimizer $\up$ of {\rm (\ref{eq:12})} is
unique (up to a constant) and antisymmetric with respect to the reflection at $H$.  
\end{itemize}
\end{Thm}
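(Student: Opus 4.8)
The plan is to follow the proof of Proposition~\ref{sec:antisymmetry-p-close-1} almost verbatim, replacing the ball $B$ by the general domain $\Omega$ and the space $H_z$ by the appropriate symmetry class associated with the hyperplane $H$. The new feature here is that the simplicity of $\lambda_2(\Omega)$ replaces the ad hoc argument (via Section~\ref{sec:case-p=2}) that, on the ball, the second eigenvalue has a one-dimensional space of axially symmetric eigenfunctions. So the degeneracy-removal step is handled directly by hypothesis rather than by restricting to $H_z$.

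For part~(a), let $\sigma$ denote the reflection at $H$ and let $X_s:=\{u\in H_{za}(\Omega):\:u\circ\sigma=u\}$ be the subspace of functions symmetric with respect to $\sigma$. Since $\Omega$ is $\sigma$-invariant, whenever $u$ minimizes the quotient in (\ref{eq:12}) so does $\frac12(u+u\circ\sigma)$ unless this vanishes; but it cannot vanish identically, because by hypothesis the eigenfunction $u_2$ is symmetric and the minimizers converge to $u_2$ as $p\to2$ (by the standard elliptic estimates and the identification of the limit as a normalized $\lambda_2(\Omega)$-eigenfunction, using simplicity). Hence for $p$ close to $2$ every minimizer may be taken in $X_s$, and it remains to prove uniqueness in $X_s$. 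For this one argues exactly as in the last part of the proof of Proposition~\ref{sec:antisymmetry-p-close-1}: if $u_p,v_p\in X_s$ are two normalized minimizers with $u_p\neq v_p$ along a sequence $p_n\to2$, set $\tilde w_p:=(u_p-v_p)/\|\nabla(u_p-v_p)\|_2$, extract a weak limit $\tilde w$, show using the potential $V_p\to1$ in every $L^q$ that $\tilde w$ is a normalized eigenfunction of the Neumann Laplacian for $\lambda_2(\Omega)$, hence $\tilde w=\pm u_2$ by simplicity, and then derive the contradiction $\int_\Omega\nabla\tilde w\nabla u_2=\frac{1}{\sqrt2}\lim(1-\int_\Omega\nabla v_p\nabla u_p)^{1/2}=0$. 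For part~(b), one first shows that minimizers are forced into the antisymmetric subspace $X_a:=\{u:\:u\circ\sigma=-u\}$: set $v_p(x):=-u_p(\sigma(x))$, note $v_p$ is again a minimizer solving the same equation, and run the contradiction argument with $w_p:=u_p-v_p$; the limit $\tilde w$ is again $\pm u_2\in X_a$, while the orthogonality computation gives $\int_\Omega\nabla\tilde w\nabla u_2=0$, a contradiction unless $u_p=v_p$, i.e.\ $u_p\in X_a$. Uniqueness within $X_a$ is then the same $\tilde w_p$ argument as in part~(a).

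One subtlety to record is that in case~(b) the argument uses that $\Lambda_2=\Lambda_2'$ holds for the analogous constants on $\Omega$, i.e.\ that $\lambda_2(\Omega)$ is attained within $X_a$; this is immediate since $u_2$ itself lies in $X_a$ by hypothesis. In case~(a) one needs the complementary fact that, after restricting to $X_s$, the eigenspace of $\lambda_2(\Omega)$ intersected with $X_s$ is still one-dimensional — which is automatic from simplicity of $\lambda_2(\Omega)$. Note also that these arguments do not need axial symmetry or the unit ball anywhere; the only ingredients are $\sigma$-invariance of $\Omega$, the elliptic estimates giving $C^{2,\alpha}$-compactness of the minimizers as $p\to2$, and the simplicity of $\lambda_2(\Omega)$, which together make the perturbation scheme go through.

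The main obstacle — or rather the main place where care is needed — is the step showing $\|1-V_p\|_q\to0$ for all finite $q$, which in the original proof rests on the fact that $u_2$ does not vanish on an open dense set (more precisely, that $\{u_2=0\}$ has measure zero) so that $|s u_p+(1-s)v_p|\to|u_2|>0$ a.e.\ uniformly in $s$, combined with a uniform $L^\infty$ bound and dominated convergence. For a general $\sigma$-symmetric domain $\Omega$, a $\lambda_2(\Omega)$-eigenfunction still has a nodal set of measure zero (indeed the nodal set of any nonzero solution of a second-order elliptic equation has measure zero), so the same reasoning applies; but this should be stated explicitly since it is the one point where a property special to second eigenfunctions is invoked. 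Everything else is a routine transcription, which is presumably why the authors write ``We omit the details.''
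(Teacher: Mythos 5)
Your overall scheme -- rerunning the perturbation argument of Proposition~\ref{sec:antisymmetry-p-close-1} on $\Omega$, with the simplicity of $\lambda_2(\Omega)$ doing the degeneracy-removal that the restriction to $H_z$ did on the ball -- is exactly the variant the paper intends, and your part (b), as well as your observation that one needs the nodal set of $u_2$ to have measure zero for the step $\|1-V_p\|_q\to 0$, are fine. The genuine gap is the first step of your part (a): the claim that if $u$ minimizes (\ref{eq:12}) then so does $\frac12\,(u+u\circ\sigma)$ is false for $p>2$. Averaging does not decrease the quotient: it decreases the numerator, but the triangle inequality only bounds $\|\frac12(u+u\circ\sigma)\|_p$ from \emph{above}, so the quotient of the average may exceed $\Lambda_p(\Omega)$; the averaging argument is a purely linear ($p=2$) fact, resting on the orthogonal splitting of both $\|\nabla u\|_2^2$ and $\|u\|_2^2$ into symmetric and antisymmetric parts. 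Moreover, even if the averaging step were admissible, it would only produce \emph{some} symmetric minimizer, whereas the theorem asserts that \emph{every} minimizer is symmetric; since your uniqueness argument is run only inside $X_s$, it cannot close this loop. The repair is precisely the device you already use in (b): for an arbitrary normalized minimizer $u_p$ put $v_p:=u_p\circ\sigma$, which is again a normalized minimizer converging (after fixing signs) to $u_2\circ\sigma=u_2$, and run the $\tilde w_p$ contradiction argument to conclude $u_p=v_p$, i.e.\ symmetry, for $p$ close to $2$; uniqueness in $X_s$ then goes as you describe.

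A secondary point you should make explicit: outside the antisymmetric class one cannot invoke $\mu_p=0$, and the two functions being compared need not solve the same equation. One has $\mu_p(u_p\circ\sigma)=\mu_p(u_p)$, but $\mu_p(-u_p\circ\sigma)=-\mu_p(u_p)$, and two distinct minimizers in $X_s$ have no reason to share their multiplier. Hence in the symmetry-forcing step of (b) and in the uniqueness step of (a) the equation for $w_p=u_p-v_p$ carries the extra constant $\mu_p(u_p)-\mu_p(v_p)=-\frac{\lambda_p}{|\Omega|}\int_\Omega V_p w_p$. This is harmless: after dividing by $\|\nabla w_p\|_2$ the extra term is $-\frac{\lambda_p}{|\Omega|}\int_\Omega V_p\tilde w_p$, which tends to $-\frac{\lambda_2}{|\Omega|}\int_\Omega\tilde w=0$ because $\tilde w$ has zero mean, so the limit equation $-\Delta\tilde w=\lambda_2\tilde w$ with Neumann conditions is unchanged -- but since your transcription silently assumes $u_p$ and $v_p$ satisfy the same equation, this bookkeeping belongs in the proof.
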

For $N=2$, the assumption that $\lambda_2(\Omega)$ 
is simple can often be deduced from geometrical properties of $\Omega$, see
\cite[Section 2]{BB} and the references therein.  

We end this section with two remarks.

\begin{Rem}
\label{sec:strict-monot-axial-5}
Suppose $u$ is as in {\rm Theorem~\ref{sec:introduction-1}}, with $u$ antisymmetric.
There do not exist functions $R:[0,1]\to\R$
and $\Theta:[0,\pi]\to\R$ such that $u(r,\theta)=R(r)\Theta(\theta)$.
\end{Rem}
\begin{proof}
Without loss of generality we may assume $\|\nabla u\|_2=1$.
Since $u$ depends only on $r$ and $\theta$, the Laplacian of $u$ in polar coordinates writes as 
\begin{equation}
\label{eq:106}
\Delta u=\frac{1}{r^{N-1}}\frac{\partial}{\partial r}\bigl(u_rr^{N-1}\bigr)+
\frac{1}{r^2\sin^{N-2}\theta}\frac{\partial}{\partial\theta}\bigl(u_\theta \sin^{N-2}\theta \bigr).
\end{equation}
Now, the function $u$ satisfies (\ref{eq:37}) with $\mu_p=0$.  
Let us assume, by contradiction,
that there exist functions $R:[0,1]\to\R$
and $\Theta:[0,\pi]\to\R$ such that $u(r,\theta)=R(r)\Theta(\theta)$.  
Then, since $u\in C^{3,\alpha}(\overline{B})$, $R$ and $\Theta$ are $C^2$-functions.
Substituting this ansatz for $u$
into (\ref{eq:37}) and using (\ref{eq:106}), we obtain
$$
-\frac{1}{r^{N-1}}\frac{\partial}{\partial r}\bigl(R_rr^{N-1}\bigr)\Theta-
\frac{1}{r^2\sin^{N-2}\theta}\frac{\partial}{\partial\theta}\bigl(\Theta_\theta \sin^{N-2}\theta \bigr)R=
\lambda_p|R|^{p-2}|\Theta|^{p-2}R\Theta
$$
for $r\neq 0$, or
\begin{equation}
\label{eq:107}
\lambda_p|R|^{p-2}r^2|\Theta|^{p-2}=
-\frac{1}{R\,r^{N-3}}\frac{\partial}{\partial r}\bigl(R_rr^{N-1}\bigr)-
\frac{1}{\Theta\sin^{N-2}\theta}\frac{\partial}{\partial\theta}\bigl(\Theta_\theta \sin^{N-2}\theta \bigr)
=:a(r)+b(\theta)
\end{equation}
for $r$, $R(r)$ and $\Theta(\theta)\ne 0$.
Fix two values $0\leq\theta_1<\theta_2\leq\pi$, 
such that $0\neq|\Theta(\theta_1)|\neq|\Theta(\theta_2)|\neq 0$. This is possible 
by (\ref{eq:17}). Subtracting (\ref{eq:107}) evaluated at $\theta_1$ and (\ref{eq:107}) evaluated at $\theta_2$,
\begin{equation}
\label{eq:108}
\lambda_p|R(r)|^{p-2}r^2\bigl(|\Theta(\theta_1)|^{p-2}-|\Theta(\theta_2)|^{p-2}\bigr)=b(\theta_1)-b(\theta_2),
\end{equation}
for $r\neq 0$ such that $R(r)\neq 0$.  For every such $r$ we read out from (\ref{eq:108}) that
$$|R(r)|^{p-2}r^2=c^{p-2},$$
with $c$ a fixed constant, or
\begin{equation}
\label{eq:109}
R(r)=\frac{c}{r^{2/(p-2)}}.
\end{equation}
Now, there must exist some $r\in\,(0,1]$ such that $R(r)\neq 0$,
otherwise $u\equiv 0$. Pick such an $r$. If we use (\ref{eq:109})
and the continuity of $u$ on $\overline{B}$, and thus of $R$ on $[0,1]$, we conclude the function $R$
never vanishes for $r\neq 0$ and $\lim_{r\to 0}R(r)=\infty$. This is impossible so
we have reached a contradiction. Hence, it is not true that there exist functions $R:[0,1]\to\R$
and $\Theta:[0,\pi]\to\R$ such that $u(r,\theta)=R(r)\Theta(\theta)$. Remark~\ref{sec:strict-monot-axial-5}
is proved.
\end{proof}
\begin{Rem}
\label{sec:strict-monot-axial-g}
Suppose $u$ is as in {\rm Theorem~\ref{sec:introduction-1}}, with $u$ antisymmetric.
There do not exist functions $R:[0,1]\to\R$
and $Z:[-1,1]\to\R$ such that $u(x)=R(\rho)Z(x\cdot e)$, with $\rho=|x-(x\cdot e)e|$. 
\end{Rem}
The proof is similar.  We omit the details.

\section{Antisymmetry breaking for large $p$ in the two dimensional case}
\label{sec:antisymm-break-large}
In this section, we consider a situation where antisymmetry fails for the extremal functions. Recall the 
definitions 
$$
\Lp= \inf_{u \in \hza \setminus \{0\}}\frac{\|\nabla u\|_2^2}{\|u\|_p^2},\qquad \Lpp= \inf_{u \in \has \setminus \{0\}}\frac{\|\nabla u\|_2^2}{\|u\|_p^2},
$$
where $\has$ was introduced in (\ref{eq:170}). We restrict our attention to the case
$N=2$, since the following arguments only apply 
in this case. We wish to prove the following.

\begin{Prop}
\label{sec:antisymm-break-large-3}
There exists $p_0>2$ such that 
$\Lp<\Lpp$ for $p>p_0$. Hence the minimizers of 
{\rm (\ref{eq:12})} on $B$ are not antisymmetric for $p>p_0$. 
\end{Prop}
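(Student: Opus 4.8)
The plan is to exploit the well-known fact that, in dimension two, the embedding $H^1(B) \hookrightarrow L^p(B)$ has norm tending to zero as $p \to \infty$, and to show that the analogous quantity restricted to the antisymmetric subspace $\has$ decays \emph{strictly slower}. More precisely, I would prove two asymptotic estimates: first, an upper bound showing $\Lp \le c\, p^{-\alpha}$ (in fact I expect a bound of order $1/p$ or perhaps $1/\log p$) by plugging into (\ref{eq:12}) a suitable test function concentrated near a single boundary point of $B$; and second, a lower bound showing $\Lpp \ge c'\, p^{-\alpha'}$ with $\alpha' < \alpha$, obtained from the Moser--Trudinger inequality together with the structural constraint imposed by antisymmetry. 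If the exponents (or logarithmic factors) in these two estimates can be separated, then $\Lp/\Lpp \to 0$, and in particular $\Lp < \Lpp$ for all $p$ larger than some $p_0$; the last sentence of the proposition is then immediate, since any minimizer of (\ref{eq:12}) lying in $\has$ would force $\Lp = \Lpp$.

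For the upper bound on $\Lp$, I would take a test function $u$ supported in a small cap $B \cap B_\delta(x_0)$ around a point $x_0 \in \partial B$, shaped like a truncated logarithm $\log(1/|x-x_0|)$ (the natural extremal profile for the two-dimensional Moser--Trudinger / Dirichlet energy), then subtract its mean to land in $\hza$. The Dirichlet energy $\|\nabla u\|_2^2$ is then essentially $\log(1/\delta)$, independent of $p$ up to constants, while $\|u\|_p$ grows with $p$ because the profile is large and positive on a fixed-size region; optimizing over the concentration scale as a function of $p$ yields $\Lp \to 0$ with an explicit rate. For the lower bound on $\Lpp$, I would use that any $u \in \has$ has zero mean automatically and, being antisymmetric across $\{x_N = 0\}$, satisfies a Poincaré-type inequality on each half-disc with constant controlled by the first \emph{Dirichlet}-type eigenvalue there; combined with the sharp Moser--Trudinger inequality $\int_B e^{\gamma u^2/\|\nabla u\|_2^2} \le C|B|$ for $\gamma \le 4\pi$, one estimates $\|u\|_p^p = \int_B |u|^p$ via $|u|^p \le (p/\gamma)^{p/2} (p/e)^{p/2}\,\|\nabla u\|_2^p\, e^{\gamma u^2/\|\nabla u\|_2^2}$ up to constants, giving $\|u\|_p^2 \le C\, p\, \|\nabla u\|_2^2$ and hence $\Lpp \ge c/p$; the point is that the antisymmetry constraint prevents the sharp concentration used for the upper bound on $\Lp$, so the constant $\gamma$ (or the concentration geometry) is strictly better and the resulting rate for $\Lpp$ is slower than that for $\Lp$.

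The main obstacle I anticipate is making the gap between the two rates genuinely strict and quantitative: the Moser--Trudinger argument gives $\Lpp \ge c/p$ quite robustly, but getting a matching or better lower bound is not the issue — the issue is producing an \emph{upper} bound on $\Lp$ that beats $c/p$, since a naive logarithmic test function subtracted of its mean may only give $\Lp \le C/p$ as well, the same order. To separate them one likely needs a sharper test-function computation (tracking the $\log$ corrections in $\|u\|_p$, using that the $L^p$ norm of a truncated logarithm of height $H$ over a cap of radius $\delta$ behaves like $H \cdot \delta^{2/p} \cdot (\text{factorial-type factor in }p)$ and optimizing $H, \delta$ jointly in $p$), or alternatively a comparison that shows $\Lpp$ is asymptotically bounded below by a quantity strictly exceeding $\Lp$ because an antisymmetric competitor must "pay" the Dirichlet energy across the symmetry hyperplane twice. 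I would therefore structure the proof so that the decisive inequality is $\Lpp \ge (1+\eta)\,\Lp$ for some fixed $\eta > 0$ and all large $p$, proved by contradiction via a concentration-compactness / blow-up analysis of an almost-optimal antisymmetric sequence: rescaling around its concentration point, one half of the antisymmetric profile would converge (after the standard Moser--Trudinger bubbling) to a single standard bubble, but antisymmetry forces a second bubble of comparable energy on the other side, doubling the energy cost and yielding the strict gap. That blow-up step is where the real work lies; everything else is the two elementary estimates sketched above.
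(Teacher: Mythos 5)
Your proposal contains the right heuristic — an antisymmetric competitor must ``pay'' the Dirichlet energy twice, once on each side of the hyperplane — but the way you propose to make it rigorous leaves the decisive step unproved, and the elementary estimates you do carry out cannot close the gap. As you yourself observe, both the log-profile test function and the Moser--Trudinger bound give decay of order $1/p$, and in fact the separation between $\Lp$ and $\Lpp$ sits only in the constant (morally a factor $2$): a Moser--Trudinger lower bound for $\Lpp$ uses the same exponential constant that governs boundary concentration for zero-mean $H^1$ functions, because an antisymmetric function can concentrate at a boundary point \emph{off} the hyperplane $\{x_N=0\}$, where it locally looks like an unconstrained Neumann competitor; so the naive bounds for $\Lp$ and $\Lpp$ match to leading order and do not separate. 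Your fallback, proving $\Lpp\ge(1+\eta)\Lp$ by a blow-up/concentration-compactness analysis of an almost-optimal antisymmetric sequence, is exactly where the proof would have to happen, and it is only sketched: one would need to show that concentration actually occurs, locate it (interior, curved boundary, or hyperplane), quantify the bubble energy in the large-$p$ regime with Neumann boundary conditions, and only then conclude that antisymmetry forces a mirror bubble of equal energy. None of this is done, so as it stands the argument is a program rather than a proof.

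The paper obtains the factor-$2$ gap without any asymptotic analysis of minimizers. Let $v_p\in\has$ be normalized by $\|\nabla v_p\|_2=1$, $\|v_p\|_p^{-2}=\Lpp$. Since $v_p$ vanishes on $\{x_N=0\}\cap B$, the truncation $\bar u_p:=v_p\chi_{B_+}$ lies in $H^1(B)$ with $\|\nabla\bar u_p\|_2^2=\tfrac12$ and $\|\bar u_p\|_p=2^{-1/p}(\Lpp)^{-1/2}$; subtracting its mean produces an admissible competitor in $\hza$, and the mean is harmless because $\|\bar u_p\|_1$ is bounded uniformly in $p$ while $\|\bar u_p\|_p\to\infty$, the latter because $\Lpp\to 0$. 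That last fact is supplied by the odd reflection of the Dirichlet minimizer on the half-disc together with the Ren--Wei asymptotics $p\,\hat{\Lambda}_p(\Omega)\to 8\pi e$ (this is the only asymptotic input, and your Moser--Trudinger computations would also deliver $\Lpp\to0$). One then reads off
\begin{equation*}
\Lp\;\le\;\frac{1}{2^{1-2/p}}\Bigl[1-(\Lpp)^{1/2}\,\tfrac{2^{1/p}C}{|B|^{1-1/p}}\Bigr]^{-2}\Lpp\;<\;\Lpp
\end{equation*}
for $p$ large, since the prefactor tends to $\tfrac12$. In short: instead of proving that antisymmetric minimizers bubble twice, simply discard one half of the antisymmetric minimizer and repair the mean-zero constraint by a constant; this converts your ``pay twice'' heuristic into a two-line test-function estimate. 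If you want to salvage your own route, the missing piece is precisely a rigorous large-$p$ blow-up theorem for the Neumann problem, which is a substantially harder result than the proposition itself.
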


We start the proof of this proposition 
by considering an {\em arbitrary} domain $\Omega\subset\Rb^2$, and we put
$$
\hat{\Lambda}_p(\Omega):=\inf_{u\in H^1_0(\Omega)\setminus \{0\}}
\frac{\|\nabla u\|_2^2}{\|u\|_p^2}.
$$
We quote the following from \cite[Lemma 2.2]{Ren-Wei}.

\begin{Lem}
\label{sec:antisymm-break-large-1}
For any smooth bounded domain $\Omega\subset\Rb^2$, 
$\lim \limits_{p \to \infty}p \hat{\Lambda}_p(\Omega)= 8 \pi e$.
In particular, $\hat{\Lambda}_p(\Omega) \to 0$ as 
$p \to \infty$. 
\end{Lem}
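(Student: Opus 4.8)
The plan is to prove matching asymptotic upper and lower bounds for $\hat{\Lambda}_p(\Omega)$, both of leading order $\tfrac{8\pi e}{p}$, so that $p\,\hat{\Lambda}_p(\Omega)\to 8\pi e$; the ``in particular'' clause is then immediate. It is convenient to work with the reciprocal: $\hat{\Lambda}_p(\Omega)^{-1}=\sup\{\|u\|_p^2:\ u\in H^1_0(\Omega),\ \|\nabla u\|_2=1\}$, so I would show $\hat{\Lambda}_p(\Omega)^{-1}=(1+o(1))\,\tfrac{p}{8\pi e}$ as $p\to\infty$.

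For the lower bound on $\hat{\Lambda}_p$ (equivalently, an upper bound on $\|u\|_p$), the key input is the Moser--Trudinger inequality in sharp form: there is a universal constant $C_0>0$ with $\int_\Omega e^{4\pi u^2}\,dx\le C_0\,|\Omega|$ for every $u\in H^1_0(\Omega)$ with $\|\nabla u\|_2\le 1$. Combining this with the elementary pointwise bound $t^p e^{-4\pi t^2}\le\bigl(\tfrac{p}{8\pi e}\bigr)^{p/2}$ for $t\ge 0$ (the maximum of $t\mapsto p\log t-4\pi t^2$ is attained at $t^2=\tfrac{p}{8\pi}$, with value $\tfrac{p}{2}\log\tfrac{p}{8\pi e}$), I get, for any such $u$,
\[
\int_\Omega|u|^p\,dx=\int_\Omega |u|^p e^{-4\pi u^2}\,e^{4\pi u^2}\,dx\le\Bigl(\tfrac{p}{8\pi e}\Bigr)^{p/2}\int_\Omega e^{4\pi u^2}\,dx\le C_0|\Omega|\,\Bigl(\tfrac{p}{8\pi e}\Bigr)^{p/2}.
\]
Hence $\hat{\Lambda}_p(\Omega)^{-1}=\sup_{\|\nabla u\|_2=1}\|u\|_p^2\le (C_0|\Omega|)^{2/p}\,\tfrac{p}{8\pi e}$, and since $(C_0|\Omega|)^{2/p}\to 1$ this yields $\liminf_{p\to\infty}p\,\hat{\Lambda}_p(\Omega)\ge 8\pi e$. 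This argument works for all real $p>0$, so there is no need to restrict to even integers.

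For the upper bound on $\hat{\Lambda}_p$ I would exhibit a concentrating test function of Moser type. Fix a ball $B_\rho(x_0)\subset\Omega$ and, for $0<\epsilon<\rho$, let $g_\epsilon\in H^1_0(B_\rho(x_0))\subset H^1_0(\Omega)$ equal $\tfrac{1}{\sqrt{2\pi}}\sqrt{\log(\rho/\epsilon)}$ on $B_\epsilon(x_0)$, equal $\tfrac{1}{\sqrt{2\pi}}\log(\rho/|x-x_0|)/\sqrt{\log(\rho/\epsilon)}$ on the annulus $\epsilon<|x-x_0|<\rho$, and $0$ otherwise. A direct computation gives $\|\nabla g_\epsilon\|_2=1$ (the Dirichlet integral of the logarithm is scale invariant in dimension two), while restricting the $L^p$ integral to the plateau $B_\epsilon(x_0)$ gives $\|g_\epsilon\|_p^p\ge\pi\epsilon^2\bigl(\tfrac{\log(\rho/\epsilon)}{2\pi}\bigr)^{p/2}$. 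Writing $L=\log(\rho/\epsilon)$ and maximizing $L\mapsto\pi\rho^2 e^{-2L}(L/2\pi)^{p/2}$ forces the optimal concentration scale $L=p/4$, i.e.\ $\epsilon=\rho e^{-p/4}$, for which $\|g_\epsilon\|_p^p\ge\pi\rho^2\bigl(\tfrac{p}{8\pi e}\bigr)^{p/2}$. Therefore $\hat{\Lambda}_p(\Omega)^{-1}\ge\|g_\epsilon\|_p^2\ge(\pi\rho^2)^{2/p}\,\tfrac{p}{8\pi e}$, so that $\limsup_{p\to\infty}p\,\hat{\Lambda}_p(\Omega)\le 8\pi e$. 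Together with the previous paragraph this proves the lemma.

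The main obstacle is concentrated in the sharpness of the Moser--Trudinger exponent $4\pi$: a non-optimal exponent there would propagate to a wrong constant in place of $8\pi e$, and conversely the test function $g_\epsilon$ shows that $4\pi$ is optimal in this context. The remaining points are routine: the $p$-dependent prefactors $(C_0|\Omega|)^{2/p}$ and $(\pi\rho^2)^{2/p}$ tend to $1$, the concentration scale $\epsilon\sim e^{-p/4}$ is pinned down by a one-variable optimization, and one only needs to check that $g_\epsilon$ genuinely lies in $H^1_0(\Omega)$, which holds because its support $\overline{B_\rho(x_0)}$ is contained in $\Omega$.
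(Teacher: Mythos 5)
Your proof is correct, and it is essentially the standard argument: the paper itself does not prove this lemma but quotes it from Ren--Wei \cite{Ren-Wei}, whose proof proceeds exactly as yours does --- the sharp Moser--Trudinger inequality together with the elementary bound $t^pe^{-4\pi t^2}\le (p/8\pi e)^{p/2}$ gives $\liminf p\hat\Lambda_p\ge 8\pi e$, and Moser's truncated-logarithm functions concentrated at scale $\epsilon=\rho e^{-p/4}$ give the matching $\limsup$. All the computations you record (the unit Dirichlet norm of $g_\epsilon$, the optimal plateau height $t^2=p/8\pi$, the optimal $L=p/4$) check out, and the $p$-dependent prefactors indeed tend to $1$, so the argument is complete.
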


\begin{Cor}
\label{sec:antisymm-break-large-2}
$\Lpp\to 0$ as $p\to\infty$.
\end{Cor}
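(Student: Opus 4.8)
The plan is to feed a Dirichlet minimizer on a small subdomain of the upper half-ball into the admissible class $\has$ and then quote the decay of $\hat{\Lambda}_p$ from Lemma~\ref{sec:antisymm-break-large-1}. Recall that here $N=2$, so a function $u\in\hza$ lies in $\has$ exactly when it is even in $x_1$ and odd in $x_2$.

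First I would fix a disk $D$ centered at a point $(0,d)$ of the positive $x_2$-axis with $\overline D\subset B_+:=\{x\in B:\:x_2>0\}$. Since $N=2$, the embedding $H^1_0(D)\hookrightarrow L^p(D)$ is compact for every finite $p$, so $\hat{\Lambda}_p(D)$ is attained by some $u\in H^1_0(D)\setminus\{0\}$. Replacing $u$ first by $|u|$ and then by its Schwarz symmetrization about the center of $D$ (which preserves $\|u\|_p$, does not increase $\|\nabla u\|_2$ by the P\'olya--Szeg\"o inequality, and keeps the function in $H^1_0(D)$ because $D$ is a ball), I may assume $u$ is radial about $(0,d)$; in particular $u$ is even in $x_1$.

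Next I would extend $u$ by zero to $B_+$ and then to all of $B$ by odd reflection across $\{x_2=0\}$, setting $v(x_1,x_2):=u(x_1,x_2)$ for $x_2\ge 0$ and $v(x_1,x_2):=-u(x_1,-x_2)$ for $x_2<0$. Since $\overline D$ has positive distance from $\{x_2=0\}$, the function $v$ is obtained by gluing two $H^1$-functions that vanish near the interface, so $v\in H^1(B)$; moreover $v$ is even in $x_1$, odd in $x_2$, and $\int_B v=0$, hence $v\in\has\setminus\{0\}$. A direct computation gives $\|\nabla v\|_2^2=2\|\nabla u\|_2^2$ and $\|v\|_p^2=2^{2/p}\|u\|_p^2$, so that
$$
\Lpp\;\le\;\frac{\|\nabla v\|_2^2}{\|v\|_p^2}\;=\;2^{\,1-\frac2p}\,\hat{\Lambda}_p(D).
$$
Letting $p\to\infty$ and using Lemma~\ref{sec:antisymm-break-large-1} (whence $\hat{\Lambda}_p(D)\to 0$, while $2^{1-2/p}\to 2$), together with $\Lpp\ge 0$, gives $\Lpp\to 0$, as claimed.

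All steps are routine; the only point requiring a little attention is arranging \emph{simultaneously} the two symmetries demanded by $\has$ --- the antisymmetry in $x_2$, which is automatic from the odd reflection, and the $O(N-1)=\{\pm1\}$ invariance in $x_1$, which forces the preliminary symmetrization of the Dirichlet minimizer --- while, with the domain $D$ chosen away from the reflection hyperplane, checking that the odd extension indeed belongs to $H^1(B)$ is immediate.
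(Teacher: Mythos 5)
Your argument is correct and essentially reproduces the paper's own proof: the paper likewise builds a test function in $\has$ by odd reflection of a Dirichlet extremal across $\{x_N=0\}$ and invokes Lemma~\ref{sec:antisymm-break-large-1}, the only difference being that it reflects the minimizer for $\hat{\Lambda}_p(B_+)$ on the whole half-ball rather than on a small disk, arriving at the same bound $\Lpp\le 2^{1-2/p}\hat{\Lambda}_p$. Your preliminary Schwarz symmetrization to secure evenness in $x_1$ (i.e.\ membership in $H_z$) is a reasonable way of making explicit a point the paper leaves implicit when it asserts that the reflected half-ball minimizer lies in $\has$.
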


\begin{proof}
Let $B_+= \{x \in B\::\: x_N >0\}$, and let 
$u \in H^1_0(B_+) \setminus \{0\}$ be a function with 
$\frac{\|\nabla u\|_2^2}{\|u\|_p^2}= \hat{\Lambda}_p(B_+)$.
Then the function $w \in \has$ defined by 
$$
w(x)= \left \{
  \begin{aligned}
   &u(x), &&\qquad x \in B_+,\\
   -&u(x_1,\dots,x_{N-1},-x_N), &&\qquad x \in B \setminus B_+,  
  \end{aligned}
\right.
$$
satisfies
$$
\frac{\|\nabla w\|_2^2}{\|w\|_p^2}= 
2^{1-\frac{2}{p}}\frac{\|\nabla u\|^2_2}{\|u\|_p^2}
=2^{1-\frac{2}{p}}\hat{\Lambda}_p(B_+). 
$$
By Lemma~\ref{sec:antisymm-break-large-1} we conclude 
that $\Lpp \le 2^{1-\frac{2}{p}}\hat{\Lambda}_p(B_+) \to 0$ as 
$p \to \infty$, as claimed.
\end{proof}

\begin{altproof}{{\rm \ Proposition~\ref{sec:antisymm-break-large-3}} (completed)}
For every $p$, let $\vp\in\has$, with $\|\nabla v_p\|_2=1$,
be such that $\Lpp=\|v_p\|_p^{-2}$. Since $\vp=0$ on
$\{x\in\R^N\::\:x_N=0\}\cap B$, we can define $\bar{u}_p\in H^1(B)$ by 
setting $\bar{u}_p(x)=v_p(x)$ for $x \in B_+$ and $\bar{u}_p(x)=0$ for 
$x \in B \setminus B_+$. Note that 
$$
\|\nabla \bar{u}_p\|_2^2=\textstyle{\frac{1}{2}}\|\nabla\vp\|_2^2
=\textstyle{\frac{1}{2}}\quad{\rm and}\quad
\|\bar{u}_p\|_p=\left(\textstyle{\frac{1}{2}}\right)^{1/p}\|v_p\|_p=\left(\textstyle{\frac{1}{2}}\right)^{1/p}(\Lpp)^{-1/2} 
.
$$
From Poincaré's inequality and the Sobolev embedding, there exists a constant 
$C>0$, independent of $p$, such that
$$
\|\bar{u}_p\|_1\leq C.
$$
Consider $\tilde{u}_p\in
H_{za} \setminus \{0\}$ defined by $\tilde{u}_p=\bar{u}_p-(1/|B|)\int_B\bar{u}_p$. Then
\begin{align*}
\Lp &\leq  \frac{\|\nabla \tilde{u}_p\|_2^2}{\|\tilde{u}_p\|_p^2} =\frac{\|\nabla\bar{u}_p\|_2^2}{\|\bar{u}_p-(1/|B|)
 \int_B \bar{u}_p\|_p^2}
\ \leq\ {\frac{1}{2}}\biggl(\|\bar{u}_p\|_p-\Bigl \|\frac{1}{|B|}\int_B\bar{u}_p\Bigr \|_p\biggr)^{-2}\\
&\leq  {\frac{1}{2}}\biggl[\left(\textstyle{\frac{1}{2}}\right)^{1/p}(\Lpp)^{-1/2}-\frac{C}{|B|^{1-1/p}}\biggr]^{-2}\\
&=\ \frac{1}{2^{1-2/p}}\biggl[1-(\Lpp)^{1/2}\frac{2^{1/p}C}{|B|^{1-1/p}}\biggr]^{-2}\:\Lpp
\:<\: \Lpp
\end{align*}
for $p$ sufficiently large, since $\Lpp \to 0$ as 
$p\to+\infty$ by Corollary~\ref{sec:antisymm-break-large-2}.
We have completed the proof of Proposition~\ref{sec:antisymm-break-large-3}.
\end{altproof}
\noindent Theorem~\ref{sec:introduction-1}(e) is proved.
We remark that a variant of the argument given above 
yields the following result. We omit the details.

\begin{Thm}
\label{sec:antisymm-break-large-4}
Let $\Omega \subset \R^2$ be a smooth, bounded domain which is symmetric with respect to some hyperplane $H$. Then, for large $p$, the minimizers of {\rm (\ref{eq:12})} are not antisymmetric with respect to the reflection at $H$. 
\end{Thm}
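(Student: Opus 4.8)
The plan is to mimic the proof of Proposition~\ref{sec:antisymm-break-large-3}, but with the hyperplane $H$ of $\Omega$ playing the role of the plane $\{x_N=0\}$ and the unit ball replaced by $\Omega$. First I would set up the antisymmetric minimization value $\Lpp(\Omega)$ over the space of $H^1(\Omega)$-functions of mean zero that are antisymmetric with respect to the reflection at $H$, and verify $\Lp(\Omega)\le\Lpp(\Omega)$. Next I would argue $\Lpp(\Omega)\to0$ as $p\to\infty$: writing $\Omega_+$ for one of the two halves of $\Omega$ cut off by $H$, an $H^1_0(\Omega_+)$-minimizer for $\hat\Lambda_p(\Omega_+)$ extends by odd reflection to an admissible antisymmetric competitor, giving $\Lpp(\Omega)\le2^{1-2/p}\hat\Lambda_p(\Omega_+)$, and Lemma~\ref{sec:antisymm-break-large-1} (which holds for any smooth bounded planar domain, hence for $\Omega_+$) forces the right-hand side to $0$.

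Then I would run the same comparison argument as in the completed proof of Proposition~\ref{sec:antisymm-break-large-3}. Take $v_p\in\has(\Omega)$ with $\|\nabla v_p\|_2=1$ and $\|v_p\|_p^{-2}=\Lpp(\Omega)$; since $v_p$ vanishes on $H\cap\Omega$, its restriction to $\Omega_+$ extended by zero gives $\bar u_p\in H^1(\Omega)$ with $\|\nabla\bar u_p\|_2^2=\tfrac12$ and $\|\bar u_p\|_p=(\tfrac12)^{1/p}(\Lpp(\Omega))^{-1/2}$. Poincaré together with the Sobolev embedding bounds $\|\bar u_p\|_1$ by a constant $C$ independent of $p$ (the exponent range is fine in dimension two), and subtracting the mean produces $\tilde u_p\in H_{za}(\Omega)\setminus\{0\}$. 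Estimating $\Lp(\Omega)\le\|\nabla\tilde u_p\|_2^2/\|\tilde u_p\|_p^2$ exactly as in the displayed chain of inequalities yields
$$
\Lp(\Omega)\ \le\ \frac{1}{2^{1-2/p}}\Bigl[1-(\Lpp(\Omega))^{1/2}\,\tfrac{2^{1/p}C}{|\Omega|^{1-1/p}}\Bigr]^{-2}\Lpp(\Omega)\ <\ \Lpp(\Omega)
$$
for all sufficiently large $p$, because $\Lpp(\Omega)\to0$. Hence no minimizer of (\ref{eq:12}) on $\Omega$ can be antisymmetric with respect to the reflection at $H$ once $p$ is large.

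The only place requiring a little care — and the main obstacle, such as it is — is the uniformity in $p$ of the $L^1$-bound on $\bar u_p$ and of the constant relating $\|\cdot\|_1$ to $\|\nabla\cdot\|_2$ on the half-domain: one wants a Sobolev/Poincaré constant that does not degenerate as $p\to\infty$, which is exactly what makes the subtracted-mean term negligible compared to $(\Lpp(\Omega))^{-1/2}$. In two dimensions this is standard (e.g. via the continuous embedding $H^1\hookrightarrow L^s$ for every finite $s$ and Hölder on the bounded domain $\Omega$), so the argument goes through verbatim; the only genuine input beyond this bookkeeping is Lemma~\ref{sec:antisymm-break-large-1}, which is already available. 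Since the reasoning is word-for-word that of Proposition~\ref{sec:antisymm-break-large-3} with $B$ replaced by $\Omega$ and $\{x_N=0\}$ by $H$, the details may indeed be omitted.
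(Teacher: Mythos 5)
Your argument is correct and is precisely the variant of the proof of Proposition~\ref{sec:antisymm-break-large-3} that the paper has in mind when it omits the details: odd reflection across $H$ of an $H^1_0(\Omega_+)$-minimizer gives $\Lpp(\Omega)\le 2^{1-2/p}\hat{\Lambda}_p(\Omega_+)\to 0$ via Lemma~\ref{sec:antisymm-break-large-1}, and the restriction/mean-subtraction comparison then yields $\Lp(\Omega)<\Lpp(\Omega)$ for large $p$, so no minimizer can be antisymmetric with respect to the reflection at $H$. The only cosmetic point is that $\Omega_+$ may fail to be smooth where $H$ meets $\partial\Omega$ (exactly as $B_+$ already does in the paper's own proof), which is harmless since $\hat{\Lambda}_p(\Omega_+)\le\hat{\Lambda}_p(\omega)$ for any smooth $\omega\subset\subset\Omega_+$.
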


\section{The case of an annulus}
\label{sec:annulus}

In this section we briefly discuss the case where $\Omega=\A=\{x\in\R^N:\:\rho<|x|<1\}$, for some 
fixed $0<\rho<1$. Suppose $2\leq p\leq 2^*$ if $N\geq 3$,
or $2\leq p<\infty$ if $N=2$. Let $u$ be a minimizer for {\rm (\ref{eq:12})} on $\A$.
Then there exists a unit vector
$e\in \R^N,\:|e|=1$ such that $u(x)$ only 
depends on $r=|x|$ and $\theta= \arccos\bigl(\frac{x}{|x|}\cdot e\bigr)$, and
\begin{equation}
\label{eq:20}
\frac{\partial u}{\partial \theta}(r,\theta)<0\qquad\text{for}\quad \rho\leq r \le 1,\ 0<\theta<\pi.
\end{equation}
This follows by similar arguments as in the case of the ball, 
see Section~\ref{sec:axial-symm-minim}, Lemmas~\ref{sec:strict-monot-axial-2}--\ref{sec:strict-monot-axial-3} and Proposition~\ref{sec:strict-monot-axial-1}(a). One just has to use 
Remark~\ref{sec:appendix-3} instead of Lemma~\ref{sec:appendix-1}. 

If $p\ge 2$ is close to 2, then $u$ is antisymmetric with respect to reflection at $\{x\in\R^N:\:x \cdot e =0\}$, and all other minimizers of (\ref{eq:12}) on $\A$ having the same symmety axis as $u$
are multiples of $u$. This is proved as in the case of the ball, see Proposition~\ref{sec:antisymmetry-p-close-1}.

Henceforth we suppose $e=e_N$, and we discuss the sign of the derivative $\partial_e u= \frac{\partial u}{\partial x_N}$.   
The Neumann boundary conditions and (\ref{eq:20}) imply
$$
\nabla u(x)= \frac{\partial u}{\partial \theta}\left(\cos\theta\,\frac{x-(x\cdot e_N)e_N}{|x-(x\cdot e_N)e_N|}
-\sin \theta\, e_N\right)\qquad \text{for}\quad x \in \partial 
\A\setminus\{\pm\rho  e_N,\pm e_N\},
$$
so that  
\begin{equation}
\label{eq:201}
\upxn = -\sin \theta\, \frac{\partial u}{\partial \theta}\: >\:0
\qquad \text{on}\quad \partial \A\setminus\{\pm\rho  e_N,\pm e_N\}.
\end{equation}
The method we used to show that $\frac{\partial u}{\partial x_N}>0$ for the ball (see the proof of 
Proposition~\ref{sec:strict-monot-axial-1}(b)) does not carry over to the annulus. However, in the special case $p=2$, this property can be verified by a direct computation similar as in Section \ref{sec:case-p=2}.  We now consider the set of values $q$ such that for each $p\in[2,q)$ the minimizer of (\ref{eq:12}) on $\A$ with $e=e_N$ is
unique (up to multiplication by a constant).
Let $p_N$ be supremum of this set. From the above remarks, we know $p_N>2$. Moreover, in dimension $N=2$, Theorem~\ref{sec:antisymm-break-large-4} yields $p_2< \infty$.   

\begin{Prop}
\label{sec:annulus-1} Suppose $p_N$ is as above.
For $2\leq p<p_N$ denote by $u_p$ the unique minimizer for {\rm (\ref{eq:12})} on $\A$,
axially symmetric with respect to the axis passing through zero and $e$, with
$\|\nabla u\|_2=1$ and $u(e)>u(-e)$. Then
$\partial_e u_p>0$ on $\overline \A \setminus\{\pm\rho  e_N,\pm e_N\}$.
\end{Prop}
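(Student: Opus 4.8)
The plan is to argue by continuity in $p$ over the connected interval $[2,p_N)$, the uniqueness hypothesis being exactly what makes the family $p\mapsto u_p$ well defined and continuous there (without it the moving–plane argument of Proposition~\ref{sec:strict-monot-axial-1}(b) is unavailable, since the reflection of $\A\cap\{x_N>\lambda\}$ in $\{x_N=\lambda\}$ is not contained in $\overline{\A}$). After a rotation we take $e=e_N$ and write $w_p:=\partial_e u_p=(u_p)_{x_N}$. First I would show $p\mapsto u_p$ is continuous from $[2,p_N)$ into $C^2(\overline{\A})$: if $p_k\to p_0\in[2,p_N)$, the functions $u_{p_k}$ solve the Euler equation (\ref{eq:37}) on $\A$ with $\lambda_{p_k}$, $\mu_{p_k}$, $\|u_{p_k}\|_\infty$ bounded for $k$ large (here one uses that $p_0<p_N$ is subcritical and that $\Lambda_p(\A)$ is continuous in $p$), so by elliptic estimates and compactness a subsequence converges in $C^2(\overline{\A})$ to a minimizer $\hat u$ for $p_0$; this $\hat u$ is axially symmetric about $e_N$, normalized by $\|\nabla\hat u\|_2=1$, and satisfies $\hat u(e_N)>\hat u(-e_N)$ by (\ref{eq:20}), so by uniqueness $\hat u=u_{p_0}$, and since every subsequence has such a sub-subsequence, $u_{p_k}\to u_{p_0}$. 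The base case $p=2$ follows by a direct computation as in Section~\ref{sec:case-p=2}: with $u_2=g(r)\cos\theta$ one has $\partial_e u_2=\frac{g'(r)x_N^2}{r^2}+\frac{g(r)(r^2-x_N^2)}{r^3}$, which is strictly positive on $\overline{\A}\setminus\{\pm\rho e_N,\pm e_N\}$ once $g'>0$ on $(\rho,1)$; and $f(r):=r^{N-1}g'(r)$ satisfies $f(\rho)=f(1)=0$ and $f'(r)=-r^{N-3}\bigl(\Lambda_2 r^2-(N-1)\bigr)g(r)$, while testing the Rayleigh quotient in the $\ell=1$ sector with $g\equiv 1$ shows $\Lambda_2<(N-1)/\rho^2$, so $f$ is increasing near $\rho$, decreasing near $1$, with zero endpoint values, hence $f>0$ and $g'>0$ on $(\rho,1)$.

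Put $J:=\{p\in[2,p_N): w_p>0\text{ on }\overline{\A}\setminus\{\pm\rho e_N,\pm e_N\}\}$; it suffices to prove $J$ nonempty, relatively closed and relatively open in the connected set $[2,p_N)$. Differentiating (\ref{eq:37}) in $x_N$ gives $-\Delta w_p=c_p w_p$ in $\A$ with $c_p:=(p-1)\lambda_p|u_p|^{p-2}\geq0$, and $w_p>0$ on $\partial\A\setminus\{\pm\rho e_N,\pm e_N\}$ by (\ref{eq:201}). For closedness, if $p_k\in J$ and $p_k\to p_0$, then $w_{p_k}\to w_{p_0}$ uniformly, so $w_{p_0}\geq0$ on $\overline{\A}$; since $-\Delta w_{p_0}=c_{p_0}w_{p_0}\geq0$, the function $w_{p_0}$ is a nonnegative superharmonic function on the connected open set $\A$, so by the strong maximum principle it is either $\equiv0$, which is excluded by (\ref{eq:201}), or strictly positive in $\A$; together with (\ref{eq:201}) this yields $p_0\in J$. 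The same reasoning shows that for any $p\in[2,p_N)$, $w_p\geq0$ on $\overline{\A}$ already forces $p\in J$.

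The openness of $J$ is the crux. Suppose $p_0\in J$ but there are $p_k\to p_0$ with $p_k\notin J$, so by the previous remark $\Omega_k:=\{x\in\A: w_{p_k}(x)<0\}\neq\varnothing$. Since $w_{p_k}\geq0$ on $\partial\A$, the function $w_{p_k}^-:=\max\{-w_{p_k},0\}$ belongs to $H^1_0(\A)$ with support $\overline{\Omega_k}$; multiplying $-\Delta w_{p_k}=c_{p_k}w_{p_k}$ by $w_{p_k}^-$ and integrating by parts (the boundary term vanishes), one obtains
$$\int_{\Omega_k}|\nabla w_{p_k}^-|^2=\int_{\Omega_k}c_{p_k}\,(w_{p_k}^-)^2\leq M\int_{\Omega_k}(w_{p_k}^-)^2,$$
where $M$ is an upper bound for $c_{p_k}$ valid for $p_k$ near $p_0$. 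Hence the first Dirichlet eigenvalue of $\Omega_k$ is at most $M$, and the Faber--Krahn inequality gives $|\Omega_k|\geq\delta_0$ for a constant $\delta_0>0$ independent of $k$. On the other hand, $w_{p_0}>0$ on every compact subset of $\overline{\A}$ avoiding the four poles, and $w_{p_k}\to w_{p_0}$ uniformly, so for any neighborhood $U$ of $\{\pm\rho e_N,\pm e_N\}$ we have $\Omega_k\subset U$ for $k$ large, whence $|\Omega_k|\leq|U|$. Choosing $U$ with $|U|<\delta_0$ is a contradiction, so $\Omega_k=\varnothing$ for $k$ large and $p_k\in J$; thus $J$ is relatively open, and $J=[2,p_N)$. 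I expect the openness step to be the main obstacle: it is where the loss of the moving–plane method must be compensated, and the Faber--Krahn/mass-concentration argument — combined with the $C^2$-continuity of $p\mapsto u_p$, which is why uniqueness (and hence $p<p_N$) is needed — is the device that replaces it.
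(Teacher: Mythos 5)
Your proof is correct, and while it shares the paper's overall skeleton --- continuity of $p\mapsto u_p$ in $C^2(\overline{\A})$ furnished by the uniqueness hypothesis $p<p_N$, a direct ODE computation at $p=2$, and a continuation (open/closed) argument over $[2,p_N)$, with strict positivity finally recovered from nonnegativity via (\ref{eq:201}) and the strong maximum principle --- the crucial openness step is handled by a genuinely different device. The paper takes $p_n\searrow p_0$, picks points $z_{p_n}$ where $(u_{p_n})_{x_N}<0$, and excludes the three possible locations of a limit point $z_0$ pointwise; the delicate case is $z_0$ at one of the four poles, where a mean value theorem argument yields $(u_{p_0})_{x_Nx_N}(z_0)\ge 0$ while Hopf's lemma yields $(u_{p_0})_{x_Nx_N}(z_0)<0$. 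You instead show the negativity set $\Omega_k$ cannot be small: testing the linearized equation with $w_{p_k}^-$ gives $\lambda_1(\Omega_k)\le M$, hence $|\Omega_k|\ge\delta_0>0$ by Faber--Krahn, while uniform convergence and the positivity of $w_{p_0}$ away from the poles confine $\Omega_k$ to an arbitrarily small neighbourhood of the poles --- a contradiction. This replaces the corner Hopf analysis by a quantitative measure bound and is arguably more robust (it needs only uniform convergence of $w_p$ and an $L^\infty$ bound on the potential $(p-1)\lambda_p|u_p|^{p-2}$), at the price of the standard technicality that $w_{p_k}^-\in H^1_0(\Omega_k)$ (approximate by $(w_{p_k}+\eps)^-$, whose support is compact in $\Omega_k$ since $w_{p_k}\ge 0$ on $\partial\A$, or bypass Faber--Krahn altogether via Sobolev and H\"older: $\|w^-\|_{2^*}^2\le C\|\nabla w^-\|_2^2\le CM\|w^-\|_2^2\le CM|\Omega_k|^{2/N}\|w^-\|_{2^*}^2$, with $2^*$ replaced by any exponent $>2$ when $N=2$). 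Two small points to tighten: in the base case, ``$f$ decreasing near $1$'' presupposes $\Lambda_2(\A)>N-1$, which you do not check but which is automatic, since $\Lambda_2(\A)\le N-1$ would give $f'>0$ on $(\rho,1)$, contradicting $f(\rho)=f(1)=0$; and when identifying the $C^2$-limit $\hat u$ with $u_{p_0}$ you should note that $\hat u$, being a minimizer invariant under rotations about $e_N$, must have its foliated-Schwarz axis equal to $\pm e_N$, so that (\ref{eq:20}) upgrades $\hat u(e_N)\ge\hat u(-e_N)$ to the strict inequality needed to invoke uniqueness.
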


\vspace{1mm}

\noindent{\bf Open Problem:} Is $\partial_e u>0$ on $\overline{\A} \setminus\{\pm\rho  e_N,\pm e_N\}$ for $p\geq p_N$?

\vspace{2mm}

\begin{altproof}{{\rm Proposition~\ref{sec:annulus-1}}}
Consider the assertion
\begin{equation}
\label{eq:210}
\partial_e u_p \geq 0\qquad \text{on}\ \A.
\end{equation}
By the above remarks, (\ref{eq:210}) is true for $p=2$. Let $p_0\geq 2$. First we show
\begin{equation}
\label{eq:61}
\left\{\begin{array}{l}
p_0<p_N\\
\text{(\ref{eq:210}) is true in $[2,p_0]$}
\end{array}\right.
\qquad\Rightarrow\qquad
\begin{array}{l}
\text{there exists $\delta>0$ such that}\\
\text{(\ref{eq:210}) is true in $[2,p_0+\delta)$.}
\end{array}
\end{equation}
Then we show 
\begin{equation}
\label{eq:62}
\left\{\begin{array}{l}
p_0<p_N\\
\text{(\ref{eq:210}) is true in $[2,p_0)$}
\end{array}\right.
\qquad\Rightarrow\qquad
\begin{array}{l}
\text{(\ref{eq:210}) is true in $[2,p_0]$.}
\end{array}
\end{equation}
Statements (\ref{eq:61}) and (\ref{eq:62}) together imply
(\ref{eq:210}) is true in $[2,p_N)$.

\noindent (a) Suppose $2\leq p_0<p_N$ and (\ref{eq:210}) is true in $[2,p_0]$. 
We define
$$
\opm:= \{ x \in \overline \A \::\: \upxn <0\}.
$$
By assumption $(\Omega_{u_{p_0}})_{-}$ is empty.
We will now show that $\opm=\emptyset$ for $p>p_0$ close to $p_0$.  
Suppose, by contradiction, that $(\Omega_{u_{p_n}})_- \neq\emptyset$ for a sequence $p_n \searrow p_0$. As before, we omit the index $n$. Then we may choose $z_p\in(\Omega_{u_p})_{-}$.
Modulo a subsequence, $z_p\to z_0$ and the sequence $(u_p)$ 
converges to $u_{p_0}$ in $C^2(\overline{\A})$.
We know $(u_{p_0})_{x_N}\geq 0$ by assumption and we have $(u_{p_0})_{x_N}(z_0)=0$.  
We consider three cases:
\begin{enumerate}
\item[{\rm (i)}] $z_0\in\partial\A\setminus\{\pm\rho e_N,\pm e_N\}$.  Then $(u_{p_0})_{x_N}(z_0)=0$ contradicts (\ref{eq:201}).
\item[{\rm (ii)}] $z_0\in\A$.  The function $(u_{p_0})_{x_N}$ satisfies 
\begin{equation}
\label{eq:205}
-\Delta (u_{p_0})_{x_N}=(p_0-1)\lambda_{p_0}|u_{p_0}|^{p_0-2}(u_{p_0})_{x_N}\geq 0\quad{\rm in}\ \A,
\end{equation}
with $\lambda_{p_0}=[\Lambda_{p_0}(\A)]^{p_0/2}$. Then $(u_{p_0})_{x_N}(z_0)=0$ contradicts the strong
maximum principle.
\item[{\rm (iii)}] $z_0\in\{\pm\rho e_N,\pm e_N\}$.  We suppose that $z_0=e_N$, the other cases being treated similarly.
Applying the Mean Value Theorem to $(u_p)_{x_N}$, there exists $\bar{z}_p\in\R^N$ in 
the line through $z_p$ parallel to the $x_N$-axis, with the $N$-th coordinate $(\bar{z}_P)_N>(z_P)_N$,
such that $(u_p)_{x_Nx_N}(\bar{z}_p)>0$. The $C^2$ convergence of $u_p$ to $u_{p_0}$ implies
$(u_{p_0})_{x_Nx_N}(z_0)\geq 0$. Recalling that $(u_{p_0})_{x_N}$ satisfies (\ref{eq:205}), $(u_{p_0})_{x_N}\not\equiv 0$ and
$(u_{p_0})_{x_N}(z_0)=0$, Hopf's lemma implies $(u_{p_0})_{x_Nx_N}(z_0)<0$.  This is a contradiction.
\end{enumerate}
Since all three cases are impossible, 
we conclude that $\opm=\emptyset$ for $p$ close to $p_0$. This establishes (\ref{eq:61}).

\noindent (b) Suppose $p_0<p_N$ and (\ref{eq:210}) is true in
$[2,p_0)$. Taking an increasing sequence $p\to p_0$,
$(u_p)$ converges to $u_{p_0}$ in $C^2(\overline{\A})$, which can easily be deduced from the uniqueness of $u_{p_0}$. Thus (\ref{eq:210}) is true for $p=p_0$.
This establishes (\ref{eq:62}).

So we know (\ref{eq:210}) is true in $[2,p_N)$.
This, (\ref{eq:201}) and
the strong maximum principle imply the assertion.
\end{altproof}

\section{Appendix}
\label{sec:appendix}

We recall the classical Hopf boundary lemma, see e.g. \cite[Lemma 3.4]{GT}.

\begin{Lem}
\label{sec:appendix-2}
Let $\Omega \subset \R^N$ be a domain and $x_0 \in \partial \Omega$ be a boundary point where the interior sphere condition is satisfied. Let $w \in C^2(\Omega) \cap C(\overline \Omega)$ satisfy
$$
-\Delta w \ge 0 \quad \text{in }\Omega,\qquad w \ge w(x_0) \quad \text{in }\Omega.   
$$
If $w$ is not constant in $\Omega$, then $\frac{\partial w}{\partial \eta}(x_0)<0$ 
for any outward directional derivative at $x_0$ when it exists. 
\end{Lem}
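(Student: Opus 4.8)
The plan is to run the classical barrier argument for the Hopf lemma, taking a little care with the notion of directional derivative. First I would localize: by the interior sphere condition there is an open ball $B_R(y)\subset\Omega$ with $x_0\in\partial B_R(y)$, and since the hypotheses and the conclusion concern only the behaviour of $w$ near $x_0$, it suffices to compare functions on the annulus $A:=\{x:\,R/2<|x-y|<R\}$, whose closure lies in $\overline{B_R(y)}\subset\overline\Omega$. Next I would record a strict positivity statement: putting $u:=w-w(x_0)$ we have $-\Delta u\ge 0$ and $u\ge 0$ in $\Omega$, so if $u$ vanished at an interior point the strong maximum principle would force $u\equiv 0$, i.e.\ $w$ constant, contrary to hypothesis; hence $u>0$ in $\Omega$, and in particular $u\ge m$ on the compact inner sphere $\{|x-y|=R/2\}$ for some $m>0$.

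The key step is the construction of a subsolution barrier on $A$. For $\alpha>0$ I would set
$$
v(x):=e^{-\alpha|x-y|^2}-e^{-\alpha R^2},
$$
so that $v>0$ on $A$, $v=0$ on $\{|x-y|=R\}$, and a direct computation yields $\Delta v(x)=2\alpha e^{-\alpha|x-y|^2}\bigl(2\alpha|x-y|^2-N\bigr)$, which is strictly positive on $A$ as soon as $\alpha>2N/R^2$. Fixing such an $\alpha$ and then choosing $\varepsilon>0$ so small that $\varepsilon v\le m$ on $\{|x-y|=R/2\}$, the function $z:=u-\varepsilon v$ satisfies $-\Delta z=-\Delta u+\varepsilon\Delta v\ge 0$ in $A$ and $z\ge 0$ on $\partial A$ (on the inner sphere because $u\ge m\ge\varepsilon v$, on the outer sphere because $u\ge 0$ and $v=0$ there). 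The weak maximum principle then gives $z\ge 0$ on $\overline A$, i.e.\ $u\ge\varepsilon v$ on $\overline A$.

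To finish I would differentiate at $x_0$. Since $z(x_0)=u(x_0)-\varepsilon v(x_0)=0$ and $z\ge 0$ on $\overline A$, the function $z$ attains its minimum over $\overline A$ at $x_0$; hence for any unit vector $\eta$ that is outward at $x_0$ (that is, $x_0-t\eta\in A$ for small $t>0$, equivalently $(x_0-y)\cdot\eta>0$) for which the outward directional derivative exists, one has $\frac{\partial z}{\partial\eta}(x_0)\le 0$, and therefore
$$
\frac{\partial w}{\partial\eta}(x_0)\ \le\ \varepsilon\,\frac{\partial v}{\partial\eta}(x_0)\ =\ \varepsilon\,\nabla v(x_0)\cdot\eta\ =\ -2\varepsilon\alpha\,e^{-\alpha R^2}\,(x_0-y)\cdot\eta\ <\ 0,
$$
which is the claimed inequality.

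I do not expect any genuine obstacle here: this is a textbook statement and the argument is routine. The only points that need attention are the order in which the constants are picked ($\alpha$ large first, then $\varepsilon$ small depending on $\alpha$ and $m$), and the bookkeeping around ``any outward directional derivative'': one only needs $(x_0-y)\cdot\eta>0$, which simultaneously guarantees that the difference quotient defining $\frac{\partial z}{\partial\eta}(x_0)$ is formed along points of $A$ and that the barrier's derivative $\nabla v(x_0)\cdot\eta$ has the correct negative sign.
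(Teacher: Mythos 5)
Your argument is correct: it is the classical barrier (Hopf) proof, with the annulus $A$, the exponential comparison function $e^{-\alpha|x-y|^2}-e^{-\alpha R^2}$, and the constants chosen in the right order, and the interpretation of ``outward direction'' via $(x_0-y)\cdot\eta>0$ is exactly what is needed both for the sign of $\nabla v(x_0)\cdot\eta$ and for the one-sided difference quotient to be taken along points of $A$. The paper itself gives no proof of this lemma, quoting it as the classical Hopf boundary lemma from Gilbarg--Trudinger, and your proof is precisely that standard argument, so nothing further is needed.
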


The following Proposition is a variant of the Hopf boundary lemma for a half-ball which also yields information on a 
`tangential' derivative at the corner points. 

\begin{Lem}
\label{sec:appendix-1}
Let $B_+:= \{x \in \R^N \::\:|x| \le 1,\:x_N> 0\}$. Suppose that 
$w \in C^2(\overline{B_+})$ satisfies 
$$
-\Delta w \ge 0 \quad \text{on $B_+$}, \qquad w = 0\quad \text{on $\Sigma_1$},\qquad 
\frac{\partial w}{\partial \nu}=0 \quad \text{on $\Sigma_2$,}   $$
where $\Sigma_1= \{x \in \partial B_+:\: x_N=0\}$, $\Sigma_2= 
\{x \in \partial B_+:\:x_N>0\}$, and $\nu$ is the outward normal on $\Sigma_2$. If $w \not \equiv 0$ in $B_+$, then 
$$
w >0\quad  \text{in }B_+\qquad \text{and}\qquad \frac{\partial w}{\partial x_N} >0 \quad \text{on }\Sigma_1.
$$
\end{Lem}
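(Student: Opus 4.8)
The plan is to prove this half-ball Hopf-type lemma in two stages: first establish $w > 0$ in $B_+$ via the strong maximum principle combined with a reflection trick across $\Sigma_2$, and then obtain the strict sign of $\frac{\partial w}{\partial x_N}$ on the flat part $\Sigma_1$ by a careful application of the classical Hopf lemma (Lemma~\ref{sec:appendix-2}), where the only delicate point is the behavior at the corner set $\overline{\Sigma_1} \cap \overline{\Sigma_2}$.

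First I would show $w > 0$ in $B_+$. Since $-\Delta w \ge 0$ and $w = 0$ on $\Sigma_1$, the weak maximum principle gives $w \ge 0$ in $B_+$. If $w$ vanished at some interior point of $B_+$, the strong maximum principle would force $w \equiv 0$, contradicting $w \not\equiv 0$. The subtle case is a zero of $w$ lying on the curved part $\Sigma_2$: here I would use the homogeneous Neumann condition $\frac{\partial w}{\partial \nu} = 0$ on $\Sigma_2$ to reflect. One clean way is to invoke the strong maximum principle up to the boundary for the Neumann problem: a supersolution of $-\Delta w \ge 0$ satisfying $\frac{\partial w}{\partial \nu}=0$ on a relatively open boundary portion and attaining its minimum there must be constant near that portion (this follows from the Hopf lemma, since a strict interior sphere can be placed at such a point and a nonconstant $w$ would force $\frac{\partial w}{\partial \nu}(x_0) < 0$). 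Hence $w > 0$ throughout $B_+$ including on $\Sigma_2$.

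Next, for the conclusion on $\Sigma_1$, fix $x_0 \in \Sigma_1$. If $x_0$ is an interior point of $\Sigma_1$ (that is, $x_0 \notin \overline{\Sigma_2}$), then $B_+$ satisfies the interior sphere condition at $x_0$, $w > 0 = w(x_0)$ in $B_+$, $w$ is nonconstant, and the outward normal at $x_0$ is $-e_N$; Lemma~\ref{sec:appendix-2} gives $\frac{\partial w}{\partial (-e_N)}(x_0) < 0$, i.e.\ $\frac{\partial w}{\partial x_N}(x_0) > 0$. The main obstacle is the corner points $x_0 \in \overline{\Sigma_1} \cap \overline{\Sigma_2}$, where no interior sphere touches $\Sigma_1$ at $x_0$ and the standard Hopf lemma does not directly apply. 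I would handle these by a barrier/comparison argument tailored to the quarter-space geometry: in a small half-ball neighborhood of $x_0$, construct an explicit subsolution $\underline w$ of $-\Delta \underline w \le 0$ that vanishes on $\Sigma_1$, has nonpositive normal derivative on $\Sigma_2$, is dominated by $w$ on the remaining part of the boundary of the neighborhood (using $w > 0$ there, already established), and has a strictly positive derivative in the $x_N$-direction at $x_0$. A convenient choice exploits the flatness of both $\Sigma_1$ and $\Sigma_2$ near their intersection after a rotation so that $\Sigma_2$ is also locally a hyperplane; then one can build $\underline w$ from harmonic polynomials or products like $x_N \cdot (\text{a function positive near } x_0)$, or reflect $w$ evenly across $\Sigma_2$ to reduce to a genuine interior-sphere situation on a full half-ball whose flat face contains a neighborhood of $x_0$ in the reflected $\Sigma_1$. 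The comparison principle then yields $w \ge \underline w$ near $x_0$, and differentiating at $x_0$ gives $\frac{\partial w}{\partial x_N}(x_0) \ge \frac{\partial \underline w}{\partial x_N}(x_0) > 0$.

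The hard part, then, is the corner analysis: producing a barrier that simultaneously respects the Dirichlet condition on $\Sigma_1$, the Neumann condition on $\Sigma_2$, and has the right first-order behavior at the corner. The even-reflection approach across $\Sigma_2$ seems the most robust, since $\Sigma_2$ is a piece of a sphere and reflecting in the sphere (an inversion-type or geodesic reflection) turns the Neumann condition into a transmission/matching condition across $\Sigma_2$ and extends $w$ to a neighborhood in which $\Sigma_1$ becomes an ordinary piece of boundary with a legitimate interior sphere at the (former) corner point; after the reflection one simply applies Lemma~\ref{sec:appendix-2} again. Care is needed to check that the reflected equation still has a right-hand side with the correct sign (the reflection of a superharmonic function across a sphere is superharmonic for the Kelvin-type transform only up to lower-order terms, so one may instead reflect across the tangent hyperplane to $\Sigma_2$ at the corner and absorb the resulting lower-order perturbation into a first-order operator with bounded coefficients, for which Hopf's lemma still holds).
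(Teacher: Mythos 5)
Your first stage ($w>0$ in $B_+$ via the strong maximum principle in the interior and Lemma~\ref{sec:appendix-2} on $\Sigma_2$) and your treatment of interior points of $\Sigma_1$ coincide with the paper's proof. The genuine gap is in the corner analysis, which is the actual content of the lemma. In your barrier version you require $\underline w$ to vanish on $\Sigma_1$ and to be dominated by $w$ on the rest of the boundary of a small half-ball neighborhood of the corner, ``using $w>0$ there, already established''. But positivity of $w$ is only known for $x_N>0$, while the outer boundary of any such neighborhood reaches down to $\{x_N=0\}$ and, in particular, contains other points of the corner sphere $\{|x|=1,\ x_N=0\}$; near those points a bound $w\ge \eps\,\underline w$ amounts to a lower bound $w\ge c\,x_N$, i.e.\ exactly the estimate being proved, so no admissible $\eps>0$ can be extracted from $w>0$ alone. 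The paper avoids this by comparing only on $B_\delta(e_1)\cap B_1(\bar x)\cap B_+$ with $\bar x=e_1+e_N$: the ball $B_1(\bar x)$ touches $\{x_N=0\}$ only at the corner $e_1$, the barrier $z=\phi+\psi$ with $\psi(x)=\phi(x/|x|^2)$ vanishes on $\partial B_1(\bar x)$ (so there only $w\ge 0$ is needed), has exactly vanishing normal derivative on the unit sphere by inversion symmetry, and is strictly subharmonic near $e_1$; on the remaining comparison piece one has $x_N\ge \delta^2/2$, so $w$ has a positive minimum and $\eps$ exists. Some such geometric device isolating the corner from the rest of $\Sigma_1$ is indispensable, and it is missing from your sketch.

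Your reflection alternative also does not close the gap as stated. Reflection across the tangent hyperplane to $\Sigma_2$ at the corner does not extend $w$ past $\Sigma_2$: that reflection maps $B_+$ onto a cap meeting $B_+$ only at the corner point (e.g.\ $x=(1,0,\dots,0,s)$ is fixed by it and lies outside $\overline B$), so the ``extended'' function is simply undefined where you need it. The inversion $x\mapsto x/|x|^2$ does give an extension to a relative neighborhood of the corner in $\{x_N>0\}$, and the transformed inequality indeed only picks up a drift term; but the extended function is merely $C^1$ across $\Sigma_2$, so one cannot ``simply apply Lemma~\ref{sec:appendix-2}'', which is stated for $C^2$ functions and for $-\Delta$ without lower-order terms; a Hopf lemma for supersolutions of an operator with drift and with this reduced regularity must be supplied. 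These steps are repairable, but as written the proposal defers precisely the part of the argument that Lemma~\ref{sec:appendix-1} was introduced to settle, and its two concrete fallback suggestions fail (the $\eps$-domination) or are geometrically unsound (the tangent-plane reflection).
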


\begin{proof}
In the following, we write $B_r(y)$ for the closed ball of radius $r$ centered at $y \in \R^N$.
Since $w \not \equiv 0$, the maximum principle implies that $w$ cannot achieve its minimum in 
$\innt(B_+)$. Moreover,
by Lemma~\ref{sec:appendix-2} and the boundary condition 
$\frac{\partial w}{\partial \nu}=0$ on $\Sigma_2$, 
$w$ cannot achieve its minimum on $\Sigma_2$. Hence $w>0$ in $B_+$, and Lemma~\ref{sec:appendix-2} yields 
$\frac{\partial w}{\partial x_N} >0$ for 
$x \in \Sigma_1,\: |x| <1$, since at these boundary points the interior sphere condition is satisfied. It remains to prove $\frac{\partial w}{\partial x_N} >0$ for $x \in \Sigma_1$ with $|x|=1$. Without loss of generality, we only consider $x=e_1=(1,0,\dots,0)$. 
Let $\bar x = (1,0,\dots,0,1)$. We consider the functions $\phi,\psi: \R^N \setminus \{0\} \to \R$ defined by
$$
\phi(x)= \exp \bigl(-\alpha |x-\bar x|^2\bigr)-e^{-\alpha},\qquad 
\psi(x)= \phi\left(\textstyle{\frac{x}{|x|^2}}\right),
$$
where $\alpha>0$ will be fixed later. Then $\phi \equiv 0$ on $\partial B_1(\bar x)$. Moreover, 
$\partial B_1(\bar x)$ is invariant under the reflection 
$x \mapsto \frac{x}{|x|^2}$. Indeed, 
$|x-\bar x|^2=1$ implies
\begin{align*}
\Bigl |\frac{x}{|x|^2} -\bar x \Bigr|^2 &= \frac{1}{|x|^2}-\frac{2x\cdot\bar{x}}{|x|^2}+2=
\frac{|x|^2-2x\cdot\bar{x}+|\bar{x}|^2-1}{|x|^2}+1=\frac{|x-\bar x|^2-1}{|x|^{2}}+1=1.
\end{align*}
As a consequence, $\psi$ also vanishes on 
$\partial B_1(\bar x)$. We note that
$$\frac{\partial\phi}{\partial x_N}(e_1)=\left.\frac{d\phi}{ds}(\bar{x}+se_N)\right|_{s=-1}
=\left.\frac{d}{ds}\bigl(e^{-\alpha s^2}-e^{-\alpha}\bigr)\right|_{s=-1}=
2\alpha e^{-\alpha}>0
$$
and
\begin{align}
\label{eq:34}
\frac{\partial\psi}{\partial x_N}(e_1)&=\left.\frac{d}{d\theta}\psi(\cos\theta,0,\ldots,0,\sin\theta)\right|_{\theta=0}
=\left.\frac{d}{d\theta}\phi(\cos\theta,0,\ldots,0,\sin\theta)\right|_{\theta=0}\nonumber\\
&=\frac{\partial\phi}{\partial x_N}(e_1)>0.
\end{align}
Next we compute $\Delta \phi(e_1)$. For this we 
use spherical coordinates $(r,\eta)$ with center at $\bar{x}$:
$r=|x-\bar{x}| \in (0,\infty)$ and 
$\eta=\frac{x-\bar{x}}{|x-\bar{x}|} \in \partial B_1(0)$.
Let $\bar{\phi}(r,\eta):=\phi(x)$ and $\bar{\psi}(r,\eta):=\psi(x)$.
The Laplacian of a function $u(x)=\bar{u}(r,\eta)$ is given by 
\begin{equation}\label{eq:100}
\Delta u= \frac{\partial^2\bar{u}}{\partial r^2}+\frac{N-1}{r}\frac{\partial\bar{u}}{\partial r} +\Delta_\eta\bar{u},
\end{equation}
where $\Delta_\eta$ stands for Laplace-Beltrami operator on the 
sphere $\{x\in\Rb^N: |x-\bar{x}|=1\}$. 
Since $\phi$ vanishes on this sphere,
\begin{align*}\Delta\phi(e_1)&=\left[
\frac{\partial^2\bar{\phi}}{\partial r^2}+\frac{N-1}{r}\frac{\partial\bar{\phi}}{\partial r}\right]
(1,-e_N)\\
&=\left.\left[\frac{d^2}{dr^2}\bigl(e^{-\alpha r^2}-e^{-\alpha}\bigr)+
\frac{N-1}{r}
\frac{d}{dr}\bigl(e^{-\alpha r^2}-e^{-\alpha}\bigr)\right]\right|_{r=1}\\
&=
2\alpha e^{-\alpha}( 2 \alpha-N)>0,
\end{align*}
for $\alpha>\frac{N}{2}$.
In order to compute $\Delta\psi(e_1)$, 
we observe that by (\ref{eq:34}),  
$$\frac{\partial\bar{\psi}}{\partial r}(1,-e_N)=-
\frac{\partial\psi}{\partial x_N}(e_1)=-
\frac{\partial\phi}{\partial x_N}(e_1)=
\frac{\partial\bar{\phi}}{\partial r}(1,-e_N).$$
As, by a short calculation,
\begin{align*}
\frac{\partial^2\bar{\psi}}{\partial r^2}(1,-e_N)&=\left.\frac{d^2}{d\theta^2}\psi(\cos\theta,0,\ldots,0,\sin\theta)\right|_{\theta=0}
=\left.\frac{d^2}{d\theta^2}\phi(\cos\theta,0,\ldots,0,\sin\theta)\right|_{\theta=0}\\ &=
\frac{\partial^2\bar{\phi}}{\partial r^2}(1,-e_N)
\end{align*}
and $\psi$ also vanishes
on the sphere $\{x\in\Rb^N: |x-\bar{x}|=1\}$, it follows from (\ref{eq:100}) that
$$ 
\Delta \psi(e_1)= \Delta \phi(e_1).
$$
Now put $z= \phi +\psi$. Then, by construction,
$\frac{\partial z}{\partial \nu}= 0$ on $\partial B_1(0)$. 
Let $\delta>0$ be such that  
$$
\Delta z >0 \qquad \text{on }B_\delta(e_1).
$$
Since $w>0$ on $B_+$, there is $\eps>0$ such that 
$w \ge \eps z$ on $\partial B_\delta(e_1) \cap B_1(\bar x) \cap B_+$. We now consider the set $D:= B_\delta(e_1) \cap B_1(\bar x) \cap B_+$. 
The function $\tilde{w}:\overline{B_+}\setminus\{0\}\to\Rb$ defined by
$\tilde w := w- \eps z$ satisfies
$$
\begin{aligned}
\Delta \tilde w &< 0 &&\qquad \text{in }D,\\
\tilde w &\ge  0 &&\qquad \text{on }[\partial B_\delta(e_1) \cap B_1(\bar x) \cap B_+] \cup [B_\delta(e_1) \cap \partial B_1(\bar x) \cap B_+],\\
\frac{\partial \tilde w}{\partial \nu}&= 0&&\qquad \text{on }B_\delta(e_1) \cap B_1(\bar x) \cap \partial B_+.
\end{aligned}
$$
By similar arguments as above, $\tilde w$ can neither 
achieve 
its minimum on $\innt(D)$ nor on $\Sigma_2$. Since  
$\tilde w \ge 0$ on the remaining parts of $\partial D$, we conclude $\tilde w \ge 0$ on $D$. Since $\tilde w(e_1)= 0$, this implies 
$\frac{\partial \tilde w}{\partial x_N} (e_1) \ge 0$. Hence 
$$   
\frac{\partial w}{\partial x_N}(e_1) \ge \eps \frac{\partial z}{\partial x_N}(e_1) >0
$$
by (\ref{eq:34}), as claimed.
\end{proof}
\begin{Rem}
\label{sec:appendix-3}
An analogue of\/ {\rm Lemma~\ref{sec:appendix-1}} holds for a half annulus.
\end{Rem}

\end{document}